\documentclass[12pt]{amsart}
\usepackage{amsfonts,amsmath,amssymb,amsthm,mathtools}
\usepackage{hyperref}
\usepackage{cleveref}
\usepackage{tikz-cd}
\usepackage[top=1in, bottom=1in, left=1.1in, right=1.1in]{geometry}

\newtheorem{theorem}{Theorem}[section]
\newtheorem{lemma}[theorem]{Lemma}
\newtheorem{proposition}[theorem]{Proposition}
\newtheorem{corollary}[theorem]{Corollary}

\theoremstyle{definition}
\newtheorem{definition}[theorem]{Definition}
\newtheorem{question}[theorem]{Question}
\newtheorem{example}[theorem]{Example}
\newtheorem{settings}[theorem]{Settings}

\theoremstyle{remark}
\newtheorem{remark}[theorem]{Remark}

\newcommand{\Supp}{\textup{Supp}}
\newcommand{\Spec}{\textup{Spec}}

\newcommand{\Min}{\textup{Min}}
\newcommand{\Frac}{\textup{Frac}}
\newcommand{\car}{\textup{char}}
\newcommand{\rank}{\textup{rank}}
\newcommand{\Tor}{\textup{Tor}}
\newcommand{\ann}{\textup{ann}}
\newcommand{\Hom}{\textup{Hom}}

\newcommand{\Z}{\mathbb{Z}}
\newcommand{\Q}{\mathbb{Q}}

\begin{document}
\title{Finiteness and infiniteness of gradings of Noetherian rings}
\author{Cheng Meng}
\address{Yau Mathematical Sciences Center, Tsinghua University, Beijing 100084, China.}
\email{cheng319000@tsinghua.edu.cn}
\date{\today}
\begin{abstract}
In this paper we show that for a torsion-free abelian group $G$, $\operatorname{rank}_\mathbb{Z}G<\infty$ if and only if there exists a Noetherian $G$-graded ring $R$ such that the set $\{R_g \neq 0\}$ generates the group $G$.  
For every $G$ of finite rank, we construct a $G$-graded ring $R$ such that $R_g \neq 0$ for all $g \in G$. We prove such rings give examples of PIDs which are not ED. We also use the relations between the graded division ring and the group cohomology to prove some vanishing and nonvanishing results for second group cohomology. Finally, we prove that the Hilbert series of a finitely generated $G$-graded $R$-module is well-defined when $R_0$ is Artinian, and this Hilbert series times some Laurent polynomial is equal to a Laurent polynomial. 
\end{abstract}

\maketitle
\section{Introduction}
Throughout this paper, all rings are commutative with an identity unless otherwise stated.

Let $R$ be a commutative Noetherian ring and $(G,+)$ an abelian group. We say $R$ is $G$-graded if there is a family of additive subgroups $R_g$ such that $R=\oplus_{g \in G}R_g$ and $R_gR_h \subset R_{g+h}$ for any $g,h \in G$. For a $G$-graded ring $R$, an $R$-module $M$ is $G$-graded if there is a family of additive subgroups $M_g$ such that $M=\oplus_{g \in G}M_g$ and $R_gM_h \subset M_{g+h}$ for any $g,h \in G$. We define $\Supp(R)=\{g \in G: R_g \neq 0\}$ and $\Z\Supp(R)$ to be the subgroup of $G$ generated by $\Supp(R)$. Any $G$-graded ring $R$ can be viewed as a $\Z\Supp(R)$-graded ring by ignoring other degrees.

In algebraic geometry, the coordinate rings of algebraic sets in projective spaces are $\mathbb{Z}$-graded rings, or more precisely, $\mathbb{N}$-graded rings. This is a particular kind of graded rings that usually attract one's attention. The irreducible decomposition of an algebraic set corresponds to the graded primary decomposition of $0$ ideal in its coordinate ring. In \cite{MR1727221BourbakiCommutative}, it has been proved that the primary decomposition in the graded sense is still a primary decomposition in the nongraded sense. However, we can turn to a finer decomposition which is the irreducible decomposition, and the equivalence of irreducibility and graded irreducibility has not been explored until it is proved in \cite{MR3582834Zgradedirreducible}. More generally, the author proved that when $G$ is torsion-free abelian, then $G$-graded irreducibility is equivalent to irreducibility and being $G$-graded in \cite{MR4068915Ggradedirreducible}.

In \cite{MR4068915Ggradedirreducible}, the author raises the following question:
\begin{question}\label{1.1}
Let $R$ be a Noetherian ring, $G$ be a torsion-free abelian group. Suppose $R$ is $G$-graded with $\Z\Supp(R)=G$. Does this imply that $G$ is finitely generated?    
\end{question}
If the answer to the question is yes, then in the Noetherian setting, if $R$ is $G$-graded and $G$ is torsion-free, we may always assume $G=\mathbb{Z}^n$. The finite generation property of $G$ is a strong condition. It may lead to the simplification of the proof of properties of $G$-graded rings because in this case we can prove this by induction on the number of basis elements of $G$. Also in this case, $R$ is a finitely generated $R_0$-algebra by \cite{MR706507Z[1/P]/Zgraded}, so many good properties like excellence and essential of finite type over another ring pass from $R_0$ to $R$.

In \cite{MR706507Z[1/P]/Zgraded}, Goto and Yamagishi found a ring graded over the group $G=\mathbb{Z}[1/p]/\mathbb{Z}$ which is a field. Thus if we drop the torsion-free assumption, there is a counterexample to \Cref{1.1}.
\begin{example}
Let $k$ be a field of characteristic $p>0$ and $T$ be an indeterminate. Then $k(T)$ is a purely transcendental extension of $k$. Set $K=k(T^{1/p^\infty})=\cup_n k(T^{1/p^n})$ as a subfield of an algebraic closure of $k(T)$. Then $K$ is $G=\mathbb{Z}[1/p]/\mathbb{Z}$-graded with $\deg(T^{a/p^n})=a/p^n+\mathbb{Z}$.     
\end{example}
Here $K$ is a field, so in particular, it is a Noetherian ring. It is $G$-graded with $K_g \neq 0$ for any $g \in G$. More generally, we can replace $T$ with indeterminates of any cardinality to assume $G$ has any infinite cardinality.

In this paper, we will seek for a counterexample where $G$ is torsion-free. Here is the first main result of this paper.
\begin{theorem}\label{1.3 main theorem}
Let $G$ be a torsion-free abelian group.
\begin{enumerate}
\item (See \Cref{2.7}) If $R$ is a Noetherian $G$-graded ring, then $\rank_\mathbb{Z} \Z\Supp(R)<\infty$.
\item (See \Cref{4.8}) If $\rank_\mathbb{Z} G<\infty$, then there is a Noetherian $G$-graded ring $R$ with $\Supp(R)=G$. In particular, $\Z\Supp(R)=G$.
\end{enumerate}
\end{theorem}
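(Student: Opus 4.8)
The plan is to treat the two parts separately but through a common lens: the interaction between Noetherianity and the ``graded field'' (homogeneous localization) of a graded domain, together with the group cohomology / crossed-product structure of such graded fields. Throughout I write $\Gamma=\Z\Supp(-)$, and I use freely that, since $G$ is torsion-free, every minimal prime of a $G$-graded ring is homogeneous, so the standard reductions preserve the grading.

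For part (1) I would first reduce to the case of a graded domain. Since $R$ is Noetherian its nilradical $N$ is homogeneous and nilpotent, say $N^t=0$, and $R$ has finitely many homogeneous minimal primes $P_1,\dots,P_m$ with $N=\bigcap_iP_i$. Each quotient $N^k/N^{k+1}$ is a finitely generated graded $R/N$-module, so its support lies in finitely many translates of $\Supp(R/N)\subseteq\bigcup_i\Supp(R/P_i)$; letting $0\le k<t$ shows $\Supp(R)$ lies in finitely many translates of $\bigcup_i\Supp(R/P_i)$, whence $\rank_\Z\Z\Supp(R)$ is finite once each $\rank_\Z\Z\Supp(R/P_i)$ is. Thus it suffices to bound the rank for a Noetherian graded \emph{domain} $D$. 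Here I invert all nonzero homogeneous elements; the homogeneous localization $\mathcal F$ is again Noetherian (a localization of a Noetherian ring) and is a $\Gamma$-graded field, i.e.\ a commutative crossed product $\mathcal F\cong L\ast_c\Gamma$ with $L=\mathcal F_0$ a field and $c$ a symmetric class in $H^2(\Gamma,L^\times)$.

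The heart of part (1) is then to show $\Gamma$ has finite rank, and the key subtlety is that I must \emph{not} try to prove $\Gamma$ is finitely generated: a nontrivial cocycle $c$ can make $L\ast_c\Gamma$ Noetherian even when $\Gamma$ is not finitely generated (this is exactly the phenomenon behind the field $k(T^{1/p^\infty})$ of the introductory example, and behind part (2)), so the usual ``Noetherian group ring $\Rightarrow$ ACC on subgroups'' argument is unavailable. Instead I would argue with the rank directly: if $\rank_\Z\Gamma=\infty$, choose a free subgroup $F=\bigoplus_{i\ge1}\Z e_i\subseteq\Gamma$ of infinite rank. Since $F$ is free, $\operatorname{Ext}^1_\Z(F,L^\times)=0$, so $c|_F$ is a coboundary and, after rescaling the units $u_{e_i}$, the graded subring $L\ast_cF$ becomes an honest Laurent ring $L[u_{e_i}^{\pm1}:i\ge1]$, in which the chain $(u_{e_1}-1)\subseteq(u_{e_1}-1,u_{e_2}-1)\subseteq\cdots$ does not stabilize. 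Transporting such a chain to $\mathcal F$ is the crux: a non-Noetherian subring need not force the overring to be non-Noetherian, so one must either arrange $F$ to be a direct summand of $\Gamma$ (so that the Laurent ring is a retract, hence a quotient, of $\mathcal F$, and non-Noetherianity transfers) or replace the chain argument by a dimension-theoretic estimate. I expect this transfer to be the main obstacle of part (1).

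For part (2), given $G$ of finite rank $r$ I would fix an embedding $G\hookrightarrow G\otimes_\Z\Q\cong\Q^r$ and a free subgroup $\Z^r\subseteq G$ with torsion quotient $Q=G/\Z^r$; the introductory example shows how a torsion group is realized by a graded field through a nontrivial class in $\operatorname{Ext}^1_\Z(Q,L^\times)$ (as with $u^{p^n}=S^a$ over $L=k(S)$). The construction of $R$ then splices the free part, realized by genuine Laurent variables giving all of $\Z^r$, with such a cohomological realization of $Q$, choosing $c\in H^2(G,L^\times)$ so that it induces the extension $0\to\Z^r\to G\to Q\to0$; this forces the grading to be by $G$ itself rather than by the product $\Z^r\times Q$, and arranges $\Supp(R)=G$. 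The decisive difficulty — and what the cohomological input is for — is to verify that the resulting $R$ is \emph{Noetherian} despite having full, hence infinite, support and a non-finitely-generated grading group: the nontriviality of $c$ is precisely what lets $R$ evade the non-Noetherian behaviour of the untwisted group algebra $L[G]$. I expect to establish Noetherianity by exhibiting $R$ in the rank-one case as a graded PID — which simultaneously yields the promised PID-not-ED examples — and then to handle general finite rank by an inductive splicing argument rather than a naive tensor product, since tensor products of such rings need not remain Noetherian.
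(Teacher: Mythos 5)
Your overall skeleton for part (1) --- reduce to a Noetherian graded domain, pass to the homogeneous fraction field $\mathcal F$, restrict to an infinite-rank free subgroup $F=\bigoplus_{i\ge 1}\Z e_i$ of $\Gamma=\Supp(\mathcal F)$, and observe that commutativity forces $\mathcal F_F$ to be a Laurent polynomial ring over $L=\mathcal F_0$ in infinitely many variables, hence non-Noetherian --- is sound, and would in fact yield a proof shorter than the paper's. But you explicitly leave open the one step that makes it a proof: transferring non-Noetherianity from the graded subring $\mathcal F_F$ up to $\mathcal F$. Worse, the repair you suggest (arrange $F$ to be a direct summand of $\Gamma$) cannot work: if $\Gamma$ is, say, an infinite-dimensional $\Q$-vector space, it is divisible and so has no nonzero free direct summand; and even a group-theoretic splitting $\Gamma=F\oplus H$ would not give a ring retraction of $\mathcal F$ onto $\mathcal F_F$. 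The missing idea is that no group-theoretic splitting is needed: for an \emph{arbitrary} subgroup $G'$ of the grading group, the coset decomposition $\mathcal F=\bigoplus_{h\in\Gamma/G'}\bigl(\bigoplus_{g\in h+G'}\mathcal F_g\bigr)$ exhibits $\mathcal F_{G'}$ as an $\mathcal F_{G'}$-module direct summand of $\mathcal F$; a split inclusion is cyclic pure ($I\mathcal F\cap\mathcal F_{G'}=I$ for every ideal $I$ of $\mathcal F_{G'}$), and Noetherianity descends along cyclic pure maps by contracting an ascending chain. These are exactly Lemmas \ref{3.7cyclicpurelemma1}(1) and \ref{3.8cyclicpurelemma2} of the paper; with them your argument closes immediately. (The paper's own proof of \Cref{2.7} takes a different route: it uses Noetherianity to write $y_{n+1}\in(y_1,\dots,y_n)$ for $y_i=x_{2i-1}+x_{2i}$, applies a regrading projection to conclude $(y_1,\dots,y_n)=\mathcal F$, and then invokes cyclic purity only over a finite-rank Laurent subring to reach a contradiction.)

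For part (2) you do not actually construct anything, and the two concrete suggestions you make point the wrong way. First, a class $c\in H^2_S(G,L^*)$ classifies an extension of $G$ by $L^*$ (equivalently, a graded-field structure); it does not ``induce'' the extension $0\to\Z^r\to G\to Q\to 0$, so that requirement is not meaningful as stated. Second, and decisively, you give no mechanism forcing Noetherianity, which is the entire difficulty: over any root-closed field every $G$-graded field is the group algebra $k[G]$ (\Cref{5.18 root-closed vanishing}), which is non-Noetherian for infinitely generated $G$ (\Cref{5.8}), so both the base field and the cocycle must be produced with care. The paper's mechanism (\Cref{4.1 main example}) is to build $R(k,d)$ as a direct limit of Laurent rings $K[t_{j,i}^{\pm 1}]$ along $t_{j,i}\mapsto \alpha_{j,i+1}^{-1}t_{j,i+1}^{p_i}$, where the base field $K=k(\alpha_{j,i})$ contains infinitely many fresh indeterminates, one for each root adjunction; Noetherianity is then proved via Cohen's theorem by showing every prime of $R$ is extended from a finite stage, the key point being that $T^{p_i}-\alpha t$ has no root over the relevant field because $\alpha$ is transcendental (a degree count in $\alpha$). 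Nothing in your sketch plays the role of these fresh transcendentals. Your proposed shortcut --- first show the rank-one ring is a graded PID --- is circular: the paper's proof that $R(k,1)$ is a PID (\Cref{4.10}) already relies on every prime being extended from a finite stage, i.e.\ on the same analysis that proves Noetherianity. Finally, a general $G$ of finite rank is handled not by ``splicing'' but by embedding $G\subset\Q^d$ and taking the graded subring $R(k,d)_G$, whose Noetherianity again follows from the split/cyclic-pure descent of Lemmas \ref{3.7cyclicpurelemma1}(1) and \ref{3.8cyclicpurelemma2}.
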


We give a basic example in \Cref{4.1 main example} of a $\mathbb{Q}^d$-graded Noetherian ring supported on all of $\mathbb{Q}^d$. The construction is the direct limit of $\mathbb{Z}^d$-graded rings and we show Noetherianity by proving stability condition on prime ideals. Then we use this basic example to construct examples in other senarios, and explore some properties of these rings. For one application of such rings, we prove that when $d=1$, the construction yields a PID which is not ED.
\begin{theorem}[See \Cref{4.10}]
There is a $\mathbb{Q}$-graded Noetherian ring which is a PID, but not ED.    
\end{theorem}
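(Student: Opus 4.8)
The plan is to take $R$ to be the $d=1$ instance of the construction in \Cref{4.1 main example} (equivalently the $G=\mathbb{Q}$ case of \Cref{4.8}), so that $R$ is a $\mathbb{Q}$-graded Noetherian domain realized as a directed union $R=\varinjlim_n R_n$ of the Noetherian $\mathbb{Z}$-graded rings $R_n$ appearing in the construction, each of which is a PID. I would first establish that $R$ is a PID and then, as a separate argument, show that $R$ fails to be Euclidean.

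For the PID property the key observation is that a directed union of PIDs is a B\'ezout domain. Given finitely many elements $a_1,\dots,a_k\in R$, they all lie in some $R_n$; since $R_n$ is a PID, the ideal $(a_1,\dots,a_k)R_n=(d)R_n$ is principal, and then $(a_1,\dots,a_k)R=dR$, because $d$ lies in the extended ideal while each $a_i$ is already a multiple of $d$ inside $R_n$. Thus every finitely generated ideal of $R$ is principal. Combining this with the Noetherianity of $R$ established in \Cref{4.1 main example}, which guarantees that every ideal is finitely generated, I conclude that every ideal of $R$ is principal, i.e.\ $R$ is a PID. This half is routine once the union-of-PIDs description and the Noetherianity are in hand.

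For the failure of the Euclidean property I would use the method of universal side divisors. If $R$ were Euclidean and not a field, then a nonzero nonunit $u$ of minimal Euclidean value would be a \emph{universal side divisor}: $u$ is prime and every class of $R/uR$ is represented by $0$ or by a unit of $R$. Since $R$ is a PID, $uR$ is maximal, so $R/uR$ is a field, and the existence of such a $u$ forces the natural map $R^{\times}\to (R/uR)^{\times}$ to be surjective. Hence it suffices to prove that for \emph{every} nonzero prime $\pi$ of $R$ the map $R^{\times}\to (R/\pi R)^{\times}$ fails to be surjective. Here the grading does the work: in a $\mathbb{Q}$-graded domain every unit is homogeneous, so $R^{\times}$ is controlled by $R_0^{\times}$ together with the homogeneous units, whose degrees form a subgroup of $\mathbb{Q}$, while the residue field $R/\pi R$ is comparatively large. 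I would compute $R/\pi R$ and the image of $R^{\times}$ from the explicit description of $R$ and exhibit a class in $(R/\pi R)^{\times}$ lying outside the image of the units.

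The hard part is this last step: controlling simultaneously, for all primes $\pi$, both the unit group $R^{\times}$ and the residue fields $R/\pi R$ from the concrete construction, and producing the missing residue uniformly in $\pi$. I expect the divisibility of $\mathbb{Q}$ to be the decisive leverage: the presence of elements of every degree $q/n$ should force the residue fields to contain enough classes that cannot all be matched by the images of the structurally constrained (homogeneous) units, so that no prime can serve as a universal side divisor, and $R$ is therefore a PID that is not Euclidean.
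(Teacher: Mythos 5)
Your PID half is correct, and it takes a mildly different (arguably cleaner) route than the paper: you observe that a directed union of PIDs is B\'ezout and combine this with Noetherianity, whereas the paper invokes the criterion that a domain is a PID iff every prime ideal is principal and uses the fact, established in the proof of \Cref{4.1 main example}, that every prime of $R$ is extended from some $R_i$. Both arguments are sound; yours avoids re-using the prime-extension property.

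The non-ED half, however, has a genuine gap: you correctly set up the universal side divisor criterion (and your reduction to ``$R^{*}\to (R/\pi R)^{*}$ is never surjective'' is exactly the paper's), but the step you defer --- exhibiting, for every prime $\pi$, a residue class missed by the units --- is the entire content of the proof, and your proposed leverage points in the wrong direction. First, the statement is not proved for arbitrary $k$ of characteristic $0$: the paper's \Cref{4.10} assumes $k$ \emph{uncountable}, and this hypothesis is essential to its argument (for the existence statement one simply picks such a $k$). Second, the divisibility of $\mathbb{Q}$ does not help you here; it hurts. Since $R$ is a $\mathbb{Q}$-graded field, $R^{*}$ consists of \emph{all} nonzero homogeneous elements, so divisibility makes the unit group enormous, and the difficulty is precisely to show that this huge unit group still cannot surject onto $(R/fR)^{*}$. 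The paper's mechanism is: (i) reduce to a prime $f$ with coefficients in $k$ whose irreducible part has degree $\geq 2$ (if $f\cap R_{r+1}$ is linear times a unit, pass one level up, where it becomes a polynomial of degree $p_{r+1}\geq 2$ in $t_{r+1}$); (ii) regrade $R$ by $\pi:\mathbb{Q}\to\mathbb{Q}/\mathbb{Z}$, so that $f$ is homogeneous of degree $0+\mathbb{Z}$ and $(R/fR)_{0}=R_{1}/fR_{1}=:L$; then any unit $y\in R^{*}$ mapping onto a unit of $L$ must be homogeneous of integer degree, hence $y\in R_{1}^{*}=K^{*}\times t_{1}^{\mathbb{Z}}$; (iii) the image of $R_{1}^{*}$ in $L$ is $\bigcup_{i\in\mathbb{Z}}K^{*}\overline{t_{1}^{i}}$, a \emph{countable} union of $K$-lines inside the $K$-vector space $L$ of dimension $\geq 2$, which cannot exhaust $L$ because $K$ is uncountable. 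Step (ii) is the key idea your plan lacks: without it you have no control over which units can hit degree-zero residues, and without uncountability of $k$ the cardinality argument in step (iii) collapses. As written, your proposal is a program whose hard step remains open, and the heuristic you offer for closing it would not do so.
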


Next, we focus on $G$-graded fields $F$, that is, graded rings with only $0$ and unit homogeneous ideals. We relate the isomorphism classes of graded fields that are supported on $G$ and satisfy $F_0=k$ for a field $k$ with a subgroup $H^2_S(G,k^*)$ of the group cohomology $H^2(G,k^*)$. We realize this subgroup as an $S_2$-invariant subgroup induced by an $S_2$-action on $H^2(G,k^*)$. We use this relation to get some vanishing results on $H^2$ and $H^2_S$. Under this relation, we call the classes corresponding to Noetherian graded fields \textit{Noetherian classes}. Then \Cref{1.3 main theorem} gives that $H^2_S(G,k^*)$ contains some Noetherian class for some $k$. We explore the structure of the set of Noetherian classes. Here is the main result on its structure:
\begin{theorem}[See \Cref{6.13}]
Assume $G$ is infinitely generated and $H^2_S(G,k^*)$ contains a Noetherian classes. Then:
\begin{enumerate}
\item $H^2_S(G,k^*)/\Tor H^2_S(G,k^*)$ has $\mathbb{Q}$-vector space structure.
\item The set of Noetherian classes consists of cosets modulo $\Tor H^2_S(G,k^*)$.
\item The image of the set of Noetherian classes in $H^2_S(G,k^*)/\Tor H^2_S(G,k^*)$ consists of lines without the origin.
\end{enumerate}
\end{theorem}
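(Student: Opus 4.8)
The plan is to push everything through the correspondence set up earlier in this section: commutative $G$-graded fields $F$ with $F_0=k$ and $\Supp(F)=G$, taken up to degree-preserving isomorphism, are classified by $H^2_S(G,k^*)$, which I would first identify with $\operatorname{Ext}^1_{\mathbb{Z}}(G,k^*)$ (symmetric $2$-cocycles correspond exactly to abelian extensions of $G$ by $k^*$). Granting this identification, part (1) becomes pure homological algebra. Apply $\Hom(-,k^*)$ to the short exact sequence $0\to G\xrightarrow{\,n\,} G\to G/nG\to 0$, where injectivity of multiplication-by-$n$ uses that $G$ is torsion-free. Since $\mathbb{Z}$ has global dimension $1$ we have $\operatorname{Ext}^2_{\mathbb{Z}}(G/nG,k^*)=0$, so the cokernel of multiplication-by-$n$ on $\operatorname{Ext}^1_{\mathbb{Z}}(G,k^*)$ vanishes; hence this group is divisible. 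A divisible abelian group modulo its torsion subgroup is torsion-free and divisible, i.e.\ a $\mathbb{Q}$-vector space, which gives (1).

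The heart of the argument is a scaling operation on graded fields. Given $F$ of class $[c]$ and an integer $n\ge 1$, I would consider the subring $F^{(n)}:=\bigoplus_{h\in nG}F_h$, regraded through the isomorphism $G\xrightarrow{\sim}nG,\ g\mapsto ng$. This is again a graded field supported on all of $G$ with degree-zero part $k$, and a direct cocycle computation shows that its class is the pullback of $[c]$ along multiplication-by-$n$, namely $n[c]$. Since a Noetherian class exists, \Cref{1.3 main theorem} forces $\rank_{\mathbb{Z}}G<\infty$, and then $G/nG$ is finite; choosing a homogeneous unit in each coset of $nG$ exhibits $F$ as a free $F^{(n)}$-module of finite rank $|G/nG|$. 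By the Eakin--Nagata theorem, $F$ is Noetherian if and only if $F^{(n)}$ is, so $[c]$ is a Noetherian class if and only if $n[c]$ is.

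Parts (2) and (3) then follow formally. For (2), if $t$ is torsion with $mt=0$, then $m([c]+t)=m[c]$, so $[c]$ Noetherian $\iff m[c]$ Noetherian $\iff m([c]+t)$ Noetherian $\iff [c]+t$ Noetherian, using the scaling equivalence in both directions; hence Noetherianity is constant on cosets of $\Tor H^2_S(G,k^*)$, and the set $N$ of Noetherian classes is a union of such cosets. For (3), the scaling equivalence shows $N$ is invariant under multiplication by every nonzero integer, and passing to $V:=H^2_S(G,k^*)/\Tor H^2_S(G,k^*)$, which is a $\mathbb{Q}$-vector space by (1), upgrades this to invariance of the image $\bar N$ under all of $\mathbb{Q}^*$; thus $\bar N$ is a union of punctured lines $\mathbb{Q}^*\bar x$, and it is nonempty since $N$ is. It remains to exclude the origin, i.e.\ to show the trivial class is not Noetherian: this is precisely where the hypothesis that $G$ is infinitely generated enters, since the untwisted group algebra $k[G]$ is Noetherian exactly when the abelian group $G$ is finitely generated. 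The main obstacle I anticipate is the bookkeeping in the scaling step of the second paragraph --- verifying carefully that $F^{(n)}$ has class precisely $n[c]$ and that $F$ is genuinely module-finite (not merely integral) over it, so that Eakin--Nagata applies in both directions; the finiteness of $G/nG$ coming from finite rank is exactly what makes this go through.
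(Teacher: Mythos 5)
Your proposal is correct. For the heart of the theorem --- parts (2) and (3) --- your route coincides with the paper's: the scaling operation $F^{(n)}=F_{nG}$ regraded through $G\cong nG$ is exactly the paper's $F^n$ from Theorem 5.23(2), where it is shown to have class $n[c]$; the finiteness of $G/nG$ is Lemma 6.4 (and you correctly note that the finite-rank hypothesis must first be extracted from the existence of a Noetherian class via Theorem 2.7); the equivalence ``$[c]$ is Noetherian if and only if $n[c]$ is,'' stability under adding torsion classes, and exclusion of the origin via the fact that $k[G]$ is non-Noetherian for infinitely generated $G$ (Proposition 5.8) are precisely Theorem 6.5(3)--(5); and the passage to cosets and punctured lines is the same formal argument as in the paper's proof of Theorem 6.13(4). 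One minor difference: for Noetherian descent along $F^{(n)}\subset F$ you invoke Eakin--Nagata, while the paper observes that the extension splits ($F^{(n)}$ is a graded direct summand), hence is cyclically pure, and cyclic purity descends Noetherianity (Lemmas 3.7 and 3.8); your observation that $F$ is actually free of rank $|G/nG|$ over $F^{(n)}$ makes either argument work. The genuinely different route is part (1). You identify $H^2_S(G,k^*)$ with $\operatorname{Ext}^1_{\mathbb{Z}}(G,k^*)$ --- legitimate, since with trivial action and $G$ abelian every coboundary is symmetric, so $H^2_S$ is exactly the group of symmetric cocycle classes, i.e.\ of abelian extensions of $G$ by $k^*$ --- and deduce divisibility from the vanishing of $\operatorname{Ext}^2_{\mathbb{Z}}(G/nG,k^*)$, which needs only that $G$ is torsion-free. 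The paper instead proves divisibility constructively: a $G$-graded field extends to a $(1/r)G$-graded field by adjoining roots (Theorem 6.12, which rests on Lemma 6.4 and hence on finite rank), producing an honest $r$-th root $F'$ with $F'^r=F$ (Theorem 6.13(1)), after which the $\mathbb{Q}^*$-action on the quotient modulo torsion is defined by hand. Your homological argument is shorter and holds for arbitrary torsion-free abelian $G$ with no rank hypothesis; the paper's construction buys the stronger ring-theoretic statement that divisibility is realized by an actual graded field extension, not merely at the level of cohomology classes.
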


Finally, we turn to the Hilbert function of finitely generated graded modules over Noetherian $G$-graded rings, where $G$ has finite rank, but may be infinitely generated. The Hilbert function of a graded module $M$ makes sense only when $l_{R_0}(M_g)<\infty$ for any $g$; such module is called \textit{modest}, and we give a characterization of modest modules. When the module is modest, we prove, as in the classical setting, the Hilbert-function is summable, which is a notion for nonpositive gradings corresponding to rational series for positive gradings. The main result of this part is the following:
\begin{theorem}
Let $R$ be a $G$-graded ring and $M$ a $G$-graded module. Then:
\begin{enumerate}
\item (See \Cref{7.7}) $M$ is modest if and only if $(R/\ann M)_0$ is Artinian;
\item (See \Cref{7.10}) When $M$ is modest, the Hilbert function of $M$ is summable.
\end{enumerate}
\end{theorem}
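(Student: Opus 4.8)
The plan is to treat the two parts in sequence, reducing both to statements about the homogeneous pieces $M_g$ regarded as $R_0$-modules. For (1), the starting observation is that $\ann M$ is a graded ideal, so that $A:=(R/\ann M)_0=R_0/(\ann M\cap R_0)$ acts on each $M_g$ with the same submodule lattice as $R_0$; in particular $l_{R_0}(M_g)=l_A(M_g)$, and $A$ acts faithfully on $M$. The key lemma I would isolate is purely a consequence of Noetherianity: \emph{each $M_g$ is finitely generated over $R_0$}. This is the point where an infinitely generated $G$ must be handled — I cannot assume $R$ is module-finite or even algebra-finite over $R_0$, so the classical argument is unavailable, and I argue directly. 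If some $M_g$ were not finitely generated over $R_0$, I could pick $x_1,x_2,\dots\in M_g$ with $x_{n+1}\notin R_0x_1+\cdots+R_0x_n$ for all $n$; the ascending chain of $R$-submodules $Rx_1\subseteq Rx_1+Rx_2\subseteq\cdots$ of the Noetherian module $M$ stabilizes, so $x_{N+1}=\sum_{i\le N}a_ix_i$ with $a_i\in R$, and extracting the degree-$g$ component gives $x_{N+1}=\sum_{i\le N}(a_i)_0x_i\in R_0x_1+\cdots+R_0x_N$, a contradiction. Granting this, both directions of (1) are immediate: if $A$ is Artinian then each $M_g$ is a finitely generated module over an Artinian ring, hence of finite length, so $M$ is modest; conversely, choosing homogeneous generators $m_1,\dots,m_s$ of $M$ with $\deg m_i=d_i$, faithfulness of $A$ yields an injection $A\hookrightarrow\prod_iM_{d_i}$, $a\mapsto(am_i)_i$, and modesty makes the target of finite length over $A$, so $A$ has finite length over itself and is Artinian.

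For (2) I would target the formulation promised in the abstract: produce a nonzero Laurent polynomial $Q\in\mathbb Z[G]$ such that $Q$ times the Hilbert series $H_M=\sum_g l_{R_0}(M_g)[g]$ is again a Laurent polynomial, where $\mathbb Z[G]$ acts on functions $G\to\mathbb Z$ by convolution. Summability in this sense is additive on short exact sequences and invariant under degree shifts (a shift multiplies the series by the unit $[a]$), and for a finite sum the relevant $Q$ is the product of the individual witnesses — here I use that $\mathbb Z[G]$ is a \emph{domain}, because $G$ is torsion-free, so this product stays nonzero. Using the graded prime filtration of $M$, whose quotients are shifts of $R/\mathfrak p$ for graded primes $\mathfrak p\supseteq\ann M$ (the primes are graded since $G$ is torsion-free), it therefore suffices to prove summability of $H_{R/\mathfrak p}$ for each graded prime $\mathfrak p$ with $R/\mathfrak p$ modest.

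I would establish this by Noetherian induction on the graded prime $\mathfrak p$. Write $S=R/\mathfrak p$, a graded domain with each $S_g$ of finite length over $R_0$ (it is a subquotient of the modest module $M$). If $\Supp(S)=\{0\}$ then $S$ is concentrated in degree $0$ and $H_S$ is already a Laurent polynomial. Otherwise pick $0\neq\theta\in S_e$ with $e\neq0$; since $S$ is a domain, $\theta$ is a nonzerodivisor, and the graded exact sequence $0\to S(-e)\xrightarrow{\theta}S\to S/\theta S\to0$ gives, by additivity of length in each degree, the identity $(1-[e])\cdot H_S=H_{S/\theta S}$. If $\theta$ is a unit then $S/\theta S=0$, so $(1-[e])H_S=0$ and we finish with $Q=1-[e]\neq0$. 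If $\theta$ is a nonunit, every graded prime $\mathfrak q$ occurring in the prime filtration of the modest module $S/\theta S$ strictly contains $\mathfrak p$ (it contains $\theta\notin\mathfrak p$), so by the induction hypothesis each $H_{R/\mathfrak q}$ is summable; hence $H_{S/\theta S}$ is summable, and multiplying a witness $Q'$ by the extra factor $1-[e]$ produces a nonzero $Q=Q'(1-[e])$ with $Q\cdot H_S$ a Laurent polynomial. Noetherian induction is legitimate because ascending chains of graded primes terminate, so I never need an a priori bound on Krull dimension.

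The main obstacle is conceptual rather than computational. In (1) it is the finite-generation lemma, which replaces the usual appeal to $R$ being a finitely generated $R_0$-algebra — false here, as $G$ may be infinitely generated — by a direct argument from the ascending chain condition combined with degree extraction. In (2) it is organizing the reduction as a Noetherian induction on graded primes, so that well-foundedness is supplied by the ascending chain condition rather than by dimension theory, while keeping the accumulated factors of the form $1-[e]$ from collapsing. The two genuinely group-theoretic inputs I rely on are that the relevant associated and filtration primes are graded and that $\mathbb Z[G]$ is a domain, both of which use that $G$ is torsion-free.
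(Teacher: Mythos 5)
Your proof is correct, and in both parts it departs genuinely from the paper's route. In part (1), your finite-generation lemma (each $M_g$ is finitely generated over $R_0$) is precisely the paper's \Cref{7.6}, but your ACC-plus-degree-extraction proof is much more elementary: extracting the degree-$g$ component of $x_{N+1}=\sum_{i\le N}a_ix_i$ does return you to $R_0x_1+\cdots+R_0x_N$, and nothing in the argument uses that $G$ is torsion-free or of finite rank. The paper instead derives \Cref{7.6} from a structure theory of $\Supp(R)$ --- \Cref{7.2}, the pointed-support finite-generation result \Cref{7.4} (proved with a separating linear functional), and \Cref{7.5} --- followed by a graded prime filtration; and its \Cref{7.7} establishes the equivalence by cycling through five conditions, whereas you prove the two implications directly, the converse via the faithful embedding $A\hookrightarrow\prod_i M_{d_i}$. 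In part (2), both you and the paper reduce by graded prime filtrations to a modest graded domain $S$, but the paper then splits on $G'=\Supp(S)\cap(-\Supp(S))$: if $G'\neq 0$, a homogeneous unit of nonzero degree kills the series, while if $G'=0$ the support is pointed, \Cref{7.4} makes $S$ a finitely generated algebra over the field $S_0$, hence graded by a finitely generated group, and the $\mathbb{Z}^n$-graded theorem of \cite{SturmfelsCA} is invoked. Your Noetherian induction on graded primes, via $(1-z^{e})\,HS_S(z)=HS_{S/\theta S}(z)$ coming from $0\to S[-e]\xrightarrow{\theta}S\to S/\theta S\to 0$, is self-contained: the case ``$\theta$ a unit'' subsumes the paper's $G'\neq 0$ case, and the induction (legitimate by ACC on graded primes, the filtration primes of $S/\theta S$ strictly containing $P$) replaces the external citation. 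What each buys: your route is shorter, avoids the convex-geometric input entirely, and part (1) even holds for arbitrary grading groups; the paper's route produces structural by-products of independent interest (finite generation of $R$ over $R_{G'}$ and of the support monoid in \Cref{7.5}), which it reuses elsewhere. One bookkeeping remark: your witness $Q$ has exponents in $G$, i.e.\ lies in $\mathbb{Z}[\mathbb{Q}^d]$ rather than in $\mathbb{Z}[z,z^{-1}]$ as the paper's definition of summable literally demands; but this matches the actual statement of \Cref{7.10}, whose factors $1-z^{v_i}$ have $v_i\in G$, so the looseness here is the paper's, not yours.
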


The organization of the paper is as follows. In section 2, we recall some basic notions of graded rings and modules, and prove part (1) of \Cref{1.3 main theorem}. In section 3, we introduce lemmas that will be used for constructions in section 4. In section 4 we construct a Noetherian graded ring for groups $G$ satisfying (2) of \Cref{1.3 main theorem}, thus proving (2) of \Cref{1.3 main theorem}. We also prove that under additional assumptions, the ring constructed is an example of a PID which is not ED. In section 5, we recall the notion of group cohomology and relate the second group cohomology with isomorphism classes of graded division rings. We prove that graded fields corresponds to a subgroup of the group cohomology which is the invariant group of an $S_2$-action. We prove some vanishing results on the group cohomology and relate the group addition and multiplication with Segre product and Veronese subrings. In section 6, we define the Noetherian classes inside the second group cohomology, which corresponds to Noetherian graded fields. We use this concept to get some nonvanishing results of second group cohomology, and explore the structure of the set of Noetherian classes. In section 7, we give a criterion on the modestness of a graded module, which leads to the existence of Hilbert series, and prove that modest module has summable series in $G$-graded setting.

In the rest part of the paper, we assume $(G,+)$ is a torsion-free abelian group and the unit of $G$ is $0$, unless otherwise stated.

\section{Finite rank of support}
In this section, we assume $R$ is a Noetherian $G$-graded ring. We first introduce some notations on rings graded over a general abelian group; they can be found in standard references on graded rings including \cite{MR519194Zngradedrings}, \cite{MR676974gradedringtheory} and \cite{MR2046303methodsofgradedrings}.

\begin{definition}
Let $R$ be a Noetherian $G$-graded ring, $M$ be a $G$-graded $R$-module.
\begin{enumerate}
\item Assume $H$ is another abelian group, $\pi:G \to H$ is a homomorphism of abelian groups. Then we can view $R$ as an $H$-graded ring with $R_h=\oplus_{\pi(g)=h} R_g$. We denote this graded ring structure by $R^\pi$.
\item Let $G' \subset G$ be a subgroup of $G$. Denote $R_{G'}=\oplus_{g \in G'}R_g$, which is a $G'$-graded subring of $R$. 
\item For $g \in G$, $M[g]$ is the $R$-module $M$ regraded by $M[g]_h=M_{g+h}$. 
\item A graded free module is a module of the form $\oplus_{i \in I}R[g_i]$. 
\item Let $R$ be a $G$-graded domain. Let $S$ be the set of nonzero homogeneous elements. Then the ring $S^{-1}R$ is called the homogeneous fraction field, denoted by $\Frac^h(R)$.
\item We say $R$ is a $G$-graded field if $R$ only has two homogeneous ideals, namely $0$ and $R$.
\end{enumerate}    
\end{definition}
From the definition we see if $H$ is an abelian group and $\pi:G \to H$ is a homomorphism of abelian groups, then $R^\pi_0=R_{\ker \pi}$.

Next we introduce propositions on structures of graded fields. In fact, in the cited refereces the following propositions are proved in the noncommutative setting; a $G$-graded ring which is not necessarily commutative, but has only $0$ and the unit ideal as the homogeneous ideal is called graded division ring, so a graded field is a commutative graded division ring, and propositions for graded division ring pass to graded fields.
\begin{proposition}[\cite{MR2046303methodsofgradedrings}, 2.7.1 and 4.6.1]
Let $F$ be a $G$-graded field, $G'=\Supp(F)$. Then:
\begin{enumerate}
\item $F_0$ is a field, and $G'$ is a subgroup of $G$.
\item For any $g \in G'$, $F_{g}$ is a one-dimensional $F_0$ vector space. Therefore, there is a graded $F_0$-vector space decomposition $F=\oplus_{g \in G'}F_0x_g$ where $0 \neq x_g \in F_g$, and $\{x_g,g \in G'\}$ is an $F_0$-basis of $F$.
\item Any $G$-graded module over $F$ is graded free, hence is free as ungraded module.
\end{enumerate}
\end{proposition}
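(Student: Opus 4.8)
The engine for the whole proposition is the observation that in a graded field every nonzero homogeneous element is a unit with a homogeneous inverse. First I would prove this: given $0 \neq x \in F_g$, the principal ideal $Fx$ is homogeneous and nonzero, hence equals $F$ by the definition of a graded field; thus $1 \in Fx$, and extracting the degree-$0$ component of a relation $1 = yx$ gives $1 = y_{-g}x$ with $y_{-g} \in F_{-g}$, so $x^{-1} \in F_{-g}$. Part (1) is then immediate. We have $0 \in G'$ since $1 \in F_0$ is nonzero (a graded field has $F \neq 0$); if $g \in G'$, the inverse of a nonzero element of $F_g$ lies in $F_{-g}$, so $-g \in G'$; and if $g,h \in G'$, the product of nonzero homogeneous elements of $F_g$ and $F_h$ is a unit, hence a nonzero element of $F_{g+h}$, so $g+h \in G'$. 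Thus $G'$ is a subgroup. Moreover every nonzero $c \in F_0$ has a homogeneous inverse of degree $0$, i.e.\ in $F_0$, so $F_0$ is a field.

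For part (2), fix $g \in G'$ and a nonzero $x_g \in F_g$. For any $y \in F_g$, multiplying by the homogeneous inverse $x_g^{-1} \in F_{-g}$ gives $y x_g^{-1} \in F_0$; setting $c = y x_g^{-1}$ yields $y = c x_g$ with $c \in F_0$. Hence $F_g = F_0 x_g$ is one-dimensional over the field $F_0$, and collecting one such $x_g$ for each $g \in G'$ produces the claimed $F_0$-basis of $F$.

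Part (3) is the substantive part, and the main obstacle is to produce a homogeneous basis of an arbitrary graded module $M$ with no finiteness hypotheses, so I expect to invoke the axiom of choice (bases of $F_0$-vector spaces) rather than any Noetherian induction. The key structural input is that for $g \in G'$, multiplication by the unit $x_g$ is an $F_0$-linear isomorphism $M_h \to M_{h+g}$ for every $h$: injectivity because $x_g$ is a unit, and surjectivity because any $n \in M_{h+g}$ equals $x_g(x_g^{-1}n)$ with $x_g^{-1}n \in M_h$. So within each coset $c \in G/G'$ all graded pieces are $F_0$-isomorphic. The plan is then to choose a representative $h_c$ of each coset $c$, choose an $F_0$-basis $\{e_{c,j}\}_j$ of $M_{h_c}$, and show $\{e_{c,j}\}_{c,j}$ is a homogeneous $F$-basis of $M$. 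For spanning, the submodule generated by the $e_{c,j}$ has degree-$(h_c+g)$ component $\sum_j F_0 x_g e_{c,j} = x_g M_{h_c} = M_{h_c+g}$, so it exhausts $M$. For independence, I would decompose any homogeneous relation degree by degree; in a degree $d \in c$ only the $e_{c,\cdot}$ contribute, each coefficient lies in $F_{d-h_c} = F_0 x_{d-h_c}$, and cancelling the unit $x_{d-h_c}$ reduces the relation to $F_0$-independence of the chosen basis of $M_{h_c}$. This exhibits $M$ as graded free, and forgetting the grading gives freeness as an ungraded module.
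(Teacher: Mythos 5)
Your proof is correct, but there is nothing in the paper to compare it against line by line: the paper does not prove this proposition, it quotes it from \cite{MR2046303methodsofgradedrings} (2.7.1 and 4.6.1), where it is established in the more general setting of noncommutative graded division rings. So your argument serves as a self-contained replacement for the citation. Your ``engine'' --- every nonzero homogeneous $x \in F_g$ has a homogeneous inverse in $F_{-g}$, obtained by taking the degree-$0$ component of a relation $1 = yx$ --- is exactly the characterization the paper records separately as a cited fact from \cite{MR676974gradedringtheory}, I.4.1, and deriving it directly from the definition of graded field makes parts (1) and (2) immediate, as you say. For part (3), your route is genuinely more explicit than the standard textbook one: the usual proof mimics the ungraded vector-space argument by taking a maximal homogeneous linearly independent family via Zorn's lemma, whereas you fix coset representatives $h_c$ of $G/G'$, take an $F_0$-basis of each $M_{h_c}$, and propagate it along the isomorphisms $M_{h_c} \to M_{h_c+g}$ given by multiplication by the units $x_g$, $g \in G'$. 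Both need choice, but yours produces the graded free decomposition $M \cong \oplus_{c,j} F[-h_c]$ on the nose, and reduces linear independence to a single unit cancellation in each degree. Two facts you use silently are worth flagging, though neither is a gap: that $1 \in F_0$ (the identity of a graded ring is homogeneous of degree $0$), which justifies ``extracting the degree-$0$ component''; and that for $d$ in the coset of $h_c$ one has $F_{d-h_{c'}} = 0$ for every other coset representative $h_{c'}$, since $d - h_{c'} \notin G' = \Supp(F)$ --- this is the point where the coset decomposition does its work in both the spanning and the independence steps.
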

\begin{proposition}[\cite{MR676974gradedringtheory}, I.4.1 and I.4.2]
Let $F$ be a $G$-graded ring.
\begin{enumerate}
\item $F$ is a $G$-graded field if and only if every nonzero homogeneous element is invertible in $F$.
\item If $F$ is a $G$-graded field and $G'$ is a subgroup of $G$, then $F_{G'}$ is also a $G$-graded field.
\end{enumerate}
\end{proposition}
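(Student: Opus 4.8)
The plan is to prove part (1) directly from the definitions and then bootstrap part (2) from it. The one technical point that both parts rely on, which I would isolate first as a preliminary observation, is that in any $G$-graded commutative ring the inverse of a homogeneous unit is again homogeneous. Indeed, if $x \in F_g$ is invertible with inverse $y = \sum_h y_h$ (homogeneous decomposition), then comparing graded components in the equation $1 = xy$ shows that the degree-$(h+g)$ piece $x y_h$ must vanish for $h \neq -g$ and equal $1$ for $h = -g$; since $x$ is a unit, $x y_h = 0$ forces $y_h = 0$, so $y = y_{-g} \in F_{-g}$. I expect this to be the crux of the whole argument; once it is in place everything else is a routine comparison of homogeneous components.

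For the forward direction of (1), I would assume $F$ is a $G$-graded field and take a nonzero homogeneous $x \in F_g$. The principal ideal $(x)$ is homogeneous and nonzero, so the defining property forces $(x) = F$, whence $1 \in (x)$. Writing $1 = rx$ and extracting the degree-$0$ component (only $r_{-g}x$ contributes) gives $r_{-g}x = 1$, so $x$ is invertible. For the converse, assume every nonzero homogeneous element is invertible and let $I$ be a nonzero homogeneous ideal. Since $I = \oplus_g (I \cap F_g)$ is nonzero, it contains some nonzero homogeneous $x$, and then $x^{-1}x = 1 \in I$ forces $I = F$; hence the only homogeneous ideals are $0$ and $F$, as required.

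For part (2) I would apply the criterion just established to $F_{G'}$. A nonzero homogeneous element of $F_{G'}$ is a nonzero $x \in F_g$ with $g \in G'$. Because $F$ is a graded field, $x$ is invertible in $F$, and by the preliminary observation its inverse is homogeneous of degree $-g$. Since $G'$ is a subgroup we have $-g \in G'$, so $x^{-1} \in F_{G'}$ and $x$ is already invertible inside $F_{G'}$. By part (1), $F_{G'}$ is a $G$-graded field.

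The only genuinely delicate step is the homogeneity of the inverse; the rest of the argument is forced once one works component-by-component with the grading. I would therefore state that observation cleanly at the outset so that both the "invertibility $\Rightarrow$ graded field" implication and the descent to the subgroup $G'$ follow without any further bookkeeping.
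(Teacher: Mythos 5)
Your proof is correct and complete. The paper itself does not prove this proposition at all --- it is quoted from N\u{a}st\u{a}sescu--Van Oystaeyen (\emph{Graded Ring Theory}, I.4.1 and I.4.2) as a known result --- so there is no in-paper argument to compare against; your write-up supplies a self-contained proof that matches the standard one. Your identification of the key technical point is exactly right: homogeneity of the inverse of a homogeneous unit is what makes both the converse of (1) and the descent to $F_{G'}$ work, and your component-comparison argument for it is sound. The only step you use silently that a fully rigorous version should record is that $1 \in F_0$ in any $G$-graded ring (write $1=\sum_g e_g$; multiplying any homogeneous $a$ by $1$ and comparing components gives $ae_0=a$, so $e_0$ is an identity and hence $e_0=1$); this is what licenses ``extracting the degree-$0$ component'' of the equation $1=rx$ and the comparison $xy_{-g}=1$ in your preliminary observation.
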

\begin{proposition}\label{2.4}
Let $F$ be a $G$-graded field, $G'=\Supp(F)$.
\begin{enumerate}
\item Assume $G'' \subset G'$ is a finite free abelian group with a basis $e_1,\ldots,e_n$. Choose nonzero elements $x_i \in F_{e_i}$, then as $G$-graded rings, $F_{G''}=k[x_1,\ldots,x_n,x_1^{-1},\ldots,x_n^{-1}]$ is a Laurent polynomial ring. 
\item If $G'$ is finite free with a basis $e_1,\ldots,e_n$ and $0 \neq x_i \in F_{e_i}$, then as $G$-graded rings, $F=k[x_1,\ldots,x_n,x_1^{-1},\ldots,x_n^{-1}]$ is a Laurent polynomial ring. 
\item If $G$ is torsion-free, then $F$ is a domain.
\end{enumerate}
\end{proposition}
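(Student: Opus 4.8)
The plan is to deduce (1) from the one-dimensionality of the homogeneous pieces, obtain (2) as the special case $G''=G'$, and reduce (3) to (1) by passing to a finitely generated subgroup.

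For (1), write $k=F_0$, which is a field by Proposition 2.2(1). Each $x_i$ is a nonzero homogeneous element of the graded field $F$, hence a unit by Proposition 2.3(1); consequently every Laurent monomial $x^{a}:=x_1^{a_1}\cdots x_n^{a_n}$ with $a=(a_1,\dots,a_n)\in\Z^n$ is a unit, in particular nonzero, and lies in degree $\sum_i a_i e_i$. I would introduce the graded $k$-algebra homomorphism $\phi\colon k[X_1,\dots,X_n,X_1^{-1},\dots,X_n^{-1}]\to F_{G''}$ sending $X_i\mapsto x_i$, where the source carries the $G''$-grading with $\deg X_i=e_i$. Surjectivity is immediate: for $g=\sum_i a_i e_i\in G''$ the piece $F_g$ is a one-dimensional $k$-vector space by Proposition 2.2(2), and it contains the nonzero element $x^{a}$, so $F_g=k\,x^{a}=\phi(k\,X^{a})$. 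For injectivity I use that $\{e_1,\dots,e_n\}$ is a $\Z$-basis of the free group $G''$, so the map $\Z^n\to G''$, $a\mapsto\sum_i a_i e_i$, is injective and distinct monomials $X^{a}$ land in distinct homogeneous components. Hence a nonzero Laurent polynomial $\sum_a c_a X^{a}$ has some term with $c_a\neq 0$ sent to the nonzero element $c_a x^{a}$, so its image is nonzero. Thus $\phi$ is a graded isomorphism.

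Part (2) is then the case $G''=G'$ of (1): since $G'=\Supp(F)$ we have $F=F_{G'}$, and the hypothesis that $G'$ is finite free with basis $e_1,\dots,e_n$ lets (1) identify $F$ with the Laurent polynomial ring. For (3), given nonzero $a,b\in F$, let $G''$ be the subgroup of $G'$ generated by the finitely many degrees occurring in the homogeneous decompositions of $a$ and $b$. Then $G''$ is a finitely generated subgroup of the torsion-free group $G$, hence free of finite rank, and $a,b\in F_{G''}$. By (1), $F_{G''}$ is a Laurent polynomial ring over the field $k$, which is a domain, so $ab\neq 0$ in the subring $F_{G''}$ and therefore in $F$. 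Alternatively one may linearly order the torsion-free group $G$ and run a leading-term argument, the top homogeneous component of $ab$ being the product of the top components of $a$ and $b$, which is a product of units and hence nonzero.

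The only genuinely delicate point is the injectivity in (1), where freeness of $G''$ is essential: it is exactly what guarantees that distinct Laurent monomials occupy distinct degrees, so that no nontrivial homogeneous relation can occur. Everything else is bookkeeping with the one-dimensional pieces supplied by Proposition 2.2(2), and (2) and (3) are formal consequences once (1) is in hand.
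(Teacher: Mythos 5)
Your proposal is correct, and for part (3) --- the only part the paper actually proves --- your argument is exactly the paper's: collect the finitely many degrees of $a$ and $b$, note they generate a finitely generated (hence, by torsion-freeness, finite free) subgroup $G''\subset G'$, and conclude $ab\neq 0$ because $F_{G''}$ is a Laurent polynomial ring over a field by (1). The only difference is that the paper outsources (1) and (2) to Theorem 3.5 of the author's earlier work, whereas you prove them directly; your proof of (1) is sound (one-dimensionality of the homogeneous pieces gives surjectivity of $X_i\mapsto x_i$, and freeness of $G''$ puts distinct monomials in distinct degrees, giving injectivity), so your write-up is simply a self-contained version of the same route.
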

\begin{proof}
(1) and (2) are already proved in Theorem 3.5 of \cite{MR4068915Ggradedirreducible}, so we only prove (3). Take any two nonzero elements $a,b \in F$. Then there are finitely many degrees where the components of $a$ and $b$ are nonzero, and these degrees generate a finitely generated abelian subgroup $G''$ of $G$, but $G$ is torsion-free, so $G''$ must be a finite free abelian group. That is, $a,b \in F_{G''}$ for such $G''$. And $F_{G''}$ is a Laurent polynomial ring over a field, so it is a domain, so $ab \neq 0$. This means $F$ is a domain.
\end{proof}
\begin{proposition}
Let $R$ be a $G$-graded domain and $F=\Frac^h(R)$. Then $F$ is a $G$-graded field, and $\Supp(F)=\Z\Supp(R)$.    
\end{proposition}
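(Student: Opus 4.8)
The plan is to first equip $F=\Frac^h(R)=S^{-1}R$ with a $G$-grading, then verify the graded-field condition, and finally identify the support. Since $S$ is the set of nonzero homogeneous elements of the domain $R$ (so $0\notin S$ and $F$ is a genuine nonzero ring), every element of $F$ is a fraction $a/s$ with $s\in S$ and $a\in R$. I would declare $a/s$ to be homogeneous of degree $\deg a-\deg s$ whenever $a$ is a nonzero homogeneous element, and define $F_g$ to be the additive subgroup generated by these fractions of degree $g$, together with $0$. The first task is well-definedness: if $a/s=b/t$ with $a,b$ nonzero homogeneous and $s,t\in S$, then the defining relation of $S^{-1}R$ provides some $u\in S$ with $u(at-bs)=0$, and since $R$ is a domain with $u\neq 0$ this gives $at=bs$ in $R$; both sides being nonzero and homogeneous forces $\deg a+\deg t=\deg b+\deg s$, i.e. $\deg a-\deg s=\deg b-\deg t$, so the degree is intrinsic to the fraction.

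Next I would verify that $F=\oplus_{g}F_g$. Writing $a=\sum_g a_g$ for the homogeneous decomposition in $R$ gives $a/s=\sum_g a_g/s$, so $F=\sum_g F_g$. For directness, suppose a finite sum $\sum_i x_i=0$ with $x_i\in F_{g_i}$ of pairwise distinct degrees $g_i$. Clearing denominators over a common homogeneous $s\in S$, write $x_i=a_i/s$ with the $a_i$ homogeneous of pairwise distinct degrees; the relation becomes $\bigl(\sum_i a_i\bigr)/s=0$, hence $\sum_i a_i=0$ in the domain $R$, and directness of the grading of $R$ forces each $a_i=0$, so each $x_i=0$. Since $F_gF_h\subseteq F_{g+h}$ is immediate from the definition of the degrees, $F$ is a $G$-graded ring.

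To see $F$ is a $G$-graded field, I would invoke the characterization that it suffices for every nonzero homogeneous element to be invertible. A nonzero homogeneous element is $a/s$ with $a$ a nonzero homogeneous element of $R$; but then $a\in S$ by definition of $S$, so $s/a\in F$ and $(a/s)(s/a)=1$. Thus every nonzero homogeneous element is a unit and $F$ is a graded field; in particular $\Supp(F)$ is a subgroup of $G$.

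Finally, for the support: if $F_g\neq 0$, choose a nonzero homogeneous $a/s$ of degree $g$; then $\deg a,\deg s\in\Supp(R)\subseteq\Z\Supp(R)$ and $g=\deg a-\deg s\in\Z\Supp(R)$, giving $\Supp(F)\subseteq\Z\Supp(R)$. Conversely, for $g\in\Supp(R)$ a nonzero $a\in R_g$ yields $a/1\in F_g$, so $\Supp(R)\subseteq\Supp(F)$; since $\Supp(F)$ is a subgroup it contains the subgroup $\Z\Supp(R)$ generated by $\Supp(R)$. The two inclusions give $\Supp(F)=\Z\Supp(R)$. The only slightly delicate point is the directness of the candidate grading on the localization, which is exactly where the domain hypothesis does the real work; everything else is a direct unwinding of definitions together with the two cited structural propositions on graded fields.
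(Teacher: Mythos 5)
Your proof is correct and takes essentially the same route as the paper: equip $\Frac^h(R)$ with the grading $\deg(a/s)=\deg a-\deg s$ (the domain hypothesis giving well-definedness and directness) and read off the support. The paper compresses this into two lines, getting $\Supp(F)=\Supp(R)-\Supp(R)=\Z\Supp(R)$ from the fact that $\Supp(R)$ is a semigroup, whereas you close the argument using that $\Supp(F)$ is a subgroup because $F$ is a graded field; this difference is cosmetic.
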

\begin{proof}
Since $\deg(ab)=\deg(a)+\deg(b)$ whenever $ab \neq 0$, $R$ being a domain implies that $\Supp(R)$ is a semigroup. Now 
$F$ is graded by setting $\deg(a/b)=\deg(a)-\deg(b)$, so we have $\Supp(F)=\Supp(R)-\Supp(R)=\mathbb{Z}\Supp(R)$.    
\end{proof}
\begin{proposition}[Graded prime filtration]
Let $M$ be a finitely generated graded $R$-module. Then, there exists a finite filtration of $M$ such that each factor of the filtration is of the form $R/P[g]$ for some homogeneous prime ideal $P$ and $g \in G$.  
\end{proposition}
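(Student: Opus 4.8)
The plan is to imitate the classical construction of a prime filtration, replacing associated primes by \emph{homogeneous} associated primes and ordinary Noetherian induction by induction on graded submodules. The one bookkeeping point to settle at the outset is the degree shift. Suppose $m \in M_g$ is a nonzero homogeneous element whose annihilator is a homogeneous prime $P$. Then the multiplication map $r \mapsto rm$ sends $R_{h-g}$ into $M_h$, so, with the convention $R[-g]_h = R_{h-g}$, it is a degree-zero homomorphism $R[-g] \to M$ whose kernel is $P[-g]$. Its image is therefore a graded submodule of $M$ isomorphic to $(R/P)[-g]$, a factor of exactly the shape $R/P[g']$ with $g' = -g$. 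So the whole problem reduces to producing, inside any nonzero finitely generated graded $R$-module, a homogeneous element whose annihilator is a homogeneous prime ideal.

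To produce such an element I would run the maximal-annihilator argument inside the graded category. Consider the family of homogeneous ideals $\{\ann(m) : 0 \neq m \in M \text{ homogeneous}\}$; it is nonempty because a nonzero graded module has a nonzero homogeneous element, and since $R$ is Noetherian it has a maximal member $P = \ann(m)$ with $m \in M_g$. I claim $P$ is prime. It is a homogeneous ideal, so by the standard criterion that a homogeneous ideal is prime as soon as it is prime on homogeneous elements, it is enough to check the following: if $a,b$ are homogeneous with $ab \in P$ and $b \notin P$, then $a \in P$. Indeed $bm \neq 0$ is again homogeneous and $\ann(bm) \supseteq P$, so maximality forces $\ann(bm) = P$; since $a(bm) = (ab)m = 0$, we get $a \in \ann(bm) = P$. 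Thus $P$ is a homogeneous prime in $\Ass M$, and the previous paragraph supplies a graded submodule of the required form.

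Finally I would assemble the filtration by induction on graded submodules. Starting from $M$, the first two paragraphs give a graded submodule $M_1 \subseteq M$ with $M_1 \cong R/P_1[g_1]$. Applying the same argument to the finitely generated graded module $M/M_1$ (when it is nonzero) yields a graded submodule $M_2$ with $M_1 \subsetneq M_2$ and $M_2/M_1 \cong R/P_2[g_2]$, and iterating produces a strictly ascending chain $M_1 \subsetneq M_2 \subsetneq \cdots$ of graded submodules. Since $M$ is a finitely generated module over a Noetherian ring, it is a Noetherian module, so this chain stabilizes; stabilization can occur only when the relevant quotient vanishes, i.e. when $M_n = M$. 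The resulting chain $0 = M_0 \subset M_1 \subset \cdots \subset M_n = M$ is the desired filtration, with each factor $M_i/M_{i-1} \cong R/P_i[g_i]$ a shift of a homogeneous prime quotient.

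The only genuinely nonformal ingredient is the homogeneous primality criterion invoked in the middle paragraph, namely that a homogeneous ideal which is prime on homogeneous elements is prime; this is precisely where torsion-freeness of $G$ enters, through the existence of a total order on $G$ (equivalently, a monomial order on the finitely generated subgroup spanned by the degrees occurring in a given pair of elements), which lets one compare leading terms. Once this criterion is granted, everything else is the verbatim graded translation of the ungraded prime-filtration argument, and I expect no further obstacle.
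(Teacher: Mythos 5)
Your proof is correct and follows essentially the same route as the paper: produce a graded submodule isomorphic to $R/P[g]$ for a homogeneous prime $P$, then conclude by Noetherian induction on graded submodules. The only difference is one of packaging: the paper cites Bourbaki for the fact that graded modules have homogeneous associated primes, whereas you prove that step directly via the maximal-annihilator argument over homogeneous elements together with the homogeneous primality criterion, which is valid here precisely because the standing assumption that $G$ is torsion-free makes $G$ (or the finitely generated subgroup spanned by the relevant degrees) totally orderable.
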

\begin{proof}
By results in \cite{MR1727221BourbakiCommutative} we see graded modules have homogeneous associated primes. Thus, every nonzero graded module contains a graded submodule of the form $R/P[g]$. Now we can use Noetherian induction to claim every submodule of $M$ has a finite graded prime filtration.    
\end{proof}

\begin{theorem}\label{2.7}
For any Noetherian $G$-graded ring $R$, $\rank_\Z \Z\Supp(R)<\infty$.        
\end{theorem}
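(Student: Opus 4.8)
The plan is to reduce to a graded domain, pass to its homogeneous fraction field, and pit the Noetherianity of that field (it is a localization of $R$) against the structure of Laurent rings via faithfully flat descent.

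First I would use the graded prime filtration (the proposition above, applied to $M=R$) to get a finite chain $0=M_0\subset M_1\subset\cdots\subset M_k=R$ with $M_j/M_{j-1}\cong R/P_j[g_j]$ for homogeneous primes $P_j$ and $g_j\in G$. Reading off graded pieces in degree $g$, if $R_g\neq 0$ then some step satisfies $(M_j/M_{j-1})_g=(R/P_j)_{g+g_j}\neq 0$, so $g\in \Supp(R/P_j)-g_j$. Hence $\Z\Supp(R)\subseteq\sum_{j=1}^k\bigl(\Z\Supp(R/P_j)+\Z g_j\bigr)$, and since rank is subadditive over a finite sum of subgroups, it suffices to bound each $\rank_\Z\Z\Supp(R/P_j)$. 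Thus I may assume $R$ is a Noetherian $G$-graded domain.

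Next, let $F=\Frac^h(R)=S^{-1}R$, where $S$ is the set of nonzero homogeneous elements. As a localization of the Noetherian ring $R$, $F$ is Noetherian, and by the preceding proposition $F$ is a $G$-graded field with $\Supp(F)=\Z\Supp(R)$. So everything comes down to showing that a Noetherian $G$-graded field has finite-rank support. Suppose not: then $\Supp(F)$ contains a $\Z$-independent sequence $g_1,g_2,\ldots$; choose $0\neq x_i\in F_{g_i}$ and set $G''=\bigoplus_i\Z g_i$. Applying \Cref{2.4}(1) to each finite subfamily and taking the union, $F_{G''}=\bigcup_n F_0[x_1^{\pm},\ldots,x_n^{\pm}]=F_0[x_i^{\pm}:i\in\mathbb{N}]$ is a Laurent polynomial ring in infinitely many variables, which is not Noetherian: the ideals $(x_1-1,\ldots,x_n-1)$ form a strictly ascending chain, since killing them sets $x_1,\ldots,x_n$ to $1$ and leaves $x_{n+1}-1\neq 0$. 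On the other hand, decomposing $\Supp(F)$ into cosets of $G''$ and using that each $F_g$ is a one-dimensional $F_0$-space, $F$ is a \emph{free} $F_{G''}$-module on a set of coset representatives, hence faithfully flat over $F_{G''}$. Faithfully flat descent of the Noetherian property (for an ascending chain of ideals $I_n$ of $F_{G''}$, the extensions $I_nF$ stabilize and $I_nF\cap F_{G''}=I_n$) then forces $F_{G''}$ to be Noetherian, a contradiction.

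The main obstacle is precisely that $F_{G''}$ is merely a \emph{subring} of the Noetherian ring $F$, and Noetherianity does not descend to subrings in general. The whole argument hinges on upgrading ``subring'' to ``faithfully flat (indeed free) extension,'' which is exactly what the graded-field structure of $F$ supplies through the one-dimensionality of homogeneous components and the coset decomposition of $\Supp(F)$ modulo $G''$. Establishing this freeness and invoking the descent lemma is the technical heart; the reduction to the domain case and the rank bookkeeping are routine once the cited structure theorems are in hand.
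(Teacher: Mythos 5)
Your proposal is correct, and its core argument is genuinely different from the paper's. Both proofs share the same outer frame: reduce to a graded domain via the graded prime filtration, pass to $\Frac^h$ to get a Noetherian $G$-graded field, and exploit freeness of a graded field over a graded subfield $F_{G''}$ together with cyclic-pure descent of Noetherianity (the paper's \Cref{3.7cyclicpurelemma1} and \Cref{3.8cyclicpurelemma2}; your "faithfully flat descent" is the same mechanism, since free $\Rightarrow$ faithfully flat $\Rightarrow$ cyclic pure). The difference is where the contradiction comes from. You take $G''$ to be the \emph{infinitely} generated free group $\bigoplus_i \Z g_i$, identify $F_{G''}$ as a Laurent polynomial ring in infinitely many variables via \Cref{2.4}(1) and a union, exhibit the explicit strictly ascending chain $(x_1-1,\ldots,x_n-1)$, and let descent do all the work: $F$ Noetherian would force $F_{G''}$ Noetherian, contradiction. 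The paper instead works with a \emph{finitely} generated $G'$ of rank $2n$: it forms the elements $y_i=x_{2i-1}+x_{2i}$, uses Noetherianity of $F$ to get $y_{n+1}\in(y_1,\ldots,y_n)$, then applies a projection/regrading trick ($\pi=\phi\iota$ killing $e_1,\ldots,e_{2n}$) to split off the homogeneous component $x_{2n+1}$ and conclude that $(y_1,\ldots,y_n)$ is the unit ideal, which cyclic purity pulls back to the visibly proper ideal $(x_1+x_2,\ldots,x_{2n-1}+x_{2n})$ of the finite Laurent ring $R_{G'}$. Your route is shorter and more conceptual, replacing the element-level construction and the regrading argument with the single observation that an infinite-variable Laurent ring is not Noetherian; the paper's route, at the cost of the extra combinatorial work, keeps everything inside finitely generated Laurent subrings and pinpoints a concrete finitely generated ideal that witnesses the failure. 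Both are complete proofs, and your freeness claim is even directly available from the paper's cited structure theory (any $G$-graded module over the graded field $F_{G''}$ is graded free), so the "technical heart" you flagged is already covered.
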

\begin{proof}
We first prove this theorem when $R$ is a graded-field. In this case, $R_0$ is a field and $\Supp(R)$ is a group. We may assume $G=\Supp(R)$ by replacing $G$ with $\Supp(R)$. For any $g \in G$, $R_g$ is a one-dimensional $R_0$-vector space. We assume otherwise that $\rank_\Z G=\infty$. Then we can choose infinitely many elements $e_1,e_2,\ldots,$ such that $e_i$'s are $\mathbb{Q}$-linearly independent and $R_{e_i} \neq 0$ for $i \in \mathbb{N}$. We choose $0 \neq x_i \in R_{e_i}$. Let $y_i=x_{2i-1}+x_{2i}$. The ideal generated by all $y_i$'s is finitely generated; thus there is a finite $n$ such that $y_{n+1}$ is generated by $y_1,\ldots,y_n$. We consider $\iota: G \to \mathbb{Q}\otimes_\Z G$; this map is injective since $G$ is torsion-free. So $\iota(e_i)$ are still $\mathbb{Q}$-linearly independent. Thus they form part of $\Q$-basis of $\Q \otimes_\Z G$. We write $\Q\otimes_\Z G=\oplus_{1 \leq i \leq 2n} \Q e_i\oplus H$, then we can take a projection $\phi$ which maps $e_1,\ldots,e_{2n}$ to $0$ and preserves the other direct summands. We let $\pi=\phi\iota:G \to H$. $\pi$ maps $e_1,\ldots,e_{2n}$ to $0$ and $\pi(e_{2n+1}) \neq \pi(e_{2n+2})$. Now we consider the ring $R^\pi$ which is $R$ viewed as an $H$-graded ring via regrading, then $y_1,\ldots,y_n$ are homogeneous elements in $R^\pi$ and generate $y_{n+1}$. Therefore, they generate every homogeneous component of $y_{n+1}$. Note that $x_{2n+1}$ has degree $\pi(e_{2n+1})$, $x_{2n+2}$ has degree $\pi(e_{2n+2})$ and $\pi(e_{2n+1})\neq\pi(e_{2n+2})$, so $x_{2n+1}$ and $x_{2n+2}$ are two homogeneous components of $y_{n+1}$ in different degrees, so $y_1,\ldots,y_n$ generate $x_{2n+1}$. But $R$ is a graded-field, so $x_{2n+1}$ generates the unit ideal $R$. Thus $y_1,\ldots,y_n$ generate the unit ideal $R$. Let 
$G'=\oplus_{1 \leq i \leq 2n}\Z e_i$ and $R'=R_0[x_1,\ldots,x_{2n},x_1^{-1},\ldots,x_{2n}^{-1}] \subset R$. Then by \Cref{2.4}, $R'=R_{G'}$ is a Laurent polynomial ring in $x_1,\ldots,x_{2n}$. So $R'$ is a $G$-graded field. We view $R$ as an $R'$-module, then it is free. Therefore, the ring map $R' \to R$ is cyclic pure. So $(y_1,\ldots,y_n)R'=(y_1,\ldots,y_n)R\cap R'=R'$. But $(y_1,\ldots,y_n)R'=(x_1+x_2,\ldots,x_{2n-1}+x_{2n})$ lies in the kernel of the surjective ring map $R' \xrightarrow[]{x_{2i-1}\to (-1) , x_{2i}\to 1}R_0$, so it cannot be the unit ideal. This is a contradiction. So $\rank_\Z \Supp(R)<\infty$ for a Noetherian $G$-graded field $R$.

Now we prove that the graded field case leads to the general case. Let $R$ be a Noetherian $G$-graded ring, then $R$ has a finite prime filtration whose factors are of the form $R/P_i[g_i]$, where $P_i$'s are homogeneous prime ideals. We consider $F_i=\Frac^h(R/P_i)$, then they are Noetherian $G$-graded fields. By the previous part of the proof, $\rank_\Z\Supp(F_i)<\infty$. Since we have
$$\Supp(R) \subset \cup_i (\Supp(R/P_i)+g_i) \subset \cup_i (\Supp(F_i)+g_i) \subset \sum_i (\Supp(F_i)+\Z g_i),$$
we see $\Supp(R)$ lies inside an abelian group of finite rank, so $\rank_\Z \Supp(R)<\infty$. 
\end{proof}

\section{Lemmas for the construction}
In this section, we introduce all the lemmas for the construction of a $G$-graded Noetherian ring for any abelian group $G$ of finite $\mathbb{Z}$-rank in section 4. First, recall the following well-known lemma by Cohen:
\begin{lemma}[\cite{MR33276Cohentheorem}]
Let $R$ be a commutative ring. Then $R$ is Noetherian if and only if all the prime ideals of $R$ are finitely generated. 
\end{lemma}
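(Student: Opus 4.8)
The plan is to prove the nontrivial direction: assuming every prime ideal of $R$ is finitely generated, I would show every ideal is finitely generated. The forward implication is immediate, since a prime ideal is in particular an ideal, so a Noetherian ring has all prime ideals finitely generated. For the converse I would argue by contradiction, running a maximality argument (Zorn's lemma) on the poset of counterexamples and then showing the maximal counterexample is forced to be prime.

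First I would let $\Sigma$ be the set of ideals of $R$ that are \emph{not} finitely generated, ordered by inclusion, and assume for contradiction that $\Sigma$ is nonempty. To apply Zorn's lemma I would check that every chain $\{I_\alpha\}$ in $\Sigma$ has an upper bound in $\Sigma$: the union $I=\bigcup_\alpha I_\alpha$ is an ideal, and were it finitely generated by $a_1,\ldots,a_n$, then all the $a_i$ would already lie in a single $I_\alpha$ (the chain being totally ordered), forcing $I=I_\alpha$ to be finitely generated, a contradiction. Hence $\Sigma$ has a maximal element $P$.

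The heart of the argument is to show that this maximal non--finitely-generated ideal $P$ is necessarily prime; this is the step I expect to be the main obstacle, since it requires extracting finite generation of $P$ from finite generation of two ideals strictly larger than $P$. Supposing $P$ is not prime, I would pick $a,b\notin P$ with $ab\in P$. Then $P+(a)$ strictly contains $P$ and so is finitely generated, while the colon ideal $(P:a)=\{x\in R: xa\in P\}$ contains both $P$ and $b$, hence strictly contains $P$ and is also finitely generated. Choosing generators of $P+(a)$ in the form $p_i+r_i a$ with $p_i\in P$, I would establish the identity $P=(p_1,\ldots,p_n)+a\cdot(P:a)$: the inclusion $\supseteq$ is clear from $a(P:a)\subseteq P$, and for $\subseteq$ one takes $x\in P$, expands it using the generators of $P+(a)$, and observes that the resulting coefficient of $a$ lies in $(P:a)$. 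Since $(P:a)$ is finitely generated, this exhibits $P$ with finitely many generators, contradicting $P\in\Sigma$.

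Therefore $P$ must be prime, and then by hypothesis $P$ is finitely generated, again contradicting $P\in\Sigma$. Thus $\Sigma$ is empty and every ideal of $R$ is finitely generated, i.e. $R$ is Noetherian. The only delicate points are the verification of the colon-ideal identity and the bookkeeping that the generators of $P+(a)$ may be taken with their $P$-parts separated out; the remaining steps are routine.
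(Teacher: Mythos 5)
Your proof is correct and complete: the Zorn's lemma reduction, the verification that a union of a chain of non--finitely-generated ideals is not finitely generated, and the key identity $P=(p_1,\ldots,p_n)+a\cdot(P:a)$ are all carried out accurately, and this is precisely the classical argument for Cohen's theorem. The paper itself offers no proof of this lemma---it is stated with a citation to Cohen's original article---so your proposal simply supplies the standard proof that the paper delegates to the reference.
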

It is well-known that integral extension satisfies lying over property and morphisms between spectra of rings induced by an integral extension have $0$-dimensional fibre. So we get:
\begin{corollary}
Let $\phi:R \to S$ be an integral extension. If $P \in \Spec(R)$ with $PS \in \Spec(S)$, then $PS\cap R=P$ and $PS$ is the only prime over $P$.    
\end{corollary}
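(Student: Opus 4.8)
The plan is to deduce both conclusions from the two stated properties of integral extensions—lying over and the incomparability encoded in the $0$-dimensional fibre condition—together with the elementary observation that any prime of $S$ lying over $P$ automatically contains the extended ideal $PS$.

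First I would record the trivial inclusion $P \subseteq PS \cap R$, valid for any prime $P$ since $\phi(P) \subseteq PS$ and hence $P \subseteq \phi^{-1}(PS)$. It remains to establish the reverse inclusion. By the lying over property there is a prime $Q \in \Spec(S)$ with $Q \cap R = P$. The key point is that $P \subseteq Q$ forces $PS \subseteq Q$, because $Q$ is an ideal of $S$ containing the generators $\phi(P)$ of $PS$. Contracting this containment gives $PS \cap R \subseteq Q \cap R = P$, which together with the trivial inclusion yields $PS \cap R = P$. In particular this shows that $PS$ itself lies over $P$, a fact I will reuse in the second part.

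For the uniqueness statement I would take an arbitrary $Q' \in \Spec(S)$ with $Q' \cap R = P$ and run the same argument: $P \subseteq Q'$ forces $PS \subseteq Q'$. Now both $PS$ and $Q'$ are primes of $S$ contracting to $P$, so both lie in the fibre of the induced map $\Spec(S) \to \Spec(R)$ over $P$, and they are comparable since $PS \subseteq Q'$. Because this fibre is $0$-dimensional, two comparable primes in it must coincide, so $PS = Q'$; hence $PS$ is the only prime over $P$.

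The argument is short, and the only real subtlety is the order in which the two properties are invoked: lying over is used both to produce a prime over $P$ and thereby to pin down the contraction of $PS$, while incomparability is used to upgrade containment to equality. I do not anticipate any serious obstacle; the one point worth double-checking is that $PS$ is genuinely prime—which is exactly the hypothesis—so that it is a legitimate point of the fibre to which the incomparability property applies.
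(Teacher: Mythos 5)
Your proof is correct and is exactly the argument the paper intends: the corollary is stated as an immediate consequence of lying over plus the $0$-dimensionality (incomparability) of fibres of integral extensions, and you fill in those two steps precisely as the paper's one-line derivation suggests.
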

\begin{lemma}\label{3.3}
Let $R$ be a commutative ring, $\{R_i, i \in \mathbb{N}\}$ be a collection of subrings of $R$ such that $R_i \subset R_{i+1}$ is an integral extension for any $i$, and $R=\cup_i R_i$. Assume for some $n$, $P_n$ is a prime ideal of $R_n$ such that $P_nR_i$ is prime in $R_i$ for any $i \geq n$. Then there is a unique prime of $R$ lying over $P_n$, which is $P_nR$.
\end{lemma}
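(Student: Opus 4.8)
The plan is to prove the statement in two movements: first establish that the extended ideal $P_n R$ is itself a prime ideal of $R$, and then deduce both that it lies over $P_n$ and that it is the only such prime directly from the Corollary stated just above, applied to the integral extension $R_n \subset R$. The first thing I would record is that $R$ is integral over $R_n$: any $r \in R$ lies in some $R_i$ with $i \ge n$, and since a composite of integral extensions is integral, $R_i$ is integral over $R_n$, whence $r$ is integral over $R_n$. Thus $R_n \subset R$ is an integral extension, and the Corollary becomes available to us once we know $P_n R \in \Spec(R)$.

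The heart of the argument is showing $P_n R$ is a proper prime ideal. Because forming the extension of an ideal commutes with the directed union $R = \bigcup_i R_i$, every element of $P_n R$ is a finite sum $\sum_k a_k r_k$ with $a_k \in P_n$ and $r_k \in R_{i_k}$, hence lies in $P_n R_i$ for $i = \max_k i_k$; this gives the identification $P_n R = \bigcup_{i \ge n} P_n R_i$, a directed union of the ideals $P_n R_i$, each of which is prime by hypothesis. To see the union is prime, suppose $ab \in P_n R$. Then $ab \in P_n R_{i_0}$ for some $i_0 \ge n$, and choosing $j \ge i_0$ large enough that $a, b \in R_j$ gives $ab \in P_n R_{i_0} \subseteq P_n R_j$; since $P_n R_j$ is prime in $R_j$, either $a$ or $b$ lies in $P_n R_j \subseteq P_n R$. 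Properness is immediate, since $1 \in P_n R$ would force $1 \in P_n R_i$ for some $i$, contradicting that $P_n R_i$ is a proper prime ideal. Hence $P_n R \in \Spec(R)$.

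With $P_n R \in \Spec(R)$ in hand, I would conclude by applying the Corollary above to the integral extension $R_n \subset R$ and the prime $P = P_n$: since $P_n R$ is prime, the Corollary yields simultaneously $P_n R \cap R_n = P_n$ (so $P_n R$ genuinely lies over $P_n$) and the fact that $P_n R$ is the unique prime of $R$ over $P_n$, which is precisely the assertion of the lemma. I expect the only real obstacle to be the primality of $P_n R$; once the identity $P_n R = \bigcup_{i \ge n} P_n R_i$ is written down explicitly, this reduces to the standard fact that a directed union of proper prime ideals is a proper prime ideal, so no genuinely hard computation remains and everything else is bookkeeping with the Corollary.
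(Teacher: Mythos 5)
Your proposal is correct and follows essentially the same route as the paper: both identify $P_nR$ with the directed union $\bigcup_{i\ge n}P_nR_i$ of the primes $P_nR_i$ to conclude it is prime, and both then invoke the integrality of $R_n\subset R$ together with the preceding Corollary to obtain uniqueness. The only cosmetic difference is that the paper first applies the Corollary to each $R_n\subset R_i$ to note that every $P_nR_i$ lies over $P_n$, whereas you defer the lying-over statement to the single final application of the Corollary; both are valid bookkeeping choices.
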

\begin{proof}
We see $R_i \to R_j$ is integral for any $i\leq j$. Then $P_nR_i \cap R_i=P_n$ for any $i \geq n$ by last lemma. So $P_nR$ is a direct limit of primes $P_nR_i$ lying over $P_n$, hence it is a prime ideal of $R$ lying over $P_n$. But $R_n \to R$ is also integral, so $P_nR$ is the only prime lying over $P_n$.
\end{proof}

The following lemma deals with the prime ideal under purely transcendental base change.
\begin{lemma}\label{3.4}
Let $k$ be a field, $\{\alpha_i\}_{i \in I}$ be a set of indeterminates, $K=k(\alpha_i,i \in I)$,  and $R$ be a $k$-algebra.
\begin{enumerate}
\item If $R$ is a $k$-domain, then $K\otimes_k R$ is a $K$-domain.
\item For $P \in \Spec(R)$, $K\otimes_kP=P(K\otimes_kR)$ is a prime ideal of $K\otimes_kR$.
\item Let $R'=K\otimes_k R$, $I$ be an ideal of $R$, $I'=IR'$. Then for any $P' \in \Min(I')$, $P'=(P'\cap R)R'$.
\end{enumerate}
\end{lemma}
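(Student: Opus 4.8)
The three parts build on one another, so the plan is to prove them in order, with (1) feeding (2) and (2) feeding (3). Throughout write $R' = K \otimes_k R$.

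For (1), the idea is to realize $R'$ as a localization of a polynomial ring over $R$. Writing $A = k[\alpha_i, i \in I]$ and $S = A \setminus \{0\}$, we have $K = S^{-1}A$ and $A \otimes_k R \cong R[\alpha_i, i \in I]$, the polynomial ring over $R$. Base change then identifies $R' = K \otimes_k R \cong S^{-1}(A \otimes_k R) = S^{-1}R[\alpha_i, i \in I]$, the localization of $R[\alpha_i, i \in I]$ at the multiplicative set $S$ of nonzero polynomials with coefficients in $k$. If $R$ is a domain then $R[\alpha_i, i \in I]$ is a domain, and any localization of a domain is again a domain; since this localization contains $K = S^{-1}A$, it is a $K$-domain. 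This settles (1).

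For (2), I would exploit that $K$ is flat over $k$. Tensoring the short exact sequence $0 \to P \to R \to R/P \to 0$ with $K$ over $k$ preserves exactness, giving
$$0 \longrightarrow K \otimes_k P \longrightarrow K \otimes_k R \longrightarrow K \otimes_k (R/P) \longrightarrow 0.$$
The left-hand map is injective, and a routine check identifies its image with the extended ideal $P R'$ (the $K$-span of the elements $1 \otimes p$, $p \in P$, coincides with the $R'$-ideal they generate since $P$ is an $R$-ideal); thus $K \otimes_k P = P R'$ and the quotient $R'/PR'$ is identified with $K \otimes_k (R/P)$. Because $P$ is prime, $R/P$ is a $k$-domain, so by (1) the ring $K \otimes_k (R/P)$ is a domain; hence $PR'$ is a prime ideal of $R'$.

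For (3), which is the real point, the plan is a short sandwiching argument using (2). Fix $P' \in \Min(I')$ and set $Q' = P' \cap R$ (the contraction along $r \mapsto 1 \otimes r$). Contraction of a prime is prime, so $Q'$ is a prime of $R$; moreover $I \subseteq I' \subseteq P'$ forces $I \subseteq Q'$. By (2), the extension $Q'R' = K \otimes_k Q'$ is a prime ideal of $R'$. Since $I \subseteq Q'$ we have $I' = IR' \subseteq Q'R'$, and since $Q' \subseteq P'$ we have $Q'R' \subseteq P'$. Thus $Q'R'$ is a prime of $R'$ lying between $I'$ and $P'$, and minimality of $P'$ over $I'$ forces $Q'R' = P'$, i.e. $P' = (P' \cap R)R'$. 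The only genuine obstacle is establishing (2): once the extension of a prime is known to stay prime, (3) drops out immediately from minimality, and (2) in turn rests on (1) together with flatness of $k \to K$. Note that no Noetherian hypothesis on $R$ is needed, since minimal primes over $I'$ exist in any ring.
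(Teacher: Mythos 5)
Your proposal is correct and follows essentially the same route as the paper: part (1) via realizing $K\otimes_k R$ as a localization of the polynomial ring $R[\alpha_i, i\in I]$, part (2) by reducing to (1) through the identification $R'/PR' \cong K\otimes_k(R/P)$ (flatness of $k \to K$), and part (3) by sandwiching the prime $(P'\cap R)R'$ between $I'$ and $P'$ and invoking minimality. Your write-up merely spells out details (the flatness argument and the image of $K\otimes_k P$ being the extended ideal) that the paper leaves implicit.
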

\begin{proof}
\begin{enumerate}
\item Let $A=k[\alpha_i,i\in I]$, then $A \otimes_k R$ is a domain because it is a polynomial ring over $R$, and $K\otimes_kR$ is a localization of $A\otimes_kR$. 
\item Apply (1) to $K\otimes_k(R/P)=K\otimes_kR/K\otimes_kP$.
\item By previous lemma, $(P'\cap R)R'$ is a prime ideal. Since $I \subset I' \subset P'$, $I \subset P'\cap R
$, $I'=IR' \subset (P'\cap R)R'$, so by definition of minimal primes, $P'=(P'\cap R)R'$. 
\end{enumerate}    
\end{proof}
Since $k$ is arbitrary, even if the ideal contains some indeterminates, we can still get rid of the rest indeterminates. To be precise, we have:
\begin{lemma}\label{3.5}
Let $k$ be a field, $I_1,I_2$ be two disjoint sets of indices, $K=k(\alpha_i,i \in I_1 \cup I_2)$. Let $I$ be an ideal of $K\otimes_k R$ generated by elements in $k(\alpha_i,i \in I_1)\otimes_k R$ and $P \in \Min(I)$, then $P$ can be generated by elements in $k(\alpha_i,i\in I_1)\otimes_k R$.  
\end{lemma}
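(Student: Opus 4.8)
The plan is to reduce this to \Cref{3.4}(3) by changing the base field. The essential observation is that the two disjoint families of indeterminates allow us to absorb the first family into the ground field: set $k' = k(\alpha_i, i \in I_1)$, and note that because $I_1$ and $I_2$ are disjoint and all the $\alpha_i$ are algebraically independent over $k$, the remaining family $\{\alpha_i : i \in I_2\}$ is still a set of indeterminates over $k'$. Hence $K = k(\alpha_i, i \in I_1 \cup I_2) = k'(\alpha_i, i \in I_2)$ is a purely transcendental extension of $k'$ of the shape required by \Cref{3.4}.

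Next I would set up the tensor-product identifications carefully. Write $S = k' \otimes_k R = k(\alpha_i, i \in I_1) \otimes_k R$, viewed as a $k'$-algebra. Using associativity of tensor products together with $K = k' \otimes_{k'} K$, I get
$$
K \otimes_k R = K \otimes_{k'} (k' \otimes_k R) = K \otimes_{k'} S,
$$
so $K \otimes_k R$ is exactly the base change of the $k'$-algebra $S$ along the purely transcendental extension $k' \subset K$. This is precisely the situation of \Cref{3.4}, now applied with ground field $k'$, indeterminates $\{\alpha_i : i \in I_2\}$, and algebra $S$ in place of $R$.

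The key step is then to recognize the given ideal as an extended ideal from $S$. By hypothesis $I$ is generated by elements of $k(\alpha_i, i \in I_1) \otimes_k R = S$; let $J$ be the ideal of $S$ generated by that same set of generators. Since these generators already lie in $S$, extending them to $K \otimes_{k'} S$ gives $I = J \cdot (K \otimes_{k'} S)$, i.e. $I$ is the extension of $J$ along the base change. Now $P \in \Min(I) = \Min(J (K \otimes_{k'} S))$, so \Cref{3.4}(3) applies verbatim and yields $P = (P \cap S)(K \otimes_{k'} S)$. This exhibits $P$ as generated by $P \cap S \subset S = k(\alpha_i, i \in I_1) \otimes_k R$, which is the claim.

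I do not expect a genuine obstacle here; the content of the lemma is really the bookkeeping that the first family of indeterminates can be folded into the base field so that \Cref{3.4} becomes directly applicable. The only points demanding care are the tensor-product identification $K \otimes_k R \cong K \otimes_{k'} S$ and the verification that $\{\alpha_i : i \in I_2\}$ remains algebraically independent over $k'$, both of which are routine but must be stated so that the hypotheses of \Cref{3.4} are genuinely met.
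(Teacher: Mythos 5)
Your proof is correct and is essentially the paper's own argument: the paper likewise observes that $k(\alpha_i,i\in I_1)(\alpha_i,i\in I_2)=K$ and $K\otimes_{k(\alpha_i,i\in I_1)}\bigl(k(\alpha_i,i\in I_1)\otimes_k R\bigr)=K\otimes_k R$, then applies \Cref{3.4}. You have simply written out the bookkeeping (the ideal $J$ in $S$ and the identification $I=J(K\otimes_{k'}S)$) that the paper leaves implicit.
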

\begin{proof}
Observe that $k(\alpha_i,i\in I_1)(\alpha_i,i\in I_2)=k(\alpha_i,i\in I_1\cup I_2)$ and $k(\alpha_i,i\in I_1\cup I_2)\otimes_{k(\alpha_i,i\in I_1)} k(\alpha_i,i\in I_1)\otimes_k R=k(\alpha_i,i\in I_1\cup I_2)\otimes_k R$ and apply Lemma 3.4.    
\end{proof}
We recall the following definitions on ring homomorphisms.
\begin{definition}
Let $\varphi: S_1\to S_2$ be a ring homomorphism. We say $\varphi$ is
\begin{enumerate}
\item free, if $S_2$ is a free $S_1$-module.
\item split, if $\varphi$ is injective and $S_2\cong\varphi(S_1)\oplus M$ as $S_1$-modules for some $S_1$-module $M$.
\item pure, if $M \otimes \varphi:M \to M \otimes_{S_1}S_2$ is injective for any $S_1$-module $M$.
\item cyclic pure, if $IS_2 \cap S_1=I$ for any $S_1$-ideal $I$.
\end{enumerate}
\end{definition}
It is well-known that free implies faithfully flat and split, faithfully flat or split implies pure and pure implies cyclic pure. Here are two examples of cyclic pure extensions used in our later proof.
\begin{lemma}\label{3.7cyclicpurelemma1}
\begin{enumerate}
\item Let $R$ be a $G$-graded ring, $G' \subset G$ be a subgroup, then $R_{G'} \to R$ splits.
\item Let $R$ be a $k$-algebra, $L$ be a field extension of $k$, then $R \to R\otimes_k L$ is free.
\end{enumerate}    
\end{lemma}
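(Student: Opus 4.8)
The plan is to treat the two parts separately, since each reduces to exhibiting an explicit direct-sum decomposition and matching it against the definitions in \Cref{3.7cyclicpurelemma1}'s preceding definition of split and free.

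For part (1), I would use the coset decomposition of $G$ modulo $G'$. Write $G = \bigsqcup_{c \in G/G'} c$ as a disjoint union of cosets, and for each coset $c$ set $R_c = \bigoplus_{g \in c} R_g$, so that $R = \bigoplus_{c \in G/G'} R_c$ as abelian groups. The key point is that each $R_c$ is an $R_{G'}$-submodule: if $g' \in G'$ and $g \in c$, then $R_{g'} R_g \subset R_{g'+g}$, and $g'+g$ again lies in the coset $c$, so $R_{G'} R_c \subset R_c$. The coset of $0$ is $G'$ itself, and the corresponding summand is exactly $R_{G'}$. Hence $R = R_{G'} \oplus M$ with $M = \bigoplus_{c \neq G'} R_c$ an $R_{G'}$-module complement, so the inclusion $R_{G'} \to R$ is split in the sense of the definition.

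For part (2), I would use that $L$ is a $k$-vector space, hence free as a $k$-module. Fixing a $k$-basis $\{e_i\}_{i \in I}$ of $L$, chosen so that some $e_{i_0} = 1$, we have $L = \bigoplus_i k e_i$, and since $R \otimes_k -$ commutes with direct sums, $R \otimes_k L = \bigoplus_i R\,(1 \otimes e_i)$ is a free $R$-module on the elements $1 \otimes e_i$. Thus $R \otimes_k L$ is free over $R$, which is precisely what the definition of free requires; the structure map $R \to R \otimes_k L$ sends $1$ to the basis element $1 \otimes e_{i_0}$, so it is in fact the inclusion of a free summand.

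There is no serious obstacle here, as both statements are formal consequences of direct-sum decompositions. The only point requiring care is the module-structure claim in part (1), namely that multiplication by $R_{G'}$ preserves each coset summand; this follows immediately from the grading law $R_g R_h \subset R_{g+h}$ together with the fact that adding an element of $G'$ does not change the coset. For part (2) the only minor subtlety is that freeness of the target over the source is all that is demanded, so the choice of a basis containing $1$ is not strictly necessary, although making it shows that this map is split as well.
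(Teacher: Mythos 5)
Your proposal is correct and follows essentially the same route as the paper: for (1) the paper also decomposes $R$ over coset representatives of $G/G'$, observes that each coset piece $\oplus_{g \in G'}R_{g+h}$ is an $R_{G'}$-module, and identifies the $h=0$ piece with $R_{G'}$; for (2) the paper likewise reduces to the fact that the free map $k \to L$ stays free after the base change $R \otimes_k -$, which is exactly your explicit-basis argument. No gaps; the extra remark that a basis containing $1$ makes (2) split is true but not needed.
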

\begin{proof}
\begin{enumerate}
\item Let $\Lambda$ be a set of representatives of $G/G'$, then $R=\oplus_{h \in \Lambda}\oplus_{g \in G'}R_{g+h}$. We see for each $h \in \Lambda$, $\oplus_{g \in G'}R_{g+h}$ is an $R_{G'}$-module, and when $h=0$, $\oplus_{g \in G'}R_{g+h}=R_{G'}$.
\item $k \to L$ is free, so after a base change it is still free.
\end{enumerate}    
\end{proof}
We prove that for a cyclic pure map, Noetherianity passes from target to source:
\begin{lemma}\label{3.8cyclicpurelemma2}
Let $R \to S$ be a cyclic pure map. If $S$ is Noetherian, then $R$ is Noetherian. 
\end{lemma}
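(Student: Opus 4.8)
The plan is to use Cohen's criterion (recalled as the first lemma of this section): a commutative ring is Noetherian exactly when every prime ideal is finitely generated. So it suffices to show that every prime $P$ of $R$ is finitely generated, and in fact I would prove the stronger statement that \emph{every} ideal $I$ of $R$ is finitely generated by exploiting the defining property $IS \cap R = I$ of cyclic purity.

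First I would fix an ideal $I \subset R$. Since $S$ is Noetherian, the extended ideal $IS$ is a finitely generated ideal of $S$, say $IS = (s_1,\dots,s_m)S$. Each generator $s_j$ lies in $IS$, so it can be written as a finite $S$-linear combination $s_j = \sum_k a_{jk} b_{jk}$ with $a_{jk} \in I$ and $b_{jk} \in S$. Collecting all the elements $a_{jk}$ that appear gives a \emph{finite} subset $\{c_1,\dots,c_N\} \subset I$, and I claim the finitely generated subideal $J = (c_1,\dots,c_N)R$ already equals $I$. The key observation is that by construction $IS \subseteq JS$: indeed each $s_j$ is an $S$-combination of the $c_i$, so $(s_1,\dots,s_m)S \subseteq JS$, while trivially $JS \subseteq IS$ since $J \subseteq I$; hence $JS = IS$.

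Now I invoke cyclic purity twice. Applying the hypothesis $JS \cap R = J$ and using $JS = IS$ together with $IS \cap R = I$, I get
\[
J = JS \cap R = IS \cap R = I.
\]
Thus $I = J$ is finitely generated. Since $I$ was arbitrary, every ideal of $R$ is finitely generated, so $R$ is Noetherian. (One could equally restrict attention to prime ideals $P$ and invoke Cohen's lemma, but the argument costs nothing extra for a general ideal, so I would state it in full generality.)

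The main thing to be careful about is the direction in which cyclic purity is applied: the definition gives $IS \cap R = I$ for every ideal $I$ of $R$, and the whole argument rests on applying this to the \emph{constructed} finitely generated ideal $J$ to recover $J = JS \cap R$, rather than assuming anything about $I$ beyond its extension. There is no genuine obstacle here — the finite generation of $IS$ in $S$ is where Noetherianity of $S$ enters, and the equality $JS = IS$ is what lets cyclic purity collapse $I$ onto a finitely generated subideal. The only subtlety worth double-checking is that the generators $c_i$ may be pulled from the $S$-expansions of the $s_j$ and genuinely lie in $I$ (they do, as coefficients drawn from $I$), so that $J \subseteq I$ and the chain of equalities is legitimate.
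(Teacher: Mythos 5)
Your proof is correct. It differs from the paper's argument in mechanism: the paper verifies the ascending chain condition directly, noting that for any ascending chain $\{I_n\}$ in $R$ the extended chain $\{I_nS\}$ stabilizes in the Noetherian ring $S$, and then cyclic purity ($I_nS\cap R=I_n$) forces the original chain to stabilize. You instead prove finite generation of an arbitrary ideal $I$ by extracting, from $S$-expansions of a finite generating set of $IS$, finitely many elements $c_1,\dots,c_N\in I$ whose span $J$ satisfies $JS=IS$, and then collapsing $I$ onto $J$ via $J = JS\cap R = IS\cap R = I$. Both arguments are elementary and rest on the same contraction property; the paper's is shorter, while yours is more constructive, exhibiting explicit generators for each ideal of $R$ and making transparent exactly where Noetherianity of $S$ enters (finite generation of the extension $IS$). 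One small remark: your final chain of equalities invokes cyclic purity for both $J$ and $I$, but you only need it for $J$, since $I\subseteq IS\cap R = JS\cap R = J\subseteq I$ already closes the loop; this is cosmetic and does not affect correctness.
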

\begin{proof}
For any ascending chain of ideals $\{I_n\}$ of $R$, its extension $\{I_nS\}$ is also an ascending chain of ideals of $S$, so stabilizes. But $R \to S$ being cyclic pure implies $I_nS\cap R=I_n$ for any $n$, so $\{I_n\}$ also stabilizes. So $R$ is Noetherian.   
\end{proof}
\begin{lemma}\label{3.9}
Let $R$ be a domain, $K=\Frac(R)$ be its fraction field. Let $t$ be an indeterminate, $f(t) \in R[t]$ is a monic polynomial with coefficients in $R$ which is irreducible in $K[t]$. Then $S=R[t]/(f(t))$ is a domain. It is finite free over $R$; in particular, it is integral over $R$. 
\end{lemma}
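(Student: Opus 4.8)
The plan is to exploit the monicity of $f$ in two ways. First, the division algorithm by a monic polynomial works verbatim over the domain $R$, not merely over the field $K$, and this immediately yields the finite free structure together with integrality. Second, comparing this division over $R$ with division over $K$ lets me embed $S$ into the field $K[t]/(f(t))$, which forces $S$ to be a domain.

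First I would settle the finite free claim. Write $n=\deg f$. Since $f$ is monic, long division shows that every $g\in R[t]$ can be written as $g=qf+r$ with $q,r\in R[t]$ and $\deg r<n$; moreover this representation is unique, because if $qf+r=q'f+r'$ with both remainders of degree $<n$, then $(q-q')f=r'-r$, and if $q\neq q'$ the left side has degree $\geq n$ (the leading coefficient of $f$ is $1$, so no cancellation occurs) while the right side has degree $<n$, a contradiction. Hence the images of $1,t,\dots,t^{n-1}$ form a free $R$-basis of $S=R[t]/(f(t))$, so $S$ is finite free over $R$. Since $t$ satisfies the monic relation $f(t)=0$, it is integral over $R$, and $S=R[\,\overline{t}\,]$ is generated by an integral element, so $S$ is integral over $R$; this also follows formally from finiteness of the module.

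Next I would prove $S$ is a domain by realizing it as a subring of a field. Since $K[t]$ is a PID and $f$ is irreducible in $K[t]$, the ideal $f(t)K[t]$ is maximal, so $K[t]/(f(t))$ is a field. The inclusion $R[t]\hookrightarrow K[t]$ induces a ring map $S=R[t]/(f(t))\to K[t]/(f(t))$, and it suffices to show this map is injective, i.e.\ that $f(t)K[t]\cap R[t]=f(t)R[t]$. The inclusion $\supseteq$ is clear. For $\subseteq$, take $g\in R[t]$ with $g\in f(t)K[t]$. Dividing over $R$ gives $g=qf+r$ with $q,r\in R[t]$ and $\deg r<n$; this equation also holds in $K[t]$, so by uniqueness of division by the monic polynomial $f$ in $K[t]$ it is the division of $g$ by $f$ in $K[t]$. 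But $g\in f(t)K[t]$ means the $K[t]$-remainder of $g$ is $0$, whence $r=0$ and $g=qf\in f(t)R[t]$. Therefore $S$ embeds into the field $K[t]/(f(t))$ and is a domain.

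The only subtle point is this last step, the compatibility of division by $f$ over $R$ and over $K$; it hinges entirely on the fact that dividing by a monic polynomial stays inside $R[t]$ and produces a uniquely determined remainder, so the $R$-division and $K$-division of any $g\in R[t]$ agree. Everything else is routine bookkeeping, and all three assertions (domain, finite free, integral) fall out of the monicity of $f$.
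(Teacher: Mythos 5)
Your proof is correct, and its skeleton matches the paper's: both establish finite freeness from monicity and then realize $S$ inside the field $K[t]/(f(t))$. The difference is in how the embedding is justified. The paper disposes of it in one line: since $S$ is free over $R$, the localization map $S\to K\otimes_R S$ is injective, and $K\otimes_R S$ is canonically $K[t]/(f(t))$; the domain property then falls out of abstract base change. You instead prove the injectivity by hand, showing the ideal-contraction equality $f(t)K[t]\cap R[t]=f(t)R[t]$ via the compatibility of division by a monic polynomial over $R$ and over $K$. What the paper's route buys is brevity and a conceptual mechanism (free $\Rightarrow$ torsion-free $\Rightarrow$ injects into the fraction-field base change) that recurs elsewhere in the paper, e.g.\ in the cyclic purity arguments of \Cref{2.7}. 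What your route buys is self-containedness and an explicit, slightly stronger statement: you identify exactly the kernel of $R[t]\to K[t]/(f(t))$, without needing to know that tensoring commutes with the quotient or that free modules are torsion-free. Both are complete proofs; yours is the more elementary, the paper's the more structural.
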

\begin{proof}
Since $f(t)$ is monic, $S$ is finite free over $R$. So, $S$ embeds into $K\otimes_R S=K[t]/(f(t))$ which is a field, hence $S$ is a domain.    
\end{proof}
\begin{remark}
The converse of the above lemma also holds; if $f(t) \in R[t]$ is monic and $S=R[t]/(f(t))$ is a domain, then $f(t)$ is irreducible in $K[t]$. However, not every irreducible polynomial in $K[t]$ is a constant multiple of a monic polynomial in $R[t]$; for example, consider $R=k[x^2,x^3]$ and $f(t)=t-x \in \operatorname{Frac}(R)[t]$.
\end{remark}
\begin{lemma}\label{3.11}
Let $K$ be a field and $p \in \mathbb{N}$ be a prime number. Assume $\car(k)=p$ or $\car(k)=0$. Let $\alpha \in K$, and assume all $p$-th roots of unity in an algebraic closure of $K$ lie in $K$. Then either the polynomial $t^p-\alpha$ has a root in $K$, or it is irreducible in $K$.
\end{lemma}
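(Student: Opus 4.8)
The plan is to prove the dichotomy directly by establishing irreducibility under the assumption that no root exists. So suppose $t^p - \alpha$ has no root in $K$; in particular $\alpha \neq 0$, since otherwise $0$ would be a root. I would then argue by contradiction, assuming a nontrivial monic factorization $t^p - \alpha = g(t)h(t)$ in $K[t]$ with $\deg g = d$ and $0 < d < p$, and derive that $\alpha$ must after all be a $p$-th power in $K$.

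First I would pass to an algebraic closure $\overline{K}$ and fix a single root $\beta$, so that $\beta^p = \alpha$. The key structural observation is that every root of $t^p-\alpha$ has the form $\zeta\beta$ for a $p$-th root of unity $\zeta$: when $\car(K)=0$ these are the $p$ distinct roots obtained from the (by hypothesis) $K$-rational $p$-th roots of unity, while when $\car(K)=p$ one has $t^p-\alpha=(t-\beta)^p$ and the only $p$-th root of unity is $1$. In either case the root-of-unity hypothesis guarantees that every such $\zeta$ lies in $K$, and this single formulation lets me treat both characteristics uniformly.

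The central computation concerns the constant term of $g$. Since $g$ is a monic divisor of $t^p-\alpha$ in $\overline{K}[t]$, it splits as $g(t)=\prod_{j=1}^{d}(t-\zeta_j\beta)$ for certain $p$-th roots of unity $\zeta_j$, so its constant term equals $\pm\left(\prod_{j}\zeta_j\right)\beta^{d}$. This constant term lies in $K$ because $g\in K[t]$, and the factor $\prod_{j}\zeta_j$ lies in $K$ by hypothesis; hence $\beta^{d}\in K$. Finally I would exploit $\gcd(d,p)=1$, which holds because $0<d<p$ with $p$ prime: choosing integers $a,b$ with $ad+bp=1$ gives $\beta=(\beta^{d})^{a}(\beta^{p})^{b}=(\beta^{d})^{a}\alpha^{b}\in K$ (all factors being nonzero elements of the field $K$). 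Thus $\beta\in K$ is a root of $t^p-\alpha$, contradicting the standing assumption, and therefore $t^p-\alpha$ is irreducible.

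The only point requiring genuine care is the uniform handling of the two characteristic cases. I expect the characteristic-$p$ case — where $t^p-\alpha$ has a single root of multiplicity $p$ and the root-of-unity hypothesis is essentially vacuous — to be the one meriting an explicit remark, but the constant-term argument absorbs it without modification, since there $\zeta_j=1$ for all $j$ and the conclusion $\beta^{d}\in K$ is immediate. Everything else is routine, so no step beyond this bookkeeping presents a real obstacle.
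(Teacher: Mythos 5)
Your proposal is correct, but it takes a genuinely different route from the paper's. The paper splits into cases by characteristic and reads off the \emph{subleading} coefficient (the sum of the roots) of a hypothetical factor: in characteristic $p$ the factor is $(t-\beta)^i$ and that coefficient is $\pm i\beta$ with $i$ invertible, while in characteristic $0$ it is $\pm\beta\sum_{i\in A}\xi^i$, and the paper must invoke irreducibility of the $p$-th cyclotomic polynomial over $\mathbb{Q}$ to know this subset sum is nonzero; in both cases $\beta\in K$ falls out directly. You instead read off the \emph{constant} term, $\pm\bigl(\prod_j\zeta_j\bigr)\beta^{d}$, where the product of roots of unity is automatically a nonzero $p$-th root of unity lying in $K$, so you obtain only $\beta^{d}\in K$ and must add a Bezout step (using $\gcd(d,p)=1$, valid since $0<d<p$ and $p$ is prime) to recover $\beta\in K$. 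The trade is worthwhile: your argument is uniform in the characteristic and replaces the cyclotomic-irreducibility input with the trivial multiplicativity of roots of unity. Notably, vanishing of subset sums of roots of unity is exactly the obstruction the paper identifies in Remark 4.6 (e.g. $\xi^3+\xi^2+1=0$ for a $7$-th root of unity in characteristic $2$); your multiplicative argument is immune to it, and if you additionally raised the constant term to the $p$-th power to kill $\prod_j\zeta_j$, you would recover the classical statement that $t^p-\alpha$ is irreducible over \emph{any} field in which $\alpha$ is not a $p$-th power, with no root-of-unity hypothesis at all. So your method also reveals that the limitation discussed in Remark 4.6 concerns the paper's proof technique rather than the root-or-irreducible dichotomy itself.
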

\begin{proof}
Let $\beta$ be a root of $t^p-\alpha$ in an algebraic closure of $K$. 

If $\car (k)=p$, then $t^p-\alpha=(t-\beta)^p$. Any nontrivial factor of this polynomial is of the form $(t-\beta)^i$ for $1 \leq i \leq p-1$. Thus if $t^p-\alpha$ is reducible in $K$, then for some $1 \leq i \leq p-1$, $(t-\beta)^i \in K[t]$. The coefficient of $t^{i-1}$ is $i\beta$, so $i\beta \in K$. But $0 \neq i \in K$, so $\beta \in K$, so $t^p-\alpha$ has a root in $K$. 

If $\car(k)=0$, let $\xi \in K$ be a $p$-th root of unity which is not $1$. Then $\xi^i,0\leq i \leq p-1$ is pairwise distinct and $t^p-\alpha=\Pi_{0 \leq i \leq p-1}(t-\xi^i\beta)$. $\xi$ satisfies a polynomial $(t^p-1)/(t-1)$ which is irreducible over $\mathbb{Q}$. Therefore, if there is $A \subset \{0,1,\ldots,p-1\}$ such that $\sum_{i\in A}\xi^i=0$, then either $A=\{0,1,\ldots,p-1\}$ or $A=\emptyset$. If $t^p-\alpha$ is irreducible in $K$, then there is a set $A \neq \{0,1,\ldots,p-1\},\emptyset$ such that $\Pi_{i\in A}(t-\xi^i\beta) \in K(t)$. The coefficient of $t^{i-1}$ is $\sum_{i\in A}\xi^i\beta$ where $\sum_{i\in A}\xi^i$ is a nonzero element in $K$, so $\beta \in K$.   
\end{proof}
\begin{lemma}
There is a sequence of prime numbers $p_i,i \in \mathbb{N}$ such that $\mathbb{Q}=\underset{}{\varinjlim}_i \frac{1}{\Pi_{1 \leq j \leq i}p_j}\mathbb{Z}$.    
\end{lemma}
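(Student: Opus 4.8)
The plan is to recognize the direct limit as an increasing union of subgroups of $\mathbb{Q}$ and then to choose the sequence $\{p_i\}$ so that every prime divides the partial products $\prod_{1 \leq j \leq i}p_j$ with unbounded multiplicity. The transition maps in the system are the inclusions $\frac{1}{\prod_{1 \leq j \leq i}p_j}\mathbb{Z} \hookrightarrow \frac{1}{\prod_{1 \leq j \leq i+1}p_j}\mathbb{Z}$, so the direct limit is simply the union $\bigcup_i \frac{1}{\prod_{1 \leq j \leq i}p_j}\mathbb{Z}$, a subgroup of $\mathbb{Q}$. Since this union is visibly contained in $\mathbb{Q}$, the content of the lemma is the reverse inclusion: every rational number should lie in one of the groups $\frac{1}{\prod_{1 \leq j \leq i}p_j}\mathbb{Z}$.

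A rational number written in lowest terms as $a/b$ lies in $\frac{1}{N}\mathbb{Z}$ precisely when $b \mid N$. Taking $N = \prod_{1 \leq j \leq i}p_j$, this requires that for each prime $q$ dividing $b$ with multiplicity $m$, the prime $q$ occur at least $m$ times among $p_1,\ldots,p_i$. Hence it suffices to arrange the sequence so that each prime number appears in it infinitely often; then for any fixed $b$ we may take $i$ large enough that the partial product contains every prime factor of $b$ to at least the required multiplicity.

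To build such a sequence I would enumerate all primes as $q_1 < q_2 < q_3 < \cdots$ and list them in blocks, the $n$-th block being $q_1, q_2, \ldots, q_n$; concretely the sequence begins $q_1;\, q_1, q_2;\, q_1, q_2, q_3;\, \ldots$. Each prime $q_k$ appears once in every block of index $\geq k$, so it occurs infinitely often, and its total multiplicity in $\prod_{1 \leq j \leq i}p_j$ tends to infinity as $i \to \infty$. Given $a/b$, factor $b = \prod_k q_k^{m_k}$ and choose $i$ at the end of a block large enough that each occurring $q_k$ has appeared at least $m_k$ times; then $b \mid \prod_{1 \leq j \leq i}p_j$, so $a/b$ lies in the union. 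This gives $\mathbb{Q} \subset \bigcup_i \frac{1}{\prod_{1 \leq j \leq i}p_j}\mathbb{Z}$ and hence equality.

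There is no serious obstacle here beyond the bookkeeping; the only point to get right is that \emph{repetitions} of primes must be allowed in the sequence, so that the exponent of each prime in the partial products can grow without bound. Indeed, a sequence of distinct primes would only produce rationals with squarefree denominators and would miss elements such as $1/4$, whereas the block enumeration handles the general denominator cleanly.
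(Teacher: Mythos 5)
Your proof is correct and takes essentially the same approach as the paper: the paper also constructs a sequence in which every prime occurs infinitely often (via an anti-diagonal enumeration of an $\mathbb{N}\times\mathbb{N}$ table of primes, which yields exactly your block enumeration up to the order within blocks) and concludes that the partial products eventually absorb any denominator. Your write-up just makes explicit the identification of the direct limit with the increasing union and the divisibility bookkeeping that the paper leaves implicit.
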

\begin{proof}
We construct a sequence of prime numbers as follows: first, put all the prime numbers in a column in the increasing order. Then we create $\mathbb{N}$-many copies of the column, creating a $\mathbb{N}*\mathbb{N}$-table:
\[\begin{tikzcd}
	2 & 2 & 2 \\
	3 & 3 \\
	5
	\arrow[from=1-1, to=2-1]
	\arrow[from=2-1, to=1-2]
	\arrow[from=1-2, to=3-1]
	\arrow[from=3-1, to=2-2]
	\arrow[from=2-2, to=1-3]
\end{tikzcd}\]
Then we rearrange all the numbers one to one corresponding to $\mathbb{N}$ according to the order given by the arrow above. We run through all of them anti-diagonally: we first run through the number on the first anti-diagonal, then the numbers on the second anti-diagonal upwards, and so on. This gives us a sequence running through all prime numbers infinitely times. The first several terms of the sequence is
$p_1=2,p_2=3,p_3=2,p_4=5,p_5=3,p_6=2,p_7=7,\ldots$. As abelian groups, $\mathbb{Q}=\underset{}{\varinjlim}_i \frac{1}{\Pi_{1 \leq j \leq i}p_j}\mathbb{Z}$.    
\end{proof}

\section{Constructions of Noetherian $G$-graded rings}
Now we are ready to construct our examples of Noetherian $G$-graded rings. Let $k$ be a field of characteristic $0$. We fix a natural number $1 \leq d \in \mathbb{N}$. Let $\alpha_{1,i},\ldots,\alpha_{d,i}$ be indeterminates over $k$, $K=k(\alpha_{j,i},1\leq j \leq d,i \in \mathbb{N})$, $R_i=K[t_{1,i},\ldots,t_{d,i},t_{1,i}^{-1},\ldots,t_{d,i}^{-1}]$ be a Laurant polynomial ring of $d$-variables over $K$, and $\{p_i\}$ a sequence of prime numbers $p_i,i \in \mathbb{N}$ such that $\mathbb{Q}=\underset{}{\varinjlim}_i \frac{1}{\Pi_{1 \leq j \leq i}p_j}\mathbb{Z}$ for all $i \in \mathbb{N}$. Let $\phi_i:R_i \to R_{i+1}$ which maps $t_{j,i}$ to $\alpha_{j,i+1}^{-1}t_{j,i+1}^{p_i}$ and keeps elements in $K$. 
\begin{theorem}\label{4.1 main example}
Assume $k$ contains all roots of unity. Let $R=\underset{}{\varinjlim} R_i$ through the map $\phi_i$. Then $R$ is a $\mathbb{Q}^d$-graded Noetherian field. 
\end{theorem}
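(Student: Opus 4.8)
The plan is to dispatch the grading and the graded-field property quickly, then concentrate on Noetherianity, which is the real content.

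First I would fix the grading. Declaring $\deg t_{j,i}=\tfrac{1}{p_1\cdots p_{i-1}}e_j\in\Q^d$ makes each $R_i$ a $\Q^d$-graded ring with $\Supp(R_i)=\tfrac{1}{p_1\cdots p_{i-1}}\Z^d$, and a one-line check that $\phi_i(t_{j,i})=\alpha_{j,i+1}^{-1}t_{j,i+1}^{p_i}$ again has degree $\tfrac{1}{p_1\cdots p_{i-1}}e_j$ shows each $\phi_i$ is degree-preserving. Hence $R=\varinjlim R_i$ is $\Q^d$-graded with $\Supp(R)=\bigcup_i\tfrac{1}{p_1\cdots p_{i-1}}\Z^d=\Q^d$, the last equality because the $p_i$ run through every prime infinitely often. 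Since the nonzero homogeneous elements of a Laurent polynomial ring over a field are exactly the scalar multiples of monomials, which are units, each $R_i$ is a graded field; and every nonzero homogeneous element of $R$ is represented by a homogeneous element of some $R_i$ (the transition degree maps being injective), which is a unit there. Thus $R$ is a $\Q^d$-graded field, and a domain by \Cref{2.4}(3).

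For Noetherianity I would invoke Cohen's criterion and show every prime $P\subset R$ is finitely generated. The tower $\phi_i$ is integral: each $t_{j,i+1}$ satisfies the monic equation $T^{p_i}-\alpha_{j,i+1}t_{j,i}$ over the image of $R_i$. Writing $P_i=P\cap R_i$, \Cref{3.3} reduces everything to producing a single $n$ with $P_nR_i$ prime for all $i\ge n$: then $P=P_nR$ is the unique prime over $P_n$ and is finitely generated since $R_n$ is Noetherian. By the incomparability of primes over a fixed prime in an integral extension, it suffices to show $P_iR_{i+1}$ is prime for all large $i$, for then the primes stabilize, $P_{i+1}=P_iR_{i+1}$, and one gets $P_nR_i$ prime for all $i\ge n$. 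Now $R_{i+1}/P_iR_{i+1}$ is built from the domain $R_i/P_i$ by successively adjoining $p_i$-th roots of the elements $c_j=\alpha_{j,i+1}\bar t_{j,i}$; since $\car k=0$ and $k$ contains all roots of unity, \Cref{3.11} together with \Cref{3.9} guarantees that each adjunction keeps the ring a domain precisely when $c_j$ is not a $p_i$-th power in the field obtained so far.

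The main obstacle is exactly this non-$p_i$-th-power condition, because a priori the variable $\alpha_{j,i+1}$ could already be entangled in $P_i$, hence in $\Frac(R_i/P_i)$. The way around it is the genericity built into the construction, formalized by \Cref{3.5}. Choosing $N$ so that $P_1$ is defined over $K_N:=k(\alpha_{j,l}:l\le N)$, I would note that $P_i\in\Min(P_1R_i)$ (integral extension) and that $P_1R_i$ is generated by elements of $K_i[t_{j,i}^{\pm1}]$ for $i\ge N$, since the transition maps only introduce $\alpha_{j,l}$ with $l\le i$. Then \Cref{3.5} forces $P_i$ itself to be defined over $K_i$, so by \Cref{3.4} the variable $\alpha_{j,i+1}$ is a genuine transcendental over $\Frac(R_i/P_i)=E_i(\alpha_{j,l}:l>i)$ for some field $E_i$.

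Finally I would cash in this freshness. The roots adjoined before the $j$-th one involve only the distinct variables $\alpha_{j',i+1}$ with $j'<j$, so they and $\bar t_{j,i}$ lie in a subfield $M'$ over which $\alpha_{j,i+1}$ is still transcendental; in the rational function field $M'(\alpha_{j,i+1})$ the element $c_j=\bar t_{j,i}\,\alpha_{j,i+1}$ has valuation $1$ at the $\alpha_{j,i+1}$-adic place, which is not divisible by $p_i$, so $c_j$ is not a $p_i$-th power. This yields primality of $P_iR_{i+1}$ for all $i\ge N$, and the argument closes via \Cref{3.3} and Cohen's criterion.
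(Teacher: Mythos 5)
Your proposal is correct and follows essentially the same route as the paper's proof: Cohen's criterion together with \Cref{3.3} for the integral tower, \Cref{3.4} and \Cref{3.5} to keep $\alpha_{j,i+1}$ transcendental modulo $P_i$, and \Cref{3.11} with \Cref{3.9} plus a divisibility count on $T^{p_i}-\alpha t$ to show each root adjunction preserves domains. Your repackagings—deducing stabilization $P_{i+1}=P_iR_{i+1}$ from uniqueness of primes over $P_i$ rather than running one induction from a fixed base level, and using the $\alpha_{j,i+1}$-adic valuation instead of comparing $\alpha$-degrees of $f^{p_i}=t\alpha g^{p_i}$—are cosmetic, and your grading $\deg t_{j,i}=\tfrac{1}{p_1\cdots p_{i-1}}e_j$ is the consistent version of the degree assignment the paper intends.
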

\begin{proof}
We see $R_i$ is Noetherian for any $i$. By Cohen's lemma, to prove $R$ is Noetherian, it suffices to show that for any $P \in \operatorname{Spec}(R)$, $P=P_nR$ for some $P_n \in \Spec(R_n)$. Let $P$ be a prime ideal of $R$ and $P_i=P\cap R_i,i\in \mathbb{N}$. Note that $R_1$ is Noetherian, so $P_1$ is finitely generated by Laurent polynomials in $t_{1,1},\ldots,t_{d,1}$ with finitely many coefficients in $K$. By checking these coefficients, there exists $n$ such that all the coefficients of these generators lie in $K'=k(\alpha_{j,i},1\leq j \leq d,1\leq i \leq n)$. We claim $P_nR_i$ is prime for any $i \geq n$. We see all the maps $\phi_i$ are finite free extensions, so they are integral. Therefore, the map $R_1 \to R_n$ is integral. Now $P_n$ is a prime ideal in $R_n$ with $P_1=P_n\cap R$, so $P_n \in \Min(P_1R_n)$. Let $R'_n=K'[t_{1,n},\ldots,t_{d,n},t_{1,n}^{-1},\ldots,t_{d,n}^{-1}]$. Since $P_1$ is generated by Laurent polynomials of $t_{1,1},\ldots,t_{d,1}$ in coefficients in $K'$ and the morphisms $\phi_i,1\leq i \leq n-1$ map polynomials with coefficients in $K'$ still to polynomials with coefficients in $K'$, $P_1R_n$ is generated by Laurent polynomials in $t_{1,n},\ldots,t_{d,n}$ with coefficients in $K'$, that is, generated by elements in $R'_n$. Thus by \Cref{3.5}, $P_n$ is generated by elements in $R'_n$. Now we claim $P_nR_i$ is prime for any $i \geq n+1$. By induction we only need to prove the case $i=n+1$, because if $P_n$ is generated by Laurent polynomial in $t_{1,n},\ldots,t_{d,n}$ with coefficients in $K'=k(\alpha_{j,i},1\leq j \leq d,1\leq i \leq n)$, then $P_nR_{n+1}$ is generated by Laurent polynomial in $t_{1,n+1},\ldots,t_{d,n+1}$ with coefficients in $K'=k(\alpha_{j,i},1\leq j \leq d+1,1\leq i \leq n+1)$, so we can use induction to prove $P_nR_i=P_nR_{n+1}R_i$ is prime for $i \geq n+2$. We let $R_{c,n}=\phi_n(R_n)[t_{1,n+1},\ldots,t_{c,n+1}]$ for $1 \leq c \leq d$ and set $R_{0,n}=R_n$. Note that this implies $t_{1,n+1}^{-1},\ldots,t_{c,n+1}^{-1} \in R_{c,n}$. Then $R_{n+1}=R_{d,n}$ and we extend the ring homomorphism $R_n \to R_{n+1}$ to a finer chain of rings with finite free ring homomorphisms:
$R_{0,n}=R_n \to R_{1,n} \to R_{2,n} \to \ldots \to R_{d,n}=R_{n+1}$. It suffices to prove $P_nR_{c,n}$ is prime in $R_{c,n}$ for any $0 \leq c \leq d$. The case $c=0$ is true by assumption. We assume $P_nR_{c,n}$ is a prime ideal in $R_{c,n}$, and prove $P_nR_{c+1,n}$ is a prime ideal in $R_{c+1,n}$.
$R_{c,n}=K[t_{1,n},\ldots,t_{d,n},t_{1,n}^{-1},\ldots,t_{d,n}^{-1},t_{1,n+1},\ldots,t_{c,n+1}]$ is a Laurent polynomial ring over $K$, $R_{c+1,n}=K[t_{1,n},\ldots,t_{d,n},t_{1,n}^{-1},\ldots,t_{d,n}^{-1},t_{1,n+1},\ldots,t_{c+1,n+1}]$ is a Laurent polynomial ring over $K$ which contains $R_{c,n}$ as a subring. As $R_{c,n}$-algebra, $R_{c+1,n+1}=R_{c,n}[T]/(T^{p_i}-\alpha_{c+1,n+1}t_{c+1,n})$. Let $K''=k(\alpha_{j,i},1\leq j \leq d,1\leq i \leq n$ or $1 \leq j \leq c,i=d+1)$, $R''_{c,n}=K''[t_{1,n},\ldots,t_{d,n},t_{1,n}^{-1},\ldots,t_{d,n}^{-1},t_{1,n+1},\ldots,t_{c,n+1}]$,  $P''=R''_{c,n}\cap P_nR_{c,n}$. Since $P_n$ is generated by Laurent polynomials with coefficients in $K'$, $P_nR_{c,n}$ is generated by Laurent polynomials with coefficients in $K''$, that is, $P_nR_{c,n}$ is generated by Laurent polynomials in $R''_{c,n}$, so $P_nR_{c,n}=P''R_{c,n}$. Now
\begin{align*}
R_{c+1,n}/P_nR_{c+1,n}\\
=(R_{c,n}/P_nR_{c,n})[T]/(T^{p_i}-\alpha_{c+1,n+1}t_{c+1,n})\\
=(K\otimes_{K''}(R''_{c,n}/P''))[T]/(T^{p_i}-\alpha_{c+1,n+1}t_{c+1,n}).    
\end{align*}
We denote $D=R''_{c,n}/P'$ and $L=\operatorname{Frac}(D)$. Then $D$ is a $K''$-domain and $K$ is a purely transcendental extension of $K''$. By \Cref{3.4}, $K\otimes_{K''}(R''_{c,n}/P')$ is a $K$-domain. It has characteristic $0$ and contains all roots of unity. We want to prove the quotient ring $(K\otimes_{K''}(R''_{c,n}/P''))[T]/(T^{p_i}-\alpha_{c+1,n+1}t_{c+1,n})$ is a domain. Let $\alpha=\alpha_{c+1,n+1}$, which is a free variable over $L$. We see
\begin{align*}
(K\otimes_{K''}(R''_{c,n}/P''))[T]/(T^{p_i}-\alpha_{c+1,n+1}t_{c+1,n})\\
=K\otimes_{K''(\alpha)}(K''(\alpha)\otimes_{K''}(R''_{c,n}/P''))[T]/(T^{p_i}-\alpha_{c+1,n+1}t_{c+1,n}).    
\end{align*}
By \Cref{3.4}, it suffices to prove
$(K''(\alpha)\otimes_{K''}(R''_{c,n}/P''))[T]/(T^{p_i}-\alpha_{c+1,n+1}t_{c+1,n})$ is a domain. We see $K''(\alpha)\otimes_{K''}(R''_{c,n}/P'')$ is a domain, so by 
\Cref{3.9}, it suffices to prove the polynomial $T^{p_i}-\alpha_{c+1,n+1}t_{c+1,n}$ does not have a root in $\operatorname{Frac}(K''(\alpha)\otimes_{K''}(R''_{c,n}/P''))=L(\alpha)$. 

Let $\alpha=\alpha_{c+1,n+1}$ and $t=t_{c+1,n}$, the polynomial is of the form $T^{p_i}-\alpha t \in L(\alpha)[T]$ for $t \in K''$. Since $t$ is invertible in all rings and their localizations, $t \neq 0$ in $L$. Suppose $0 \neq f(\alpha)/g(\alpha)\in L(\alpha)$ is a root of this polynomial, then $(f(\alpha)/g(\alpha))^{p_i}-\alpha t=0$ implies $f(\alpha)^{p_i}=t\alpha g(\alpha)^{p_i}$. Comparing the degrees of $\alpha$ on both sides, we get $p_i\deg(f)=p_i\deg(g)+1$. This contradicts the fact that $p_i$ does not divide $1$. This means the polynomial $T^{p_i}-\alpha t \in L(\alpha)[T]$ does not have a root. So $L(\alpha)[T]/(T^{p_i}-\alpha t)$ is a domain, thus $(K\otimes_{K''}(R''_{c,n}/P''))[T]/(T^{p_i}-\alpha_{c+1,n+1}t_{c+1,n})$ is a domain and $P_nR_{c+1,n}$ is a prime ideal in $R_{c+1,n}$. By induction, $P_nR_{d,n}=P_nR_{n+1}$ is a prime ideal in $R_{n,d}=R_{n+1}$ and the claim is proved, so $P_nR_i$ is a prime ideal in $R_i$ for any $i \geq n$ and by \Cref{3.3}, the unique prime of $R$ lying over $P_n$ is $P_nR$, so it is finitely generated, and by Cohen's Lemma, $R$ is Noetherian. 

Let $\{e_j,1\leq j \leq d\}$ be a $\mathbb{Q}$-basis of $\mathbb{Q}^d$. We define a grading on $R_n$ by setting $\deg(t_{j,i})=1/\Pi_{1 \leq i' \leq i}p_{i'}e_j$. Then $\phi_n$'s are homogeneous maps which make $R$ a graded ring graded by the group $\underset{}{\varinjlim}_{i}\oplus_{1 \leq j \leq d}1/\Pi_{1 \leq i' \leq i}p_{i'}e_j=\mathbb{Q}^d$. We see every homogeneous element of $R$ is invertible, so $R$ is a graded field.
\end{proof}
We will denote the ring constructed in \Cref{4.1 main example} by $R(k,d)$ which depends on two parameter $d \in \mathbb{N}$ and a field $k$.
\begin{theorem}
If $R(k,d)$ is Noetherian, then $R(k,d)$ is a regular ring of dimension $d$.   
\end{theorem}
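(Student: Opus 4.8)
The plan is to exploit the colimit structure: $R = R(k,d) = \varinjlim_i R_i$ is a filtered colimit of the Laurent polynomial rings $R_i = K[t_{1,i},\dots,t_{d,i},t_{1,i}^{-1},\dots,t_{d,i}^{-1}]$, each of which is a localization of a polynomial ring over the field $K$, hence \emph{regular of Krull dimension $d$}. The transition maps $\phi_i$ are free, so each composite $R_n \to R$ is free (faithfully flat) and integral, and $R$ is flat over $R_n$ since flatness passes to filtered colimits. The dimension claim is then immediate: an integral extension preserves Krull dimension, so $\dim R = \dim R_n = d$. (Concretely, every chain of primes in $R$ contracts to a chain of the same length in $R_n$ by incomparability, and conversely lifts by going-down, which holds because $R_n \to R$ is flat.)

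For regularity, since $R$ is assumed Noetherian it suffices to show that $R_P$ is a regular local ring for every $P \in \Spec R$. Fix such a $P$ and set $P_i = P \cap R_i$. The decisive observation is that Noetherianity forces $P$ to be extended from a finite level: $P$ is finitely generated, so choosing $n$ large enough that a finite generating set of $P$ lies in $R_n$ gives $P = (f_1,\dots,f_k)R \subseteq P_n R \subseteq P$, whence $P = P_n R$. Localizing the flat inclusion $R_n \hookrightarrow R$ at $P$ produces a flat local homomorphism $(R_n)_{P_n} \to R_P$ (local because $P \cap R_n = P_n$, flat because flatness is preserved under localization and colimit). Here $(R_n)_{P_n}$ is a localization of the regular ring $R_n$, hence regular of dimension $h := \operatorname{ht}(P_n)$, so its maximal ideal $P_n(R_n)_{P_n}$ is generated by a regular system of parameters $a_1,\dots,a_h$.

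I would then conclude directly from the definition of a regular local ring. Because $P = P_n R$, the maximal ideal of $R_P$ is $PR_P = P_n R_P = P_n(R_n)_{P_n}R_P$, which is therefore generated by the images of $a_1,\dots,a_h$; thus $\dim R_P \le h$ by Krull's height theorem. On the other hand, going-down along the flat map $R_n \to R$ gives $\operatorname{ht}(P) \ge \operatorname{ht}(P_n) = h$, so $\dim R_P = h$. A Noetherian local ring whose maximal ideal is generated by $\dim R_P$ elements is regular, so $R_P$ is regular. (Equivalently, one may invoke the standard transfer along a flat local homomorphism $A \to B$ of Noetherian local rings, where $B$ is regular as soon as $A$ is regular and the closed fiber is regular: here the closed fiber is $R_P/P_n R_P = R_P/PR_P = \kappa(P)$, a field.) As $P$ was arbitrary, $R$ is a regular ring of dimension $d$.

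The step carrying the real content is the identification $P = P_n R$, which collapses the closed fiber of $(R_n)_{P_n} \to R_P$ to the residue field $\kappa(P)$ and lets regularity descend from the finite level; this is exactly where the Noetherian hypothesis is used, in the same spirit as the primality analysis in the proof of \Cref{4.1 main example}. Everything else—flatness of the colimit, preservation of dimension under integral and flat extensions, and the regular-system-of-parameters criterion—is routine. The one technical point to verify with care is that both flatness and localness genuinely survive passage to the colimit and then to the localization, so that the flat local machinery (in particular going-down for the height equality) legitimately applies.
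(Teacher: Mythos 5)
Your proof is correct and takes essentially the same route as the paper's, which is a one-line sketch ("regular because it is a limit of regular rings, and of dimension $d$ because it is integral over $R_1$"); you have simply supplied the details that sketch leaves implicit, namely that Noetherianity forces $P=P_nR$ for some finite level $n$, so that regularity descends along the flat local map $(R_n)_{P_n}\to R_P$ whose closed fiber is the field $\kappa(P)$. One minor overstatement: the colimit $R$ is flat over $R_n$ (a filtered colimit of free modules) but need not be free; since your argument only uses flatness, integrality, and faithful flatness (which follows from flatness plus lying over), this does not affect anything.
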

\begin{proof}
It is regular because it is a limit of regular rings, and it has dimension $d$ because it is a integral extension of $R_1$ whose dimension is $d$.
\end{proof}
\begin{remark}    
We can give a concrete representation of $R(k,d)$. Let $R_1=K[t_{1,1},\ldots,t_{1,d}]$. Then inside an algebraic closure of $\Frac (R_1)$, $R(k,d)$ is the $R_1$-algebra generated by the following elements:
$$\alpha_{j,2}^{1/p_1}t_{j,1}^{1/p_1},\alpha_{j,3}^{1/p_2}\alpha_{j,2}^{1/p_1p_2}t_{j,1}^{1/p_1p_2},\alpha_{j,4}^{1/p_3}\alpha_{j,3}^{1/p_2p_3}\alpha_{j,2}^{1/p_1p_2p_3}t_{j,1}^{1/p_1p_2p_3},\ldots,1\leq j \leq d.$$
\end{remark}
\begin{corollary}\label{4.4}
Let $k$ be any field of characteristic $0$, then $R(k,d)$ is Noetherian.
\end{corollary}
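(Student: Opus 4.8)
The plan is to deduce the general case directly from \Cref{4.1 main example} by a base-change argument: enlarge $k$ to a field containing all roots of unity so that the theorem already applies, and then descend Noetherianity along a faithfully flat extension. Concretely, I would first choose a field extension $\bar k \supseteq k$ that contains all roots of unity, for instance an algebraic closure of $k$. Since $\car k = 0$ we have $\car \bar k = 0$ and $\bar k$ contains all roots of unity, so \Cref{4.1 main example} applies verbatim and $R(\bar k, d)$ is a Noetherian $\mathbb{Q}^d$-graded field. The whole task then reduces to transferring Noetherianity from $R(\bar k, d)$ down to $R(k,d)$.

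The mechanism for this transfer is the observation that the coefficient field $K = k(\alpha_{j,i})$ is \emph{fixed} throughout the construction; only the Laurent variables $t_{*,i}$ and the transition maps $\phi_i$ vary with $i$, and each $\phi_i$ is defined over $K$. Writing $\bar K = \bar k(\alpha_{j,i})$, one therefore has $\bar R_i = \bar K \otimes_K R_i$ for every $i$, compatibly with the $\phi_i$. Since tensor product commutes with direct limits, this yields $R(\bar k, d) = \varinjlim \bar R_i = \bar K \otimes_K R(k,d)$. Now $R(k,d)$ is a $K$-algebra and $\bar K$ is a field extension of $K$, so \Cref{3.7cyclicpurelemma1}(2) shows the structure map $R(k,d) \to R(k,d)\otimes_K \bar K = R(\bar k, d)$ is free, hence in particular cyclic pure. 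As $R(\bar k, d)$ is Noetherian, \Cref{3.8cyclicpurelemma2} then forces $R(k,d)$ to be Noetherian, completing the argument.

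The one delicate point, and the step I would verify most carefully, is the identification $R(\bar k, d) = \bar K \otimes_K R(k,d)$. This depends on the fact that passing from $K$ to $\bar K$ alters only the constant field and leaves the variables $t_{*,i}$ untouched, so that each $\bar R_i$ is genuinely the full base change $\bar K \otimes_K R_i$ rather than merely a localization of it; the compatibility of the $\phi_i$ with this base change is what lets the tensor product commute past the direct limit. Once that identification is secured, the freeness of \Cref{3.7cyclicpurelemma1}(2) and the descent of \Cref{3.8cyclicpurelemma2} conclude the proof, and no further appeal to the roots-of-unity hypothesis is needed.
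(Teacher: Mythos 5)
Your proof is correct and takes essentially the same route as the paper: identify the ring obtained from a coefficient field containing all roots of unity (so that \Cref{4.1 main example} applies) as a free base change of $R(k,d)$, and then descend Noetherianity along this free, hence cyclic pure, map via \Cref{3.7cyclicpurelemma1}(2) and \Cref{3.8cyclicpurelemma2}. The only (harmless, in fact slightly advantageous) deviation is that you tensor over $K=k(\alpha_{j,i})$ rather than over $k$ as the paper does, which makes the identification $R(\bar k,d)=\bar K\otimes_K R(k,d)$ hold for an arbitrary extension $\bar k\supseteq k$, whereas the paper's identification $R(L,d)=R(k,d)\otimes_k L$ is only valid when $L$ is algebraic over $k$.
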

\begin{proof}
Let $L$ be an extension of $k$ containing all roots of unity. Since the constructions in Theorem 4.1 only involve the indeterminates $\alpha_i$ and do not involve any element in $k$, $R(L,d)=R(k,d)\otimes_k L$ is Noetherian, hence $R(k,d)$ is Noetherian by \Cref{3.7cyclicpurelemma1} and \Cref{3.8cyclicpurelemma2}. 
\end{proof}
\begin{example}
We can also construct similar examples in prime characteristic. Take a field $k$ with $\car(k)=p$. Then if in the above example we replace all $p_i$ by $p$, we get a Noetherian ring graded over $\mathbb{Z}[1/p]^d$ where $\deg(t_{j,i})=(1/p^i)e_j$.    
\end{example}
\begin{remark}
The existence of a Noetherian ring graded over $\Z[1/q]$ of characteristic $p$ for $q \neq p$ is more complicated. In order to apply the techniques in \Cref{4.1 main example}, we need a corresponding version of \Cref{3.11}, which asks for the fact that the minimal polynomial of a $q$-th root of unity over $\mathbb{F}_p$ has degree $q-1$. Equivalently, $\min\{r \in \mathbb{N}:q|p^r-1\}=q-1$, which means $\bar{p} \in \mathbb{F}^*_q$ is a primitive element. Otherwise, it is possible to find a proper subset $A \subset \{0,1,\ldots,p-1\}$ where $\sum_{i \in A}\xi^i=0$, so \Cref{3.11} fails. For example, when $p=2,q=7$, $\xi$ is a $7$-th root of unity over $\overline{\mathbb{F}}_2$ which lies in $\mathbb{F}_8$ and satisfies $\xi^3+\xi^2+1=0$.   
\end{remark}
\begin{example}
In \Cref{4.1 main example}, $R(k,d)$ is a graded field, so $R(k,d)_0=K$ is a field. We can easily construct an example of graded ring $R$ where $R_0$ is not a field by forgetting some degrees: we see $R=R(k,d+d')$ is $\mathbb{Q}^{d+d'}$-graded. Consider the projection $\pi: \mathbb{Q}^{d+d'} \to \mathbb{Q}^d$ which ignores the last $d'$ components. Then $S=R^\pi$ is a $\mathbb{Q}^d$-graded ring with $S_0=R_{\ker \pi}\cong R(k,d')$ which is a Noetherian ring of dimension $d'$.
\end{example}
\begin{corollary}\label{4.8}
Let $k$ be a field of characteristic $0$ and $G$ be a torsion-free abelian group of finite rank. Then there is a $G$-graded field $R$ with $\Supp(R)=G$ and $R_0=k(\alpha_i,i \in \mathbb{N})$.    
\end{corollary}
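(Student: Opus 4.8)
The plan is to realize $G$ as a subgroup of $\Q^d$ and then restrict the graded field $R(k,d)$ to the components indexed by $G$. Set $d=\rank_\Z G$. Since $G$ is torsion-free of finite rank, the canonical map $G \to \Q\otimes_\Z G$ is injective and its target is a $d$-dimensional $\Q$-vector space; choosing a basis identifies $\Q\otimes_\Z G$ with $\Q^d$, so I may regard $G$ as a subgroup of $\Q^d$. Let $F=R(k,d)$, which by \Cref{4.4} is a Noetherian $\Q^d$-graded field with $\Supp(F)=\Q^d$ and $F_0=k(\alpha_{j,i},1\le j\le d,\,i\in\mathbb{N})$.

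Set $R=F_G=\oplus_{g\in G}F_g$, and check the three structural properties. Because $G$ is a subgroup of $\Q^d$, the fact that the restriction of a graded field to a subgroup is again a graded field (\cite{MR676974gradedringtheory}, I.4.2) shows that $R$ is a $G$-graded field. Since $\Supp(F)=\Q^d$, every homogeneous component $F_g$ with $g\in G$ is nonzero, so $\Supp(R)=G$. The degree-zero part is unchanged, $R_0=F_0=k(\alpha_{j,i},1\le j\le d,\,i\in\mathbb{N})$, which is a purely transcendental extension of $k$ in countably many indeterminates, hence isomorphic to $k(\alpha_i,i\in\mathbb{N})$.

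It remains to establish Noetherianity. Here I would use that the inclusion $R=F_G\hookrightarrow F$ splits as a map of $F_G$-modules by \Cref{3.7cyclicpurelemma1}(1), and is therefore cyclic pure; since $F$ is Noetherian, \Cref{3.8cyclicpurelemma2} yields that $R$ is Noetherian.

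The proof is essentially an assembly of the results already developed, so I do not anticipate a serious obstacle. The only points requiring care are the identification of $G$ with a subgroup of $\Q^d$ — which is immediate from the definition of rank for a torsion-free group, once one checks that $d=\rank_\Z G$ is exactly the dimension for which $G$ embeds into $\Q^d$ — and the observation that passing from the $\Q^d$-grading of $F$ to the $G$-grading of $F_G$ preserves both the support and the degree-zero part.
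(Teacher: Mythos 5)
Your proposal is correct and follows the same route as the paper's own proof: embed $G$ into $\Q^d$ via $G \hookrightarrow \Q\otimes_\Z G$, take $R = R(k,d)_G$, deduce Noetherianity from the splitting of $R(k,d)_G \hookrightarrow R(k,d)$ (\Cref{3.7cyclicpurelemma1}) together with \Cref{3.8cyclicpurelemma2}, and note $\Supp(R)=G$ and $R_0=k(\alpha_{j,i})\cong k(\alpha_i,i\in\mathbb{N})$ after reindexing. The only (harmless) addition is your explicit citation of the fact that restricting a graded field to a subgroup yields a graded field, which the paper leaves implicit.
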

\begin{proof}
Suppose $\rank_\mathbb{Z}G=d$, then we can realize $G$ as a subgroup of $\mathbb{Q}^d$. We see $R(k,d)_G$ is Noetherian by \Cref{3.7cyclicpurelemma1} and \Cref{3.8cyclicpurelemma2}. We have $\Supp(R(k,d)_G)=G$ because $R(k,d)_g \neq 0$ for any $g \in \mathbb{Q}^d$ and $(R(k,d)_G)_0=k(\alpha_{i,j})=k(\alpha_i)$ after reindexing $\alpha_i$'s.  
\end{proof}

In the case $d=1$ and $k$ is uncountable, we prove that $R(k,1)$ is a PID which is not ED. By constructions in \Cref{4.1 main example}, we set $K=k(\alpha_i,i \in \mathbb{N})$, $R_i=K[t_i,t^{-1}_i]$, $\phi_i:R_i\to R_{i+1},t_i \to \alpha_i^{-1}t_{i+1}^{p_i}$, and $R(k,1)=\varinjlim R_i$. Denote $G_i=1/p_1p_2\ldots p_i\mathbb{Z}$, and $R_{G_i}=R_i$. We have $R_{i+1}=R_i[T]/(T^{p_i}-\alpha_it_i)$.

We recall the well-known universal side divisor criterion for non-ED-ness:
\begin{proposition}
Let $R$ be a ED. Then there is a nonzero prime element $p \in R$ such that the natural projection $\pi:R \to R/pR$ satisfies $\pi(R^*)=(R/pR)^*$, where $^*$ denotes the group of units.    
\end{proposition}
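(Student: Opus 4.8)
The plan is to prove the classical \emph{universal side divisor} criterion (Motzkin). Throughout I assume $R$ is not a field, since otherwise $R$ has no nonzero prime elements and the conclusion is vacuous; in the intended application $R(k,1)$ is not a field, so this costs nothing. Recall that $R$ being an ED forces $R$ to be a PID, hence a UFD, and fix a Euclidean function $N\colon R\setminus\{0\}\to\mathbb{N}$ with the division property: for $a\in R$ and $b\neq 0$ there are $q,r$ with $a=qb+r$ and either $r=0$ or $N(r)<N(b)$. The strategy has two halves: first produce a universal side divisor, then upgrade it to a prime with the required property by passing to a quotient.

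First I would isolate a universal side divisor. Since $R$ is not a field, the set of $N$-values of nonzero non-units is a nonempty subset of $\mathbb{N}$; by well-ordering let $u$ be a nonzero non-unit attaining the minimum value $m$. For an arbitrary $x\in R$, dividing $x$ by $u$ gives $x=qu+r$ with $r=0$ or $N(r)<m$. By minimality of $m$, an element $r$ with $N(r)<m$ cannot be a nonzero non-unit, so $r\in\{0\}\cup R^*$. Writing $\rho\colon R\to R/uR$ for the reduction, this says exactly that $R/uR=\{\rho(0)\}\cup\rho(R^*)$: every residue class is zero or the image of a unit.

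The main obstacle is that this minimal element $u$ need not itself be prime, while the statement demands a prime $p$. I would clear this purely by a quotient argument (thereby avoiding the usual reduction to a multiplicative Euclidean function $N(a)\le N(ab)$). Since $u$ is a non-unit, $(u)$ is a proper ideal and hence is contained in a maximal ideal, which in the PID $R$ has the form $(p)$ for a prime element $p$ (equivalently, take any prime factor $p\mid u$). The inclusion $(u)\subseteq(p)$ yields a surjection $q\colon R/uR\twoheadrightarrow R/pR$ compatible with the reductions, i.e. $q\circ\rho=\pi$, where $\pi$ is the projection from the statement. Applying $q$ to the description $R/uR=\{\rho(0)\}\cup\rho(R^*)$ shows that every class of $R/pR$ is $0$ or lies in $\pi(R^*)$, so $(R/pR)\setminus\{0\}\subseteq\pi(R^*)$.

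Finally I would close the loop using that $R$ is a PID: the prime $p$ generates a maximal ideal, so $R/pR$ is a field and therefore $(R/pR)^*=(R/pR)\setminus\{0\}$. Combining the inclusion from the previous paragraph with the trivial inclusion $\pi(R^*)\subseteq(R/pR)^*$ (images of units are units) yields $(R/pR)^*=\pi(R^*)$, which is precisely the asserted equality. The only genuinely delicate step is the middle passage from the universal side divisor $u$ to the prime $p$; the rest is the standard division-with-remainder bookkeeping together with the elementary facts that an ED is a PID and that primes in a PID are maximal.
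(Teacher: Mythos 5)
Your proof is correct. There is nothing in the paper to compare it against: the author states this proposition as the ``well-known universal side divisor criterion'' and gives no proof, so your argument supplies what the paper delegates to the literature. Your route is the standard Motzkin argument --- take a nonzero non-unit $u$ of minimal Euclidean value, so that division by $u$ leaves only remainders in $\{0\}\cup R^*$, i.e.\ $R/uR=\{\rho(0)\}\cup\rho(R^*)$ --- combined with a clean quotient trick: instead of arguing that $u$ itself may be taken prime (which usually requires first normalizing the Euclidean function, e.g.\ replacing it by a multiplicative one), you pass to a prime divisor $p$ of $u$ and push the description of $R/uR$ forward along the surjection $R/uR\twoheadrightarrow R/pR$, using $q\circ\rho=\pi$. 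The final step via maximality of $(p)$ in the PID $R$ is fine; in fact it is slightly more than you need, since $R/pR=\{0\}\cup\pi(R^*)$ together with $0\notin(R/pR)^*$ (note $R/pR\neq 0$ because $p$ is not a unit) already gives $(R/pR)^*\subseteq\pi(R^*)$, and the reverse inclusion is automatic. One further point in your favor: as literally stated the proposition is false when $R$ is a field (an ED with no nonzero prime elements), and your standing assumption that $R$ is not a field --- harmless for the intended application, since $R(k,1)$ has Krull dimension $1$ --- is exactly the right repair of the paper's formulation.
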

\begin{proposition}\label{4.10}
If $k$ is uncountable, then $R=R(k,1)$ is a PID, but not ED.    
\end{proposition}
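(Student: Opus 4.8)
The plan is to prove the two assertions separately: first that $R=R(k,1)$ is a PID, and then that it fails to be Euclidean by exhibiting a prime with no universal side divisor, as set up by the preceding proposition. For the PID claim, I already know from \Cref{4.1 main example} and \Cref{4.4} that $R$ is Noetherian, and from the subsequent theorem that it is regular of dimension $1$, hence a Noetherian regular domain of Krull dimension $1$, i.e.\ a Dedekind domain. To upgrade from Dedekind to PID it suffices to show the class group is trivial, or equivalently that every maximal ideal is principal. I would exploit the graded-field structure: $R$ is a $\mathbb{Q}$-graded field, so every \emph{homogeneous} nonzero element is a unit, and the interesting ideals come from the inhomogeneous part. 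Concretely, each $R_i=K[t_i,t_i^{-1}]$ is a localization of a PID (a Laurent polynomial ring in one variable over a field), and $R=\varinjlim R_i$ is a directed union of these PIDs under the integral extensions $\phi_i$. Any ideal $P$ of $R$ satisfies $P=P_n R$ for some $P_n\subset R_n$ by the Noetherianity argument; since $P_n$ is generated by a single element in the PID $R_n$ (up to the localization at $t_n$), I would conclude $P=fR$ is principal.

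**Next I would** address the non-ED part, which is where the uncountability of $k$ enters and where the real content lies. By the universal side divisor criterion (the stated proposition), it suffices to exhibit a nonzero prime element $f\in R$ such that the projection $R\to R/fR$ does \emph{not} carry the units of $R$ onto the units of $R/fR$. The natural candidate for $f$ is built from one of the generators, say $f = t_1 - c$ for a suitably chosen constant $c\in k$ (or $c\in K$), so that $R/fR$ is a field into which I can compute the image of $R^*$. The group of units $R^*$ is large but controlled: since $R$ is a graded field, $R^*$ contains all nonzero homogeneous elements, and these are exactly the $K^*$-multiples of the degree-$g$ basis elements $x_g$ for $g\in\mathbb{Q}$; the inhomogeneous units are units of $R_0=K$ times homogeneous units, so effectively $R^*$ is generated by $K^*$ and the homogeneous generators $x_g$. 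The point of uncountability is a counting argument: the subgroup of $(R/fR)^*$ generated by the images of $K^*$ together with the images of the countably many generators $x_{1/(p_1\cdots p_i)}$ is "small" (of size $|k|$), while $(R/fR)^*$ is strictly larger, so the map on units cannot be surjective.

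**The hard part will be** making the counting argument precise, i.e.\ identifying $R/fR$ and the image of $R^*$ inside it explicitly enough to see non-surjectivity. I would first determine the residue field $R/fR$: reducing modulo $t_1-c$ collapses the whole tower, and because the generators satisfy $t_{i+1}^{p_i}=\alpha_i t_i$, specializing $t_1$ forces algebraic relations among the $\alpha_i^{1/p_j}$ and the images of the higher $t_i$, producing a field that is an infinite algebraic-type extension of $K$. I expect $R/fR$ to be identifiable with something like $K$ adjoined a compatible system of roots, and its unit group to be genuinely larger than the image of $R^*$. The cleanest route is likely to compute the \emph{value group} or a suitable quotient of $R^*$: the image of each unit is determined by its degree $g\in\mathbb{Q}$ together with its $K$-coefficient, and after reduction the degree information is partially lost, so I would track which residues are hit. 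Showing that some explicit element of $(R/fR)^*$ is not in the image amounts to showing it is not a $K$-multiple of a reduction of a homogeneous generator; here uncountability of $k$ guarantees the target unit group is uncountable while the image, being generated by $K^*$ and a countable set modulo the relations, remains of cardinality $|k|$ only if we are careful—so the decisive step is to verify that the image of $R^*$ in $(R/fR)^*/(\text{image of }K^*)$ is \emph{countable} while the full quotient is not. That cardinality gap, rather than an explicit element, is what I expect to do the work, and pinning down the two cardinalities against each other is the main obstacle.
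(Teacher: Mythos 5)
Your PID argument is fine and is essentially the paper's: every prime of $R$ is extended from some $R_n$ (this is exactly what the Noetherianity proof of \Cref{4.1 main example} establishes), each $R_n=K[t_n,t_n^{-1}]$ is a PID, and a domain in which every prime ideal is principal is a PID; the detour through ``regular of dimension one, hence Dedekind'' is harmless but unnecessary.

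The non-ED part, however, has a genuine logical gap: you have the quantifier in the universal side divisor criterion backwards. The criterion says that if $R$ \emph{is} an ED, then \emph{there exists} a nonzero prime $p$ with $\pi(R^*)=(R/pR)^*$. Its contrapositive therefore requires you to show that \emph{every} nonzero prime element $f$ of $R$ fails surjectivity of $R^*\to (R/fR)^*$; exhibiting a single prime $f=t_1-c$ where surjectivity fails proves nothing (in $\mathbb{Z}$, which is Euclidean, the map $\{\pm 1\}\to(\mathbb{Z}/5\mathbb{Z})^*$ is not surjective). This is precisely why the paper's proof begins with an arbitrary nonzero prime $f$ and spends its first paragraph on a reduction: intersecting $fR$ with the tower to write $fR\cap R_{r+1}=f_rR_{r+1}$ with coefficients in a finitely generated subfield $k(\alpha_1,\dots,\alpha_r)$, and --- crucially --- when the irreducible part of $f_r$ is \emph{linear} (your chosen case), pushing it one level up the tower so that it becomes $(\alpha_{r+1}t_{r+1}^{p_{r+1}}-c)t_{r+1}^{np_{r+1}}$, whose irreducible part has degree $p_{r+1}\geq 2$; after renaming, one may assume $f\in k[t_1,t_1^{-1}]$ with irreducible part of degree $\geq 2$, so that $L=K[t_1,t_1^{-1}]/(f)$ is a finite $K$-extension of degree $\geq 2$. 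Your key idea --- a countable union of $K$-lines cannot cover $L^*$ when $K$ is uncountable --- is exactly the paper's, and your description of $R^*$ as the homogeneous elements $\bigcup_i K^*t_i^{\mathbb{Z}}$ is correct (in fact all units are homogeneous, since $\mathbb{Q}$ is orderable, so there are no ``inhomogeneous units'' to worry about). But to run that count against an arbitrary $f$ you also need the paper's $\mathbb{Q}/\mathbb{Z}$-regrading step, which shows that any unit of $R$ lifting an element of $(R_1/fR_1)^*$ must already lie in $R_1$, i.e.\ be of the form $ct_1^n$; without this, units $ct_i^n$ from higher up the tower could a priori map onto the missing residues. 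So the correct skeleton is: arbitrary prime $\Rightarrow$ normalization up the tower $\Rightarrow$ degree-$\geq 2$ quotient $\Rightarrow$ grading argument confining relevant units to $R_1^*$ $\Rightarrow$ cardinality count; your proposal has only the last ingredient, applied to one prime.
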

\begin{proof}
A domain is PID if and only if every prime ideal is principal (for instance, see Exercise 8.2.6 of \cite{MR2286236DummitandFoote}). So it suffices to prove every prime ideal of $R$ is principal. But every prime ideal is extended from some $R_i$ and $R_i$ is principal, so $R$ is principal.   

By the universal side divisor criterion, to prove $R$ is not ED, it suffices to prove for any nonzero prime element $f \in R$, $R^* \to (R/fR)^*$ is not a surjection. 

First, we may assume $f \in K[t_1,t_1^{-1}]$, $f$ is not a linear function times a unit, and all the coefficients of $f$ are in $k$. In fact, $fR\cap R_1$ is a principal prime ideal; denote $fR\cap R_1=f_0R_1$. Then $f_0$ is an irreducible Laurent polynomial in $t_1$ with finitely many coefficients in $k(\alpha_i,i \in \mathbb{N})$, so there is an $r \in \mathbb{N}$ such that the coefficients of $f_0$ lie in $k(\alpha_i,1\leq i \leq r)$. Thus, we may look at $R_{r+1}$, then the image of $f_0$ in $R_{r+1}$ is a Laurent polynomial ring with coefficients in $k(\alpha_i,1\leq i \leq r)$. It may become reducible in $R_{r+1}$, but its factors still have coefficients in the field $k(\alpha_i,1\leq i \leq r)$, so $fR\cap R_{r+1}=f_rR_{r+1}$ where $f_r$ is an irreducible Laurent polynomial in $t_1$ with coefficients in $k(\alpha_i,1\leq i \leq r)$. There are two possibilities for $f_r$: if $f_r$ is a polynomial of degree at least $2$ times a power of $t_r$, then we stop here. If $f_r$ is a polynomial of degree $1$ times a power of $t_r$, say $f_r=(t_r-c)t_r^n$. Then we consider $f_{r+1}=(\alpha_{r+1}t^{p_{r+1}}_{r+1}-c)t_{r+1}^{np_{r+1}}$ which is the image of $f_r$ in $R_{r+2}$, then it is a polynomial of degree $p_{r+1}\geq 2$ times a power of $t_{r+1}$. In the first case $f_rR=fR$, so we can replace $f$ by $f_r$, $k$ by $k(\alpha_i,1\leq i \leq r)$, $t_i$ by $t_{i+r}$, $\alpha_i$ by $\alpha_{i+r}$. In the second case we can replace $f$ by $f_{r+1}$, $k$ by $k(\alpha_i,1\leq i \leq r+1)$, $t_i$ by $t_{i+r+1}$, $\alpha_i$ by $\alpha_{i+r+1}$. In either case, we reduce to the case where the coefficients of $f$ lie in $k$ and the irreducible part of $f$ has degree $\geq 2$.

Now observe that $R$ is a $\mathbb{Q}$-graded field, so $R^*$ consists of nonzero homogeneous elements of $R$, and any nonzero homogeneous element is a product of an element in $K^*$ and a power of some $t_i's$. In particular, $R_i^*=R^*\cap R_i=K^*\times t_i^{\mathbb{Z}}$.

In the definition of $R(k,1)$ we set $\deg(t_1)=1/p_1$. For simplicity, we regrade $R(k,1)$ again by multiplying $p_1$ on all degrees; in this case $\deg(t_1)=1$ and $R_1=K[t_1,t^{-1}_1]=R(k,1)_{\mathbb{Z}}$. Note that there is a natural surjection $\pi:\mathbb{Q} \to \mathbb{Q}/\mathbb{Z}$ of abelian groups. We can regrade $R$ via $\pi$ to get a $\mathbb{Q}/\mathbb{Z}$-graded ring $R^\pi$. Since $f \in k[t_1,t_1^{-1}]$, $f$ is homogeneous in $R^\pi$ of degree $0+\mathbb{Z}$, so $R/fR$ is a $\mathbb{Q}/\mathbb{Z}$-graded ring with $(R/fR)_0=R_1/fR_1=K[t_1,t_1^{-1}]/fK[t_1,t_1^{-1}]$. Now take $x \in (R_1/fR_1)^*$ and assume it lifts to $y \in R^*$. Then $y$ is also a homogeneous element in $R^\pi$, so $\deg(x)=\deg(y)=0+\mathbb{Z}$, which means $y \in R_{G_1}=R_1$. Thus there is $c \in K^*, n \in \mathbb{N}$ such that $y=ct_1^n$.

Now we write $L=K[t_1,t_1^{-1}]/fK[t_1,t_1^{-1}]$, we see it is a finite extension of $f$ of degree $n \geq 2$; this is because the irreducible part of $f$ has degree at least $2$. If $y \in R^*$ maps to $L^*$, then $y \in R_1^*$, thus it suffices to prove $R_1^* \to L^*$ is not surjective. Under this map,
$$\textup{Im}(R_1^*)=\cup_{i \in \mathbb{Z}}K^*\overline{t_1^{i}} \subset \cup_{i \in \mathbb{Z}}K\overline{t_1^{i}}.$$
And we have
$$\cup_{i \in \mathbb{Z}}K\overline{t_1^{i}} \subsetneq L$$
since $|\mathbb{P}_K^n|=|K|$ is uncountable, so $K^n$ is not a countable union of lines in it. So the map $R_1^* \to L$ is not surjective. But elements in $R_1^*$ cannot be mapped to $0$, so $R_1^* \to L^*$ is not surjective, and we are done.

\end{proof}

\section{Graded field and $S_2$-invariant group cohomology}
Let $k$ be a field and $F$ be a graded field supported on $G$ with $F_0=k$. Then we can choose $0 \neq x_g \in F_g$ such that $F=\oplus_{g \in G} kx_g$ as a $k$-vector space. Thus, the ring structure of $F$ is determined by the multiplicative structure on $x_g$'s. For $g,h \in G$, $0 \neq x_gx_h \in F_{g+h}$, thus there is a $c_{g,h} \in k^*$ such that $x_gx_h=c_{g,h}x_{g+h}$. It turns out that we can associate the collection of $k^*$-elements, $(c_{g,h})_{(g,h)\in G\times G}$, with a class in a group cohomology.

Before we discuss the graded fields, we would like to point out that a graded ring over a field whose component is a field has been studied in more general settings in various references. In commutative algebra, we typically encounter $G$-graded fields $F$ over a field $k$ where $G$ is an abelian group. There are several kinds of generalizations in the noncommutative setting:
\begin{enumerate}
\item We do not assume $F$ is commutative. That is, if $x_gx_h$ and $x_hx_g$ live in the same degree, then their coefficients $c_{g,h},c_{h,g}$ may not be the same.
\item We do not assume $G$ is abelian; in this case, $x_gx_h$ and $x_hx_g$ may live in different degrees.
\item We do not assume $k$ is in the center of $F$. That is, $cx_g\neq x_gc$.
\item We do not assume $k$ is commutative; $k$ may be a division ring.
\end{enumerate}
To deal with rings in settings (1) and (2), we only need $c_{g,h}$ to describe the ring structure. To deal with rings in setting (3), we need to introduce an extra parameter $\sigma: G \to \operatorname{Aut}(k)$ such that commuting $cx_g$ introduces a $\sigma(g)$-action on the coefficient $c$. It is usually convenient to rule out (4), since in this case the coefficient $k^*$ of a group cohomology is not an abelian group, bringing extra obstacles for calculation.

In the following discussion we only allow generalizations of type (1).
\begin{settings}
In sections 5 and 6, we always assume $G$ is abelian, $F$ is a $G$-graded division ring with $\operatorname{Supp}(F)=G$ such that $F_0$ is a commutative ring that lies in the center of $F$. When $F_0=k$, we say $F$ is a $G$-graded division ring over $k$. Moreover, if $F$ is commutative, then it is a $G$-graded field over $k$.  
\end{settings}

We recall the following definition and propositions of group cohomology. For basic knowledge on the group cohomology, see Chapter 6 of \cite{MR1269324weibelhomological}. In these definitions we use $+$ for group action since $G$ is assumed to be abelian.
\begin{definition}
Let $G$ be a group, $A$ be an abelian group where $G$ acts on $A$ additively. View $A$ as a $\mathbb{Z}G$-module via this action, and view $\mathbb{Z}$ as a $\mathbb{Z}G$-module where $G$ acts on $\mathbb{Z}$ trivially. Then the $i$-th cohomology group of $G$ with coefficients in $A$ is $H^i(G,A):=\operatorname{Ext}^i_{\Z G}(\Z,A)$.    
\end{definition}

\begin{proposition}[Bar resolution]
The trivial $\Z G$ module $\Z$ has the following resolution $B_\bullet=(B_n,d)$, called the (unnormalized) bar resolution:
\begin{enumerate}
\item $B_n$ is the free module over symbol $[g_1\otimes\ldots\otimes g_n]$.
\item The map $\epsilon:B_0 \to \Z$ maps $[]$ to $1$.
\item The map $d_i: B_n \to B_{n-1}$, $0 \leq i \leq n$ is given by
\begin{enumerate}
\item $d_0[g_1\otimes\ldots\otimes g_n]=g_1[g_2\otimes\ldots\otimes g_n]$.
\item $d_i[g_1\otimes\ldots\otimes g_n]=[g_1\otimes \ldots\otimes (g_i+g_{i+1})\otimes\ldots\otimes g_n]$ for $1 \leq i \leq n-1$.
\item $d_n[g_1\otimes\ldots\otimes g_n]=[g_1\otimes\ldots\otimes g_{n-1}]$.
\end{enumerate}
\item $d=\sum_{0 \leq i \leq n}(-1)^id_i: B_n \to B_{n-1}$.
\end{enumerate}
\end{proposition}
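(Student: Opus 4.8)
The plan is to show that the augmented sequence
\[
\cdots \to B_2 \xrightarrow{d} B_1 \xrightarrow{d} B_0 \xrightarrow{\epsilon} \Z \to 0
\]
is an exact sequence of $\Z G$-modules. Since each $B_n$ is by construction a free $\Z G$-module (hence projective), and since $B_0 \cong \Z G$ via $[\,]\mapsto 1$ with $\epsilon$ the usual augmentation, exactness of this sequence is exactly the statement that $B_\bullet$ is a free (in particular projective) resolution of the trivial module $\Z$, which is what one needs in order to compute $H^i(G,A)=\operatorname{Ext}^i_{\Z G}(\Z,A)$. Thus there are two things to establish: that $B_\bullet$ is a complex, and that it is exact.

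For the complex condition I would first record that the face operators $d_i$ satisfy the usual simplicial identities $d_i d_j = d_{j-1}d_i$ for $i<j$; these follow directly from the three defining cases, the only point requiring care being the identity involving the $G$-action built into $d_0$. Granting them, $d\circ d=\sum_{i,j}(-1)^{i+j}d_i d_j=0$ by the standard pairing argument that cancels the terms with $i<j$ against those with $i\ge j$ after reindexing. At the bottom of the complex one checks $\epsilon\circ d=0$ by hand: $d[g_1]=g_1[\,]-[\,]$, and since $\epsilon$ sends every element of the basis $G\subset \Z G$ to $1\in\Z$, we get $\epsilon(g_1[\,]-[\,])=1-1=0$.

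For exactness I would forget the $\Z G$-module structure and exhibit a contracting homotopy of the augmented complex regarded as a complex of abelian groups. As an abelian group, $B_n$ is free on the symbols $g_0[g_1\otimes\cdots\otimes g_n]$ with $g_0,\dots,g_n\in G$ (because $\Z G$ is free abelian on $G$), so I may define $\Z$-linear maps $s_{-1}\colon\Z\to B_0$ by $1\mapsto[\,]$ and $s_n\colon B_n\to B_{n+1}$ by $g_0[g_1\otimes\cdots\otimes g_n]\mapsto[g_0\otimes g_1\otimes\cdots\otimes g_n]$. The verification of $ds+sd=\mathrm{id}$ is then a term-by-term computation: the face $d_0$ applied to $s_n(x)$ reabsorbs the leading $g_0$ through the $G$-action and returns $x$, while each remaining face of $d\,s_n$ cancels against a matching term of $s_{n-1}\,d$ (the merging-faces pair off after a shift of index, and the two last-entry-dropping terms cancel by a sign). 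A null-homotopic identity map forces the augmented complex to be acyclic, hence exact.

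The main obstacle is purely the index bookkeeping in these two identities — the simplicial relations and especially $ds+sd=\mathrm{id}$ — where one must track carefully how the non-equivariant homotopy $s$ interacts with the $G$-action appearing in $d_0$; note that $s$ is only $\Z$-linear, which is harmless because exactness is a statement about the underlying abelian groups. Alternatively, since this is precisely the standard (unnormalized) bar resolution, the proposition may simply be cited from Chapter 6 of \cite{MR1269324weibelhomological}.
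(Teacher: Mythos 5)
Your proposal is correct: the verification that $d\circ d=0$ via the simplicial identities, together with the $\Z$-linear contracting homotopy $s_n\colon g_0[g_1\otimes\cdots\otimes g_n]\mapsto[g_0\otimes g_1\otimes\cdots\otimes g_n]$ (which need not be $\Z G$-equivariant, since exactness is a property of the underlying abelian groups), is the standard and complete argument, and your sign bookkeeping — $d_0s_n$ returning the identity, the merging faces cancelling after an index shift, and the two last-entry-dropping terms cancelling by sign — all checks out. The paper itself gives no proof of this proposition: it recalls it as a known fact, pointing to Chapter 6 of \cite{MR1269324weibelhomological}, and the proof given there is exactly the contracting-homotopy argument you reconstructed, so your write-up coincides with the intended (cited) proof rather than diverging from it.
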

\begin{remark}
As a consequence, $H^2(G,A)=H^2(\Hom_{\Z G}(B_\bullet,A))=Z^2(G,A)/B^2(G,A)$. We identify $Z^2(G,A),B^2(G,A)$ as subgroups of $\Hom_{\Z G}(B_2,A)=\Hom_{set}(G\times G,A)$, that is, the set of set-theoretic functions from $G\times G$ to $A$. Then:
\begin{enumerate}
\item $\varphi \in Z^2(G,A)$ if and only if $\varphi(1,g)=\varphi(g,1)$ and $f\cdot\varphi(g,h)-\varphi(f+g,h)+\varphi(f,g+h)-\varphi(f,g)=0$.
\item $\varphi \in B^2(G,A)$ if and only if $\varphi(1,g)=\varphi(g,1)$ and there exists $\beta:G \to A$ such that $\varphi(f,g)=f\beta(g)-\beta(f+g)+\beta(f)$.
\end{enumerate}
\end{remark}
\begin{remark}
For a possibly infinite collection of abelian groups $\{A_j\}_{j \in \Lambda}$, we see
$$\Hom_{\Z G}(B_2,\prod_j A_j)=\prod_j \Hom_{\Z G}(B_2,A_j)$$
and the conditions defining $Z^2, B^2$ can be verified componentwisely. Thus
$Z^2(G,\prod_{j \in \Lambda} A_j)=\prod_{j \in \Lambda}, Z^2(G,A_j),B^2(G,\prod_{j \in \Lambda} A_j)=\prod_{j \in \Lambda} B^2(G,A_j),H^2(G,\prod_{j \in \Lambda} A_j)=\prod_{j \in \Lambda} H^2(G,A_j)$. In particular, these functors commute with finite direct sum.    
\end{remark}
The following theorem shows that the structure of graded division ring supported on $G$, which is not necessarily commutative, corresponds to its second group cohomology:
\begin{theorem}[\cite{MR676974gradedringtheory}, I.4.7]\label{5.6}
Let $G$ be a group, $F_0$ be a field, $F=\oplus_{g \in G}kx_g$ be a $G$-graded field, $c:G\times G \to k^*$, $c(g,h)=c_{g,h}$ be a map of sets. We view $k^*$ as a $G$-module via trivial action.
\begin{enumerate}
\item Extending the map $x_g\times x_h=c_{g,h}x_{g+h}$ $k$-linearly gives $F$ a $G$-graded ring structure if and only if $c \in Z^2(G,k^*)$.
\item The two ring structures on $F$ given by $c,c' \in Z^2(G,k^*)$ are isomorphic as $G$-graded rings if and only if $c-c' \in B^2(G,k^*)$.
\item The set of isomorphism class of $G$-graded division ring $F$ supported on $G$ with $F_0=k$ is in one-to-one correspondence with the group cohomology $H^2(G,k^*)$.
\end{enumerate}
\end{theorem}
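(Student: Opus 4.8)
The plan is to establish the three parts in sequence, each by matching a ring-theoretic condition against the explicit descriptions of $Z^2(G,k^*)$ and $B^2(G,k^*)$ recorded in the Remark following the bar resolution; throughout, $k^*$ carries the trivial $G$-action, so every occurrence of $f\cdot(-)$ disappears. For part (1), the grading condition $F_gF_h\subset F_{g+h}$ is automatic since $x_gx_h\in kx_{g+h}$, and the $k$-bilinear extension of $x_gx_h=c_{g,h}x_{g+h}$ is automatically distributive, so the only ring axioms to verify are associativity and the existence of a two-sided identity. Expanding $(x_fx_g)x_h$ and $x_f(x_gx_h)$ and using that $x_{f+g+h}\neq 0$ together with the centrality of $k$, associativity becomes the identity $c_{f,g}c_{f+g,h}=c_{g,h}c_{f,g+h}$ for all $f,g,h$, which is precisely the multiplicatively written $2$-cocycle equation. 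I would then observe that setting $f=0$ and $h=0$ forces $c_{0,g}=c_{g,0}=c_{0,0}$ for all $g$, so that $c_{0,0}^{-1}x_0$ is a two-sided unit, and conversely any cocycle produces this unit; this also shows the normalization clause $\varphi(0,g)=\varphi(g,0)$ in the Remark is subsumed by the cocycle equation.

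For part (2), I would use that a $G$-graded isomorphism $\psi$ between the ring structures $F_c$ and $F_{c'}$ carries the one-dimensional space $kx_g$ into $kx_g$, so $\psi(x_g)=\beta(g)x_g$ for a unique $\beta(g)\in k^*$; I take $\psi$ to fix $F_0=k$ pointwise, i.e.\ to be a graded $k$-algebra isomorphism, which is the correct equivalence for a bijection with $H^2(G,k^*)$ under the trivial action. Imposing $\psi(x_fx_g)=\psi(x_f)\psi(x_g)$ and cancelling $x_{f+g}$ yields $c_{f,g}/c'_{f,g}=\beta(f)\beta(g)\beta(f+g)^{-1}$, which is exactly the statement that $c-c'$ is the coboundary of $\beta$. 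Hence such an isomorphism exists if and only if $c-c'\in B^2(G,k^*)$.

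Part (3) I would deduce by assembling (1) and (2). By the structure result for graded division rings (each homogeneous component is a one-dimensional $F_0$-vector space), any $F$ as in the Settings with $F_0=k$ admits a homogeneous $k$-basis $\{x_g\}$, and part (1) attaches a cocycle $c\in Z^2(G,k^*)$ to it; I must check the construction returns a division ring rather than merely a ring, which follows because $x_g\cdot(c_{g,-g}c_{0,0})^{-1}x_{-g}=1$ exhibits every nonzero homogeneous element as a unit. A different choice of basis alters $c$ by a coboundary, and isomorphic $F$'s give cohomologous cocycles, both by part (2), so $F\mapsto[c]$ descends to a well-defined map on isomorphism classes; surjectivity is the construction in part (1) and injectivity is the converse in part (2).

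The computations are routine; the points requiring care, which I expect to be the main obstacles, are bookkeeping rather than conceptual. First, because the bar resolution is unnormalized, the unit is $c_{0,0}^{-1}x_0$ rather than $x_0$, and this constant must be tracked consistently through the inverse formula in part (3) and the unit axiom in part (1). Second, one must fix the precise notion of morphism: the isomorphisms must be graded and $k$-linear, since allowing automorphisms of $k$ would collapse the correspondence to a quotient of $H^2(G,k^*)$. Finally, I would emphasize that commutativity of $F$ is never used in (1)--(3); only the commutativity and centrality of $k^*$ matter, so the result holds verbatim for graded division rings, the commutative \emph{graded field} case being the specialization in which the cocycle is additionally symmetric.
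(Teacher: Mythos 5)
The paper does not prove this statement at all: it is quoted verbatim from N\u{a}st\u{a}sescu--Van Oystaeyen (\emph{Graded Ring Theory}, I.4.7), so there is no internal proof to compare against. Your argument is correct and is, in essence, the standard proof of this classical correspondence: associativity of the $k$-bilinear product is exactly the (multiplicative) $2$-cocycle identity, a degree-preserving $k$-algebra isomorphism rescales the homogeneous basis by a function $\beta\colon G\to k^*$ whose coboundary measures $c/c'$, and part (3) follows by combining these with the fact that each $F_g$ is one-dimensional over $F_0$ and that a cocycle always yields a graded \emph{division} ring (every nonzero homogeneous element is invertible via your explicit inverse formula). Your two points of care are exactly the right ones: since the bar resolution is unnormalized, the identity is $c_{0,0}^{-1}x_0$ rather than $x_0$ (and your observation that the cocycle equation already forces $c_{0,g}=c_{g,0}=c_{0,0}$, making the paper's separate normalization clause redundant, is correct); and the isomorphisms in (2) must indeed be taken to fix $F_0=k$ pointwise --- with arbitrary graded ring isomorphisms the ``only if'' direction of (2) can fail, since a semilinear map can identify $k[G,c]$ with $k[G,\sigma(c)]$ for $\sigma\in\operatorname{Aut}(k)$ even when $c-\sigma(c)\notin B^2(G,k^*)$. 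One trivial remark: in (3), for noncommutative $F$ you should note that the right inverse $(c_{g,-g}c_{0,0})^{-1}x_{-g}$ and the left inverse $(c_{-g,g}c_{0,0})^{-1}x_{-g}$ of $x_g$ coincide by the usual two-sided-inverse argument, but this is bookkeeping, not a gap.
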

\begin{definition}
Let $G$ be a group, $k$ be a field, $\tilde{c} \in Z^2(G,k^*)$ which maps to $c \in H^2(G,k^*)$. We denote by $k[G,\tilde{c}]$ or $k[G,c]$ the graded field $F$ which satisfies $F=\oplus_{g \in G} kx_g$ and $x_gx_h=\tilde{c}_{g,h}x_{g+h}$. Under this notation, the group algebra is $k[G]=k[G,0]$, which is the $k$-vector space $k[G]=\oplus_g kx_g$ endowed with the $k$-linear multiplication $x_gx_h=x_{g+h}$. 
\end{definition}
The Noetherian property of a group algebra only depends on the finite generation property of $G$, and is independent of $k$:
\begin{proposition}[\cite{MR4068915Ggradedirreducible}, Theorem 3.7]\label{5.8}
Let $G$ be a group, $k$ be a field, then $k[G]$ is Noetherian if and only if $G$ is finitely generated.    
\end{proposition}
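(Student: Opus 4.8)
The plan is to prove the two implications separately, with essentially all the content living in the forward direction. Throughout I use that $G$ is abelian (as in \textbf{Settings}~5.2) and I record the standard fact that for an abelian group the conditions ``$G$ is finitely generated'', ``$G$ is Noetherian as a $\Z$-module'', and ``$G$ satisfies the ascending chain condition on subgroups'' are all equivalent: subgroups of $G$ are exactly $\Z$-submodules, a module is Noetherian iff it has ACC on submodules, and finitely generated modules over the Noetherian ring $\Z$ are Noetherian. I write $x_g$ for the canonical basis element of $k[G]$ in degree $g$, so that $x_gx_h=x_{g+h}$ and each $x_g$ is a unit with inverse $x_{-g}$.

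For the direction ``$G$ finitely generated $\Rightarrow k[G]$ Noetherian'', I would argue as follows. If $g_1,\ldots,g_n$ generate $G$, then every $x_g$ is a product $\prod_i x_{g_i}^{a_i}$ with $a_i\in\Z$, so $k[G]$ is generated as a commutative $k$-algebra by the finitely many units $x_{g_1},\ldots,x_{g_n}$ and their inverses. Hence $k[G]$ is a finitely generated commutative $k$-algebra and is Noetherian by the Hilbert basis theorem (equivalently, $k[G]$ is a localization of a polynomial ring).

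For the converse I argue contrapositively: assuming $G$ is not finitely generated, I construct an infinite strictly ascending chain of ideals of $k[G]$. Since $G$ is not finitely generated it fails ACC on subgroups, so there is a strictly increasing chain $H_1\subsetneq H_2\subsetneq\cdots$ of subgroups. To each subgroup $H\le G$ I attach its relative augmentation ideal $\mathfrak{a}_H=(x_h-1:h\in H)\,k[G]$, which is precisely the kernel of the canonical surjection $\pi_H\colon k[G]\to k[G/H]$ sending $x_g\mapsto x_{\bar g}$. Plainly $H\subseteq H'$ gives $\mathfrak{a}_H\subseteq\mathfrak{a}_{H'}$. The decisive point is strictness: for $h'\in H'\setminus H$ we have $\pi_H(x_{h'}-1)=x_{\overline{h'}}-1\neq 0$ in $k[G/H]$ because $\overline{h'}\neq 0$ in $G/H$, so $x_{h'}-1\in\mathfrak{a}_{H'}\setminus\mathfrak{a}_H$. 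Therefore $\mathfrak{a}_{H_1}\subsetneq\mathfrak{a}_{H_2}\subsetneq\cdots$ is a strictly ascending chain of ideals, and $k[G]$ is not Noetherian.

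The main obstacle, and the only nonformal input, is the strictness of the chain of augmentation ideals; the clean device is to realize $\mathfrak{a}_H$ as $\ker\pi_H$ and to test the generator $x_{h'}-1$ in the quotient $k[G/H]$, while the inclusion $\mathfrak{a}_H\subseteq\mathfrak{a}_{H'}$ and the easy direction are purely formal. I would also remark that \Cref{2.7} does not by itself close the forward direction: it only yields $\rank_\Z G<\infty$, which is strictly weaker than finite generation (for example $\Q$ has rank $1$ yet is not finitely generated, and indeed $k[\Q]$ is not Noetherian), so the subgroup-chain argument is genuinely needed rather than the rank bound alone.
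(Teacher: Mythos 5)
Your proof is correct, but there is nothing in the paper to compare it against: the paper gives no argument for \Cref{5.8} at all, simply citing Theorem 3.7 of \cite{MR4068915Ggradedirreducible}, so your write-up supplies content the paper outsources. On its merits: the forward direction is the standard Hilbert basis argument and is fine; the only blemish is the parenthetical ``equivalently, $k[G]$ is a localization of a polynomial ring,'' which is literally true only when $G$ is free abelian --- for $G$ with torsion, $k[G]$ is a proper quotient of a Laurent polynomial ring --- but this is harmless, since what you actually use is that a finitely generated commutative $k$-algebra is Noetherian. The converse via the relative augmentation ideals $\mathfrak{a}_H=(x_h-1:h\in H)$ is the expected device, and your treatment of the one nontrivial point (strictness of the chain) is clean and even slightly more economical than you advertise: you never need the full equality $\mathfrak{a}_H=\ker\pi_H$, only the trivial inclusion $\mathfrak{a}_H\subseteq\ker\pi_H$ together with the observation that $\pi_H(x_{h'}-1)=x_{\overline{h'}}-1\neq 0$ because $x_{\overline{h'}}$ and $1$ are distinct elements of the $k$-basis $\{x_{\bar g}\}$ of $k[G/H]$. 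Two of your side remarks also deserve endorsement. First, your explicit restriction to abelian $G$ (the standing assumption of Section 5, stated in the Settings there) is a necessity, not a convenience: for nonabelian groups the ``if'' direction fails --- the free group on two generators is finitely generated, yet its group algebra is not Noetherian, since that group violates the maximal condition on subgroups. Second, your closing observation that \Cref{2.7} cannot substitute for the subgroup-chain argument is exactly right: rank finiteness is strictly weaker than finite generation, and $k[\mathbb{Q}]$ (non-Noetherian, consistent with \Cref{6.2}) witnesses the gap.
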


In general, even if $G$ is abelian, a class in $Z^2(G,k^*)$ may correspond to a noncommutative ring. For example, the ring $k<x,y,x^{-1},y^{-1}>/(xy-qyx),q \neq 0,1$ is a $\Z^2$-graded division ring which is noncommutative. This ring is a ``quantum version" of the Laurent polynomial ring. It is not isomorphic to the group algebra as the group algebra is commutative. This implies $H^2(\Z^2,k^*)\neq 0$ for any $k\neq \mathbb{F}_2$.

For a $G$-graded division ring $F$ defined by the class of a function $c \in Z^2(G,k^*)$ where $G$ is abelian, the commutation of $x_g,x_h$ leads to the equality $c_{g,h}=c_{h,g}$. Thus we may check the $S_2$-action on all functions $G\times G \to k^*$ which permutes the two lower indices.
\begin{lemma}
Let $G$ be an abelian group, $A$ be a trivial $G$-module. Then there is an $S_2$-action on functions $G\times G \to A$: if $S_2=\{e,\sigma\}$, then $\sigma \varphi(g,h)=\varphi(h,g)$, and $Z^2(G,A)$ and $B^2(G,A)$ are all $S_2$-invariant.
\end{lemma}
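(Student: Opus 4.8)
The plan is to verify the three assertions in turn, relying throughout on the explicit cocycle and coboundary conditions recorded in the preceding Remark, specialized to the case of a \emph{trivial} $G$-module $A$ (so that every occurrence of $f\cdot\varphi(g,h)$ or $f\beta(g)$ collapses to $\varphi(g,h)$ or $\beta(g)$ respectively).

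First I would check that $\sigma$ genuinely defines an $S_2$-action by automorphisms of the abelian group $\Hom_{set}(G\times G,A)$. Additivity is immediate since $\sigma(\varphi+\psi)(g,h)=(\varphi+\psi)(h,g)=(\sigma\varphi+\sigma\psi)(g,h)$, and $\sigma$ is an involution because $\sigma(\sigma\varphi)(g,h)=(\sigma\varphi)(h,g)=\varphi(g,h)$; letting $e$ act as the identity then produces a genuine action of $S_2=\{e,\sigma\}$ by group homomorphisms. This is the routine part.

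Next I would prove $Z^2(G,A)$ is $S_2$-invariant. The normalization $\varphi(1,g)=\varphi(g,1)$ is clearly preserved under swapping the two arguments. For the cocycle identity, given $\varphi\in Z^2(G,A)$ I want to show that $\sigma\varphi$ satisfies $\varphi(h,g)-\varphi(h,f+g)+\varphi(g+h,f)-\varphi(g,f)=0$ for all $f,g,h$. The key move is to apply the cocycle identity for $\varphi$ after the relabeling $(f,g,h)\mapsto(h,g,f)$; using commutativity of $G$ to rewrite $h+g=g+h$ and $g+f=f+g$, and then negating the resulting identity, yields exactly the desired equation. I expect this bookkeeping — pinning down the correct permutation of arguments — to be the only place requiring care, and I would emphasize that both the triviality of the action and the commutativity of $G$ are used here: without commutativity the swapped arguments would not reassemble into a cocycle relation.

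Finally, for $B^2(G,A)$ I would observe something stronger than invariance. With trivial action a coboundary has the form $\varphi(f,g)=\beta(g)-\beta(f+g)+\beta(f)$, and since $G$ is abelian this expression is symmetric in $f$ and $g$, so $(\sigma\varphi)(f,g)=\varphi(g,f)=\beta(f)-\beta(f+g)+\beta(g)=\varphi(f,g)$. Thus $\sigma$ fixes $B^2(G,A)$ pointwise, which in particular gives the claimed $S_2$-invariance and completes the argument.
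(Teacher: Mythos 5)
Your proposal is correct and follows essentially the same route as the paper: verify that $\sigma$ is an additive involution, transfer the cocycle identity to $\sigma\varphi$ by permuting the arguments $(f,g,h)\mapsto(h,g,f)$ and using commutativity of $G$ together with triviality of the action, and check coboundaries directly. The only (harmless) difference is that you make explicit the stronger observation that $\sigma$ fixes $B^2(G,A)$ pointwise, which the paper's computation also establishes but does not emphasize.
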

\begin{proof}
This is an $S_2$-action since $\sigma$ commutes with addition and subtraction of functions, and $\sigma^2\varphi(g,h)=\sigma\varphi(h,g)=\varphi(g,h)$, so $\sigma^2$ acts via identity. Now it suffices to prove that the identities defining $Z^2(G,A)$ and $B^2(G,A)$ are preserved under $\sigma$. If for all $g$, $\varphi(1,g)=\varphi(g,1)$, then $\sigma\varphi(g,1)=\sigma\varphi(1,g)$, which means $\sigma\varphi(1,g)=\sigma\varphi(g,1)$.Suppose for all 
$f,g,h$, $f\cdot\varphi(g,h)-\varphi(f+g,h)+\varphi(f,g+h)-\varphi(f,g)=0$, then $\varphi(g,h)-\varphi(f+g,h)+\varphi(f,g+h)-\varphi(f,g)=0$ because the action is trivial. Then $\sigma\varphi(h,g)-\sigma\varphi(h,f+g)+\sigma\varphi(g+h,f)-\sigma\varphi(g,f)=0$. Since $f,g,h$ are arbitrary, we can switch $f,h$ to get $\sigma\varphi(f,g)-\sigma\varphi(f,h+g)+\sigma\varphi(g+f,h)-\sigma\varphi(g,h)=0$. This implies $\sigma\varphi(f,g)-\sigma\varphi(f,g+h)+\sigma\varphi(f+g,h)-f\cdot\sigma\varphi(g,h)=0$ because $G$ is abelian and the action is trivial. If for some $\beta:G \to A$, $\varphi(f,g)=f\beta(g)-\beta(f+g)+\beta(f)$, then $\sigma\varphi(f,g)=\varphi(g,f)=g\beta(f)-\beta(g+f)+\beta(g)=\beta(f)-\beta(f+g)+f\beta(g)$ since $G$ is abelian and the action is trivial. So all the conditions defining $Z^2(G,A)$ and $B^2(G,A)$ are preserved under the $S_2$-action, so they are $S_2$-invariant.
\end{proof}
\begin{definition}
We define $Z^2_S(G,A)=Z^2(G,A)^{S_2}$ which is the $S_2$-invariant subgroup of $Z^2(G,A)$, $B^2_S(G,A)=B^2(G,A)^{S_2}$ which is the $S_2$-invariant subgroup of $B^2(G,A)$, $H^2_S(G,A)=Z^2_S(G,A)/B^2_S(G,A)$. We call $H^2_S(G,A)$ the $S_2$-invariant second group cohomology. There is an induced $S_2$-action on $H^2(G,A)$, and $H^2_S(G,A)$ is the $S_2$-invariant subgroup of $H^2(G,A)$.   
\end{definition}
\begin{theorem}\label{5.11}
Let $G$ be an abelian group, $A_i,i \in \Lambda$ be abelian groups with a trivial $G$-action. Then $H^2_S(G,\prod_{i \in \Lambda} A_i)=\prod_{i \in \Lambda} H^2_S(G,A_i)$.    
\end{theorem}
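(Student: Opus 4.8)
The plan is to reduce the statement to the already-established fact that the ordinary functors $Z^2$, $B^2$, and $H^2$ commute with arbitrary products, and then to check that passing to $S_2$-invariants and then to the quotient $Z^2_S/B^2_S$ is compatible with products. First I would recall the identification
$$\Hom_{\Z G}\Big(B_2,\prod_{i \in \Lambda} A_i\Big)=\prod_{i \in \Lambda}\Hom_{\Z G}(B_2,A_i),$$
established in the earlier remark, under which the defining equations of $Z^2$ and $B^2$ hold if and only if they hold in each component. This gives $Z^2(G,\prod_i A_i)=\prod_i Z^2(G,A_i)$ and $B^2(G,\prod_i A_i)=\prod_i B^2(G,A_i)$.

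The key observation is that the $S_2$-action is componentwise with respect to this product decomposition. Indeed, $\sigma$ acts on a function $G \times G \to \prod_i A_i$ only by swapping its two arguments, leaving the target untouched, so under the identification above the action of $\sigma$ on the left corresponds exactly to the product of the actions of $\sigma$ on each $\Hom_{\Z G}(B_2,A_i)$. Since taking $S_2$-fixed points of a product equipped with a componentwise action is computed componentwise — an element of a product is fixed if and only if each of its components is fixed — I obtain
$$Z^2_S\Big(G,\prod_{i} A_i\Big)=\prod_{i} Z^2_S(G,A_i), \qquad B^2_S\Big(G,\prod_{i} A_i\Big)=\prod_{i} B^2_S(G,A_i).$$

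Finally I would combine these two identities with the definition $H^2_S=Z^2_S/B^2_S$. For each $i$ there is a short exact sequence $0 \to B^2_S(G,A_i) \to Z^2_S(G,A_i) \to H^2_S(G,A_i) \to 0$; because products are exact in the category of abelian groups, taking the product over $i \in \Lambda$ yields a short exact sequence whose left and middle terms are, by the previous step, exactly $B^2_S(G,\prod_i A_i)$ and $Z^2_S(G,\prod_i A_i)$. Hence its cokernel is on one hand $H^2_S(G,\prod_i A_i)$ and on the other $\prod_i H^2_S(G,A_i)$, which gives the desired equality.

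I expect the only point requiring genuine care to be this last step: one should use that $B^2_S(G,A_i) \subset Z^2_S(G,A_i)$ (clear, since $B^2 \subset Z^2$ and both are $S_2$-stable) together with exactness of the product functor on the category of abelian groups, rather than trying to commute $S_2$-invariants past the quotient $Z^2/B^2$ directly — the latter would require controlling an $H^1$-type obstruction, which the $Z^2_S/B^2_S$ formulation neatly sidesteps.
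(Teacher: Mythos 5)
Your proof is correct. It shares the paper's key observation---under the identification $\Hom_{\Z G}(B_2,\prod_i A_i)=\prod_i \Hom_{\Z G}(B_2,A_i)$ the $S_2$-action is componentwise, so invariance is checked component by component---but the two arguments diverge at the final step. The paper's proof (two lines) embeds both sides into $H^2(G,\prod_i A_i)=\prod_i H^2(G,A_i)$ and takes $S_2$-invariants there; it therefore leans on the assertion, made in the definition of $H^2_S$, that $Z^2_S/B^2_S$ coincides with the $S_2$-invariant subgroup of $H^2$---an identification the paper states but never proves, and which in general is exactly the point where one must control the kind of obstruction you mention (an invariant class need not have an invariant cocycle representative a priori). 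Your proof instead stays at the cocycle level: you show $Z^2_S$ and $B^2_S$ commute with products, since fixed points of a componentwise action on a product are the product of the fixed points, and then pass to quotients using exactness of arbitrary products of abelian groups, so that $\prod_i Z^2_S(G,A_i)/\prod_i B^2_S(G,A_i)=\prod_i\bigl(Z^2_S(G,A_i)/B^2_S(G,A_i)\bigr)$. What this buys is self-containedness: you only ever invoke the definition $H^2_S=Z^2_S/B^2_S$, so your argument is valid independently of whether $H^2_S$ agrees with $(H^2)^{S_2}$, whereas the paper's proof is complete only modulo that unproved identification. The cost is a modestly longer argument.
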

\begin{proof}
Both sides of the equation sit in $H^2(G,\prod_{i \in \Lambda} A_i)=\prod_{i \in \Lambda} H^2(G,A_i)$. We see a function $G\times G \to \prod_{i \in \Lambda}A_i$ is $S_2$-invariant if and only if every component is invariant, so the equality holds.   
\end{proof}
\begin{theorem}
For $c \in H^2(G,k^*)$, $k[G,c]$ is commutative if and only if $c \in H^2_S(G,k^*)$.    
\end{theorem}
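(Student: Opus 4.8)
The plan is to reduce both sides of the equivalence to a single condition on the defining cocycle — symmetry — and then transfer carefully between the cocycle level and the cohomology-class level. First I would fix a representative $\tilde c \in Z^2(G,k^*)$ of $c$, so that by definition $F := k[G,c] = \oplus_{g \in G} kx_g$ with $x_gx_h = \tilde c_{g,h}x_{g+h}$. Since $G$ is abelian, for any $g,h$ the two products $x_gx_h = \tilde c_{g,h}x_{g+h}$ and $x_hx_g = \tilde c_{h,g}x_{h+g} = \tilde c_{h,g}x_{g+h}$ land in the same homogeneous component $F_{g+h} = kx_{g+h}$. As $\{x_g\}$ is a $k$-basis and $x_{g+h}\neq 0$, this gives $x_gx_h = x_hx_g$ if and only if $\tilde c_{g,h} = \tilde c_{h,g}$. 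Because $k$ is central and the multiplication is $k$-bilinear, commutativity of $F$ is equivalent to commutativity on the homogeneous basis $\{x_g\}$; hence $F$ is commutative if and only if $\tilde c_{g,h} = \tilde c_{h,g}$ for all $g,h$, i.e. $\sigma\tilde c = \tilde c$, i.e. $\tilde c \in Z^2_S(G,k^*)$.

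With this equivalence in hand, both directions are short. For the backward direction, if $c \in H^2_S(G,k^*)$ I would choose a symmetric representative $\tilde c \in Z^2_S(G,k^*)$; then $k[G,\tilde c]$ is commutative by the displayed equivalence, and by part (2) of \Cref{5.6} every cocycle representing $c$ gives a ring $G$-graded-isomorphic to this one, so $k[G,c]$ is commutative since commutativity is preserved under isomorphism. For the forward direction, if $k[G,c]$ is commutative then the representative $\tilde c$ used to present it satisfies $\tilde c_{g,h} = \tilde c_{h,g}$ by the equivalence, so $\tilde c \in Z^2_S(G,k^*)$ and therefore $c$ lies in the image of $Z^2_S(G,k^*)$ in $H^2(G,k^*)$, i.e. $c \in H^2_S(G,k^*)$.

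The hard part will not be any computation but the bookkeeping in passing between the cocycle $\tilde c$ and its class $c$: commutativity is literally a property of a chosen cocycle, whereas ``$c \in H^2_S$'' is a property of the class. To make this airtight I would first record that every coboundary is symmetric when $G$ is abelian with trivial action — indeed $\varphi(f,g) = \beta(f) + \beta(g) - \beta(f+g)$ is symmetric in $f,g$ — so that $B^2(G,k^*) = B^2_S(G,k^*)$ and the natural map $H^2_S = Z^2_S/B^2_S \to H^2$ is injective with image exactly the classes admitting a symmetric representative. This is precisely what is needed to read ``$c \in H^2_S$'' as ``$c$ has a symmetric cocycle representative,'' the form used in both directions above; combined with part (2) of \Cref{5.6}, which guarantees commutativity is independent of the chosen representative, this closes the argument.
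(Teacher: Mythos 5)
Your proof is correct and takes essentially the same route as the paper: lift $c$ to a cocycle $\tilde c$ and observe that, since $x_gx_h$ and $x_hx_g$ both lie in the one-dimensional component $kx_{g+h}$, commutativity of $k[G,c]$ is equivalent to the symmetry $\tilde c_{g,h}=\tilde c_{h,g}$, i.e.\ to $\tilde c\in Z^2_S(G,k^*)$. The only difference is that you spell out the bookkeeping the paper leaves implicit --- coboundaries are automatically symmetric when $G$ is abelian and the action trivial, so $B^2_S=B^2$, the map $H^2_S(G,k^*)\to H^2(G,k^*)$ is injective with image exactly the classes admitting a symmetric representative, and commutativity is independent of the chosen representative by \Cref{5.6}(2) --- which is a sharpening of the same argument rather than a different one.
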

\begin{proof}
Let $\tilde{c}$ be a lifting of $c$ to $Z^2(G,A)$. Then $F=k[G,c]$ is commutative if and only if $x_gx_h=x_hx_g$ for all $g,h \in G$. Since $x_{gh}=x_{hg}$, this means $\tilde{c}_{g,h}=\tilde{c}_{h,g}$, that is, $\tilde{c} \in Z^2_S(G,k^*)$, so $c \in H^2_S(G,k^*)$.    
\end{proof}
\begin{theorem}\label{5.13}
Let $k$ be a field, $G=\mathbb{Z}^n$, then $H^2_S(G,k^*)=0$.    
\end{theorem}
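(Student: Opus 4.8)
The plan is to leverage the structure theorem for $G$-graded fields over a finite free group (\Cref{2.4}) together with the cohomological dictionary of \Cref{5.6}. The key observation is that $H^2_S(G,k^*)$ is not merely an abstract group: by \Cref{5.6}(3) combined with the preceding commutativity criterion, its elements are in bijection with the isomorphism classes of \emph{commutative} $G$-graded division rings $F$ supported on all of $G$ with $F_0=k$, that is, with $G$-graded fields over $k$ supported on $G$. So to show the group vanishes it suffices to exhibit exactly one such isomorphism class, which must then be the one attached to the identity element $0$.

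First I would take an arbitrary $\mathbb{Z}^n$-graded field $F$ over $k$ with $\Supp(F)=\mathbb{Z}^n$, fix the standard basis $e_1,\ldots,e_n$, and choose $0\neq x_i\in F_{e_i}$. Since $\mathbb{Z}^n$ is finite free, \Cref{2.4}(2) applies verbatim and yields an isomorphism of $G$-graded rings $F\cong k[x_1,\ldots,x_n,x_1^{-1},\ldots,x_n^{-1}]$. Next I would observe that this Laurent polynomial ring is precisely the group algebra $k[\mathbb{Z}^n]=k[G,0]$, i.e.\ the graded field attached to the trivial class $0\in H^2_S(G,k^*)$: under the identification $x_i=x_{e_i}$ the multiplication reads $x_gx_h=x_{g+h}$, which is the cocycle $\tilde{c}_{g,h}=1$.

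Combining the two steps, every $G$-graded field over $k$ supported on $G$ is isomorphic to $k[G,0]$, so there is a single isomorphism class; under the bijection with $H^2_S(G,k^*)$ this forces $H^2_S(\mathbb{Z}^n,k^*)=0$.

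I do not anticipate a serious obstacle, since the substantive content is already packaged in \Cref{2.4}(2); the only points requiring care are bookkeeping: confirming that the correspondence of \Cref{5.6}(3) restricts to a bijection between commutative isomorphism classes and $H^2_S$ (this is exactly the commutativity criterion proved just before the statement), and confirming that the trivial class $0$ is the one realized by the group algebra. An alternative, more computational route would bypass the structure theorem and compute $H^2(\mathbb{Z}^n,k^*)$ directly via the bar resolution or a K\"unneth argument and then extract the $S_2$-invariants; this works but is longer, and the antisymmetry that forces the symmetric part to vanish is precisely what the Laurent-polynomial argument encodes, so I would favor the structural proof.
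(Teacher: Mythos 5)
Your proof is correct and takes essentially the same route as the paper: the paper's entire proof is ``By \Cref{2.4}'', i.e.\ precisely the structural argument that any $\mathbb{Z}^n$-graded field supported on $\mathbb{Z}^n$ is a Laurent polynomial ring, hence isomorphic to the group algebra $k[G,0]$, so that under the correspondence of \Cref{5.6} there is only the trivial class in $H^2_S(G,k^*)$. You have merely spelled out the bookkeeping (the restriction of the bijection to commutative classes and the identification of the Laurent ring with the trivial cocycle) that the paper leaves implicit.
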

\begin{proof}
By \Cref{2.4}. 
\end{proof}
\begin{remark}
In general, we only have that $H^2_S(G,k^*)$ is a subgroup of $H^2(G,k^*)$. If $k^*$ is $2$-divisible, then it is a direct summand. They are not always the same; we see $H^2(\Z^2,k^*)\neq 0$ for $k \neq \mathbb{F}_2$ by the existence of a quantum Laurent polynomial ring, but $H^2_S(\Z^2,k^*)=0$ for any $k$ by \Cref{5.13}. 
\end{remark}
The phenomenon in the above remark only appears in rank at least $2$, as shown below:
\begin{proposition}\label{5.15}
Let $G$ be an abelian group of rank $1$. Then every $G$-graded division ring $F$ is commutative. Equivalently, $H^2(G,k^*)=H^2_S(G,k^*)$ for any field $k$.    
\end{proposition}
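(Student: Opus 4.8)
The plan is to establish the geometric statement — that every $G$-graded division ring over $k$ is commutative — from which the cohomological identity follows formally. Write $F=\oplus_{g\in G}kx_g$ with $x_gx_h=c_{g,h}x_{g+h}$ for some $c\in Z^2(G,k^*)$. Since $k=F_0$ is central in $F$, every element of $F$ is a $k$-linear combination of the $x_g$, so it suffices to prove that any two homogeneous elements commute, i.e.\ that $c_{g,h}=c_{h,g}$ for all $g,h\in G$.

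The key reduction uses the rank hypothesis. Because $G$ is torsion-free of rank $1$, the subgroup $\langle g,h\rangle$ generated by any two elements is a finitely generated torsion-free abelian group of rank at most $1$, hence infinite cyclic (or trivial). Choose a generator $e$ with $g=ae$ and $h=be$ for integers $a,b$, and pass to the graded subring $F_{\langle e\rangle}=\oplus_{n\in\Z}F_{ne}$, which is itself a $\langle e\rangle\cong\Z$-graded division ring.

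I would then show $F_{\langle e\rangle}$ is commutative directly. Put $y=x_e$; since $F$ is a graded division ring, $y$ is invertible, so $y^n$ is a nonzero homogeneous element of degree $ne$. As each component $F_{ne}$ is one-dimensional over the central field $k$, the element $y^n$ spans it, and therefore $F_{\langle e\rangle}=\oplus_{n}ky^n=k[y,y^{-1}]$. Because $k$ is central and $y$ commutes with itself, $k[y,y^{-1}]$ is commutative; in particular $x_g$ and $x_h$, being scalar multiples of $y^a$ and $y^b$, commute, so $c_{g,h}=c_{h,g}$. As $g,h$ were arbitrary, $F$ is commutative. Finally, the reformulation is immediate: by \Cref{5.6} every class of $H^2(G,k^*)$ is realized by some $G$-graded division ring over $k$, and by the preceding theorem such a ring is commutative exactly when its class lies in $H^2_S(G,k^*)$; since all of them are commutative, $H^2(G,k^*)=H^2_S(G,k^*)$.

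The one point requiring care — and the main obstacle — is that \Cref{2.4} cannot be cited to identify $F_{\langle e\rangle}$ with a Laurent polynomial ring, since that statement presupposes commutativity, which is precisely what is to be proved. The commutativity of the rank-$1$ restriction must instead be derived from first principles, using only the invertibility of $y$ in the graded division ring, the one-dimensionality of each homogeneous component over $k$, and the centrality of $k$.
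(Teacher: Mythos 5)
Your proof is correct and is essentially the paper's own argument: the paper writes $G$ as an increasing union of cyclic subgroups $\Z e_i$ and observes that each $F_{\Z e_i}$ must equal the commutative Laurent ring $k[x_{e_i},x_{e_i}^{-1}]$, which is exactly the first-principles step (invertibility of $y$, one-dimensionality of each component over the central field $k$) that you carry out for the cyclic subgroup $\langle g,h\rangle$. Your pairwise reduction versus the paper's exhausting union is only a cosmetic difference, and your caution about not citing \Cref{2.4} (which presupposes commutativity) is consistent with the paper, which likewise derives the Laurent-ring description directly rather than citing that proposition.
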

\begin{proof}
We can write $G$ as a union of copies of $\Z$ containing one another: $G=\cup_i \Z e_i$, $e_i \in \Z e_{i+1}$. We consider $F_{\Z e_i}$ which is a $\Z$-graded division ring over $k$. We take any $x_{e_i}\in F_{e_i}$, then we must have $F_{\Z e_i}=k[x_{e_i},x^{-1}_{e_i}]$ which is a commutative Laurent polynomial ring. So $F=F_G=\cup_i F_{e_i}$ is commutative.    
\end{proof}
\Cref{5.6} allows us to use knowledge on $G$-graded field to derive information on $S_2$-invariant group cohomologies, especially about vanishing and non-vanishing properties.
\begin{definition}
We say a field $k$ is root-closed if for any $n \in \mathbb{N}$, any element in $k$ has an $n$-th root.
\end{definition}
We see that algebraically closed fields are root-closed. On the other hand, root-closed fields are not necessarily algebraically closed; this is the famous Abel-Ruffini theorem.
\begin{lemma}\label{5.17}
Let $G$ be an abelian group of rank $d$. Then there is a filtration of $G$ by finite free abelian groups of rank $d$, $G_1 \subset G_2 \subset G_3 \subset \ldots$ satisfying the following property:
\begin{enumerate}
\item $G=\cup_i G_i=\underset{i}{\varinjlim}G_i$.
\item For any $i$, there are $d+1$-elements $e_1,\ldots,e_n,f$ such that:
\begin{enumerate}
\item $G_i=\oplus_{1 \leq j \leq d}\Z e_j$.
\item $G_{i+1}=\oplus_{1 \leq j \leq d-1}\Z e_j \oplus \Z f$.
\item $e_d=nf$ for some $n$ depending on $i$.
\end{enumerate}
\end{enumerate}
\end{lemma}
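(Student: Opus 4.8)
The plan is to build the filtration explicitly inside $\mathbb{Q}^d$. Since $G$ is torsion-free of rank $d$ (by the standing assumption on $G$), the natural map $G \to \mathbb{Q}\otimes_{\mathbb{Z}} G \cong \mathbb{Q}^d$ is injective, so I may regard $G$ as a subgroup of $\mathbb{Q}^d$; in particular $G$ is countable, say $G=\{g_1,g_2,\ldots\}$. First I would fix $d$ elements of $G$ that are $\mathbb{Q}$-linearly independent and let $H_0$ be the subgroup they generate, which is free of rank $d$. Then set $H_m=H_0+\mathbb{Z}g_1+\cdots+\mathbb{Z}g_m$. Each $H_m$ is finitely generated and torsion-free, hence free, and since it contains $H_0$ and lies in $\mathbb{Q}^d$ it has rank exactly $d$. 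Moreover $H_m\subseteq H_{m+1}$ and $\bigcup_m H_m=G$, because every $g_i$ eventually lies in some $H_m$. As $H_m$ and $H_{m+1}$ are free of rank $d$ with the same $\mathbb{Q}$-span, the quotient $H_{m+1}/H_m$ is a finitely generated torsion group, hence finite.

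The heart of the argument is to refine each single inclusion $H_m\subseteq H_{m+1}$ into finitely many steps of the prescribed elementary shape. For this I would invoke the simultaneous (stacked) basis theorem for finite free abelian groups: given $A\subseteq B$ free of rank $d$ with $B/A$ finite, there is a basis $e_1,\ldots,e_d$ of $B$ and positive integers $n_1,\ldots,n_d$ such that $n_1e_1,\ldots,n_de_d$ is a basis of $A$. Writing $A=H_m$ and $B=H_{m+1}$, I interpolate one coordinate at a time: set $C_j=\langle e_1,\ldots,e_j, n_{j+1}e_{j+1},\ldots,n_de_d\rangle$ for $0\le j\le d$, so that $C_0=A$ and $C_d=B$. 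Each $C_j$ is free of rank $d$ with the displayed basis, and $C_{j-1}\subseteq C_j$ differs only in its $j$-th generator, where $n_je_j$ is replaced by $e_j$ with $n_je_j=n_j\cdot e_j$. After reordering so that this distinguished generator occupies the last position, the inclusion $C_{j-1}\subseteq C_j$ is exactly of the required form, taking the old $n_je_j$ as the ``$e_d$'' of the statement, $e_j$ as ``$f$'', and $n=n_j$. Discarding the trivial steps where $n_j=1$ (so $C_{j-1}=C_j$) leaves a genuine refinement.

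Finally I would concatenate: splice together the refined chains for $H_0\subseteq H_1$, $H_1\subseteq H_2$, and so on, and reindex the resulting groups as $G_1\subseteq G_2\subseteq\cdots$. By construction every consecutive inclusion is elementary of the required type, each $G_i$ is free of rank $d$, and $\bigcup_i G_i=\bigcup_m H_m=G$, which also yields $G=\varinjlim_i G_i$. The only point requiring care is verifying that each elementary step genuinely matches the format in part (2) of the statement, namely that all basis vectors other than the distinguished one are preserved; this is precisely what the coordinate-by-coordinate interpolation guarantees. I expect this bookkeeping, rather than any deep input, to be the main thing to get right.
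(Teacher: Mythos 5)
Your proof is correct, and while it shares the paper's global skeleton (embed $G$ in $\mathbb{Q}\otimes_{\mathbb{Z}}G\cong\mathbb{Q}^d$, note countability, and exhaust $G$ by adding one generator at a time to a fixed rank-$d$ free subgroup), the key local step is genuinely different. The paper exploits the fact that each inclusion $G_i\subseteq G_i+\mathbb{Z}a$ has \emph{cyclic} quotient: it writes down the single relation $r=r_1g_1+\cdots+r_dg_d+r_{d+1}a$, uses torsion-freeness to get $\gcd(r_1,\ldots,r_{d+1})=1$, and performs an explicit change of basis to show that such an extension is \emph{already} elementary --- exactly one basis vector $g_d$ gets replaced by $\frac{1}{r_{d+1}}g_d$. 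You instead treat each inclusion $H_m\subseteq H_{m+1}$ as an arbitrary finite-index inclusion of rank-$d$ free groups, invoke the stacked (simultaneous) basis theorem, and interpolate one coordinate at a time to factor the inclusion into at most $d$ elementary steps. Your route buys generality and brevity: it never needs the quotient to be cyclic, so it would work for any exhaustion of $G$ by finite-index jumps, and it outsources the linear algebra to a standard theorem; the paper's route buys a self-contained computation and the sharper conclusion that one added generator produces exactly one elementary step. One small bookkeeping caveat in your write-up: discarding all trivial steps ($n_j=1$) is fine when $G$ is infinitely generated, but if $G$ happens to be finitely generated this leaves only a finite chain; since the statement's condition (2) permits $n=1$, you should simply keep the trivial steps (or let the chain be eventually constant) rather than discard them.
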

\begin{proof}
Since $G$ has rank $d$, there are $d$ elements in $G$ that are $\mathbb{Q}$-linearly independent. We denote the abelian group generated by these elements by $G_1$, then $G_1 \cong \mathbb{Z}^d$. We see $G$ embeds into $\mathbb{Q}^d$, so in particular, $G$ is countable. So we can choose countably many elements $a_2,a_3,\ldots$ such that $G$ is generated by $G_1,a_2,a_3,\ldots$. Write $G_i=G+\sum_{2 \leq j \leq i}\Z a_j$, then $G_i \cong \Z^d$, $G_i=G_{i-1}+\Z a_i$, and $G=\cup_i G_i=\underset{i}{\varinjlim}G_i$. Consider the filtration $G_1 \subset G_2 \subset G_3 \subset \ldots$, then it suffices to prove (2) for this filtration. Choose one basis $g_1,\ldots,g_d$ of $G_i$. Then as abelian groups, $G_{i+1}=G_i\oplus \Z a_i/\Z r$ where $r$ is a generator of the relations among $g_1,\ldots,g_d,a$. We write $r=r_1g_1+\ldots+r_dg_d+r_{d+1}a$. Since $G_{i+1}$ is torsion-free, $\gcd(r_1,\ldots,r_{d+1})=1$. Let $M=\gcd(r_1,\ldots,r_d)$, $r=Mg+r_{d+1}a$, $g=1/M(r_1g_1+\ldots,r_dg_d)$. Since $1/M\cdot \gcd(r_1,\ldots,r_d)=1$, we see $g$ is a basis element of $G_i$. Thus after a change of basis we may assume $g=g_d,r=Mg_d+r_{d+1}a$ where $\gcd(M,r_{d+1})=1$. So $G_i=\Z g_d \oplus \oplus_{1 \leq i \leq d-1}\Z g_i$ and $G_{i+1}=G_i+M/r_{d+1}\Z g_1$. But we see $\Z+ M/r_{d+1}\Z=1/r_{d+1}\Z$, so $G_{i+1}=1/r_{d+1}\Z g_d \oplus \oplus_{1 \leq i \leq d-1}\Z g_i$. Choose $e_1=g_1,\ldots,e_d=g_d,f=1/r_{d+1}g_d$, then they satisfy (2), so we are done.
\end{proof}
\begin{theorem}\label{5.18 root-closed vanishing}
Let $G$ be an abelian group of rank $d$ which is not necessarily finitely generated. Let $k$ be a root-closed field. Then any $G$-graded field over $k$ is isomorphic to the group algebra. Equivalently, $H^2_S(G,k^*)=0$. In particular, this is true when $k$ is algebraically closed.
\end{theorem}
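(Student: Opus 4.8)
The plan is to reduce the isomorphism statement to the vanishing $H^2_S(G,k^*)=0$, and then to prove this vanishing by trivializing an arbitrary symmetric cocycle along the filtration supplied by \Cref{5.17}. For the reduction, recall from \Cref{5.6} that isomorphism classes of $G$-graded division rings over $k$ are classified by $H^2(G,k^*)$, and by the commutativity criterion (that $k[G,c]$ is commutative if and only if $c\in H^2_S(G,k^*)$) the commutative ones---i.e.\ the $G$-graded fields over $k$---are exactly those indexed by $H^2_S(G,k^*)$. Since the group algebra $k[G]=k[G,0]$ is the class of $0$, every $G$-graded field over $k$ is isomorphic to $k[G]$ precisely when $H^2_S(G,k^*)=0$. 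So it suffices to show that every class in $H^2_S(G,k^*)$ vanishes.

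Fix a symmetric cocycle $c\in Z^2_S(G,k^*)$ and write $(\partial\beta)(g,h)=\beta(g)\beta(h)\beta(g+h)^{-1}$ for the coboundary of a map $\beta\colon G\to k^*$ (which is automatically symmetric since $G$ is abelian). Using the filtration $G=\varinjlim_i G_i$ of \Cref{5.17}, where each $G_i\cong\mathbb{Z}^d$, I would construct compatible trivializations $\beta_i\colon G_i\to k^*$ with $\partial\beta_i=c|_{G_i}$ and $\beta_{i+1}|_{G_i}=\beta_i$, by induction on $i$. The base case and the solvability of $\partial\beta_i=c|_{G_i}$ at each level are guaranteed by \Cref{5.13}, which gives $H^2_S(G_i,k^*)=0$ for the free group $G_i\cong\mathbb{Z}^d$. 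Gluing the compatible $\beta_i$ then produces $\beta\colon G\to k^*$ with $\partial\beta=c$, since any $g,h$ lie in a common $G_i$; hence $c\in B^2_S(G,k^*)$ and $c=0$ in $H^2_S(G,k^*)$.

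The inductive step is the heart of the argument, and is where root-closedness enters. Given $\beta_i$, choose any trivialization $\beta'$ of $c|_{G_{i+1}}$ (possible since $H^2_S(G_{i+1},k^*)=0$). Then $\chi:=\beta'|_{G_i}\cdot\beta_i^{-1}$ is a homomorphism $G_i\to k^*$, because both $\beta'|_{G_i}$ and $\beta_i$ trivialize $c|_{G_i}$, so $\partial\chi=1$. To correct $\beta'$ back to $\beta_i$ on $G_i$, I must extend $\chi$ to a homomorphism $\tilde\chi\colon G_{i+1}\to k^*$; then $\beta_{i+1}:=\beta'\cdot\tilde\chi^{-1}$ satisfies $\partial\beta_{i+1}=c|_{G_{i+1}}$ and $\beta_{i+1}|_{G_i}=\beta_i$. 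By \Cref{5.17} we have compatible bases $G_i=\bigoplus_{j<d}\mathbb{Z}e_j\oplus\mathbb{Z}e_d$ and $G_{i+1}=\bigoplus_{j<d}\mathbb{Z}e_j\oplus\mathbb{Z}f$ with $e_d=nf$, so extending $\chi$ amounts to choosing $\tilde\chi(f)$ with $\tilde\chi(f)^n=\chi(e_d)$ while keeping $\tilde\chi(e_j)=\chi(e_j)$ for $j<d$. This is exactly an $n$-th root problem in $k^*$, solvable because $k$ is root-closed; this is the only place the hypothesis is used. The main obstacle is precisely this extension: the restriction maps $\Hom(G_{i+1},k^*)\to\Hom(G_i,k^*)$ need not be surjective for a general field, and their failure of surjectivity is an obstruction (a $\varprojlim^1$ of characters) that root-closedness kills, making the system of trivializations compatible. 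Finally, the ``in particular'' clause for algebraically closed $k$ is immediate, since such fields are root-closed.
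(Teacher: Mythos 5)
Your proposal is correct and is essentially the paper's own proof recast in cocycle language: both arguments induct along the filtration of \Cref{5.17}, use the vanishing for $\mathbb{Z}^d$ (\Cref{5.13}) as the base case, and invoke root-closedness exactly once per step to pass from $G_i$ to $G_{i+1}$. The paper phrases the inductive step as extending a multiplicative homogeneous basis (choosing $y\in F_f$ with $y^n=cx_{e_d}$ and rescaling by an $n$-th root of $c$ to get $x_f^n=x_{e_d}$), which is precisely the same $n$-th-root extraction your character-extension step performs after comparing two trivializations.
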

\begin{proof}
If $G$ is finite free, then $G=\mathbb{Z}^d$, so any $G$-graded field is a Laurent polynomial ring and the theorem is true. Now we assume $G$ is infinitely generated and choose a filtration $G_1 \subset G_2 \subset G_3 \ldots$ of finite free abelian groups as in \Cref{5.17}. Let $F$ be a $G$-graded field over $k$. It suffices to find a system of homogeneous $k$-basis elements $0 \neq x_g \in F_g$ such that $x_gx_h=x_{gh}$ for all $g,h \in G$; we say such a homogeneous $k$-basis is good. Note that $F_{G_1}$ is a $G_1$-graded field where $G_1\cong \Z^d$, so $F_{G_1}$ has a good homogeneous $k$-basis. Suppose we have found a good homogeneous $k$-basis for $F_{G_i}$, we claim that they extend to a good homogeneous $k$-basis for $F_{G_{i+1}}$. Actually, we write $G_i=\oplus_{1 \leq i \leq d}\Z e_i$, $G_{i+1}=\oplus_{1 \leq i \leq d-1}\Z e_i \oplus \Z f$, $nf=e_d$. Let $x_{e_d}$ be the degree $e_d$ good basis element, and choose any $y \in F_f$, then $y^n=cx_{e_d}$ for some $c \in k^*$. Since $k$ is root-closed, we can find $c' \in k^*$ with $c=c'^n$. Thus let $x_f=1/c'y$, we see $(x_f)^n=x_{e_d}$. Thus we can extend the good basis from $F_{G_i}$ to $F_{G_{i+1}}$ by adjoining monomials in $x_f$ as basis elements. By induction, a good basis of $F_{G_1}$ extends to a good basis of $F_{G_i}$ for every $i$; taking union over all $i$, we get a good basis of $F$. Since a good basis exists, we see $F$ is isomorphic to the group algebra $k[G]$. Equivalently, $H^2_S(G,k^*)=0$.
\end{proof}
\begin{example}
In \Cref{4.1 main example}, $R=R(k,d)$ is a Noetherian $K$-domain. Let $\bar{K}$ be the algebraic closure of $K$. Then $\bar{K}\otimes_K R$ is a graded field over an algebraically closed field $\bar{K}$, so by \Cref{5.18 root-closed vanishing}, $\bar{K}\otimes_K R \cong \bar{K}[\mathbb{Q}^d]$. Since $\mathbb{Q}^d$ is not finitely generated, $\bar{K}\otimes_K R$ is not Noetherian. This is an example of a Noetherian algebra over a field whose base change is not Noetherian.  
\end{example}

\Cref{5.18 root-closed vanishing} leads to the following vanishing result:
\begin{corollary}
Let $G$ be an abelian group of finite rank.
\begin{enumerate}
\item $H^2_S(G,\mathbb{C}^*)=0$.
\item $H^2_S(G,\mathbb{R}_{>0})=H^2_S(G,\mathbb{R})=H^2_S(G,\mathbb{R}/\mathbb{Z})=0$.
\item $H^2_S(G,\mathbb{Q})=H^2_S(G,\mathbb{Q}/\mathbb{Z})=H^2_S(G,\mathbb{Z}[1/p]/\mathbb{Z})=0$ where $p$ is a prime number.
\item If 
$\rank G=1$, then $H^2(G,\mathbb{Q})=H^2(G,\mathbb{Q}/\mathbb{Z})=H^2(G,\mathbb{Z}[1/p]/\mathbb{Z})=0$ where $p$ is a prime number.
\end{enumerate}
\end{corollary}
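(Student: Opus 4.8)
The plan is to derive every part from the vanishing theorem \Cref{5.18 root-closed vanishing} together with the additivity result \Cref{5.11} and the rank-one coincidence \Cref{5.15}. The key observation for parts (1)--(3) is that each coefficient group listed is either root-closed as a multiplicative group, or a divisible group, or decomposes as a product of such groups, so that $S_2$-invariant second cohomology vanishes. For part (1), $\mathbb{C}^*$ is the multiplicative group of the algebraically closed field $\mathbb{C}$, so $H^2_S(G,\mathbb{C}^*)=0$ is immediate from \Cref{5.18 root-closed vanishing}.

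For part (2), I would first treat $\mathbb{R}_{>0}$: as a multiplicative abelian group it is uniquely divisible (every positive real has a unique positive $n$-th root), hence isomorphic to $\mathbb{R}$ as an abstract group. A divisible torsion-free abelian group is a $\mathbb{Q}$-vector space, and the argument of \Cref{5.18 root-closed vanishing} applies verbatim once we have $n$-th roots available in the coefficient group: the only place root-closedness of $k$ was used was to extract $c=c'^n$ inside $k^*$, and unique divisibility of the coefficient group supplies exactly this. So I would state a clean lemma, or simply remark, that \Cref{5.18 root-closed vanishing}'s proof gives $H^2_S(G,A)=0$ whenever $A$ is a divisible abelian group and $G$ has finite rank. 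Then $H^2_S(G,\mathbb{R})=H^2_S(G,\mathbb{R}_{>0})=0$ follow directly, and $H^2_S(G,\mathbb{R}/\mathbb{Z})=0$ follows because $\mathbb{R}/\mathbb{Z}$ is divisible as a quotient of the divisible group $\mathbb{R}$. Part (3) is identical in spirit: $\mathbb{Q}$, $\mathbb{Q}/\mathbb{Z}$, and $\mathbb{Z}[1/p]/\mathbb{Z}$ are all divisible abelian groups (the last being the $p$-primary part of $\mathbb{Q}/\mathbb{Z}$, i.e.\ the Pr\"ufer group $\mathbb{Z}(p^\infty)$), so the divisibility lemma gives the vanishing.

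For part (4), I would restrict to $\rank G=1$ and invoke \Cref{5.15}, which asserts $H^2(G,A)=H^2_S(G,A)$ for rank-one $G$ and any coefficient field. The subtlety is that \Cref{5.15} is stated for $A=k^*$, so I would either check that its proof only uses the rank-one structure of $G$ and the commutativity forced on $\mathbb{Z}$-graded division rings and hence extends to an arbitrary divisible coefficient group, or reprove the coincidence $H^2=H^2_S$ directly at the level of cocycles for rank-one $G$. Granting that coincidence, the divisible coefficient groups $\mathbb{Q}$, $\mathbb{Q}/\mathbb{Z}$, $\mathbb{Z}[1/p]/\mathbb{Z}$ then satisfy $H^2(G,A)=H^2_S(G,A)=0$ by part (3).

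The main obstacle will be justifying that the divisibility of the coefficient group is a clean substitute for root-closedness of the field in the proof of \Cref{5.18 root-closed vanishing}: I must confirm that the good-basis extension argument never multiplied coefficients in a way requiring the full field structure, only the ability to solve $c=c'^n$ in the abelian coefficient group, which divisibility guarantees. A secondary point is the promotion of \Cref{5.15} from $k^*$ to a general divisible coefficient group for part (4); I expect this to go through because the rank-one argument only exploits that $G$ is a nested union of copies of $\mathbb{Z}$ and that $\mathbb{Z}$-gradings force commutativity, neither of which depends on the coefficient group being the units of a field.
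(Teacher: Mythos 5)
Your proposal is correct in substance, but it takes a genuinely different route from the paper. The paper proves only part (1) from \Cref{5.18 root-closed vanishing} and then never touches the proof of that theorem again: it gets (2) from (1) by the purely group-theoretic polar decomposition $\mathbb{C}^*\cong\mathbb{R}_{>0}\oplus\mathbb{R}/\mathbb{Z}$ together with \Cref{5.11} (so vanishing for $\mathbb{C}^*$ forces vanishing for each factor), plus $\mathbb{R}\cong\mathbb{R}_{>0}$ via the exponential; and it gets (3) by observing that $\mathbb{Q}$, $\mathbb{Q}/\mathbb{Z}$, and $\mathbb{Z}[1/p]/\mathbb{Z}$ are direct summands (as abstract abelian groups) of $\mathbb{R}$, $\mathbb{R}/\mathbb{Z}$, and $\mathbb{Q}/\mathbb{Z}$ respectively, again splitting $H^2_S$ along the decomposition. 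You instead generalize \Cref{5.18 root-closed vanishing} itself to divisible coefficient groups. That is a valid and in fact cleaner mechanism --- symmetric cocycles with trivial action classify abelian extensions, so $H^2_S(G,A)\cong\operatorname{Ext}^1_{\mathbb{Z}}(G,A)$, which vanishes for divisible (i.e.\ injective) $A$, and this does not even need $G$ to have finite rank --- but your word ``verbatim'' overstates the ease of the transfer: the paper's proof of \Cref{5.18 root-closed vanishing}, including its base case \Cref{5.13}, is ring-theoretic (good bases of graded fields, Laurent polynomial rings via \Cref{2.4}), and a general divisible group is not the unit group of any field, so you must re-run the whole filtration argument of \Cref{5.17} at the cocycle or extension level: the base case becomes ``abelian extensions by $\mathbb{Z}^d$ split'' (free quotient), and the inductive step becomes adjusting a lift of $f$ by an $n$-th ``root'' in $A$, which is exactly where divisibility enters. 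This rewriting is routine but is real work your proposal defers. On part (4) you are actually more careful than the paper: the paper cites \Cref{5.15}, which is stated only for coefficients $k^*$, and applies it to $\mathbb{Q}$, $\mathbb{Q}/\mathbb{Z}$, $\mathbb{Z}[1/p]/\mathbb{Z}$, which are not unit groups of fields; your plan to reprove $H^2=H^2_S$ for rank-one $G$ at the cocycle level (e.g.\ every cocycle restricted to each $\mathbb{Z}e_i$ in a nested union is a coboundary, hence symmetric, or equivalently the alternating commutator pairing factors through $\wedge^2 G=0$) is precisely the repair needed, and it succeeds. In summary: the paper's route buys economy --- it reuses the field-theoretic vanishing as a black box and needs only soft coefficient-group decompositions --- while your route buys generality and isolates divisibility as the true reason for all of (1)--(4).
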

\begin{proof}
(1) is a consequence of \Cref{5.18 root-closed vanishing}. Now we see the arg-length expression of a nonzero complex number gives group isomorphism $\mathbb{C}^*\cong \mathbb{R}_{>0}\oplus \mathbb{R}/\mathbb{Z}$, thus by \Cref{5.11}, $H^2_S(G,\mathbb{R}_{>0})=H^2_S(G,\mathbb{R}/\mathbb{Z})=0$. And $\mathbb{R}\cong \mathbb{R}_{>0}$ as abelian groups via the exponential map, so (2) is true. (3) is true since as abelian groups, $\mathbb{Q}$ is a direct summand of $\mathbb{R}$, $\mathbb{Q}/\Z$ is a direct summand of $\mathbb{R}/\Z$, and $\mathbb{Z}[1/p]/\mathbb{Z}$ is a direct summand of $\mathbb{Q}/\Z$. (4) is the consequence of (3) and \Cref{5.15}.
\end{proof}
Next we state two torsion properties of the group cohomology. The first theorem is trivial:
\begin{theorem}\label{5.21}
Let $G$ be an abelian group and $k$ is a finite field. Set $r=|k^*|$. Then $H^2(G,k^*)$ is $r$-torsion. Therefore, $H^2_S(G,k^*)$ is also $r$-torsion.
\end{theorem}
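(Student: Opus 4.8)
The plan is to observe that the entire cochain complex computing $H^2(G,k^*)$ already consists of $r$-torsion abelian groups, so the torsion passes immediately to cohomology and to every subquotient. First I would recall that $k^*$ is a finite abelian group of order $r=|k^*|$; by Lagrange's theorem the order of every element of $k^*$ divides $r$, so $r\cdot a=0$ for all $a\in k^*$ when the group law on the trivial $\Z G$-module $k^*$ is written additively. Equivalently, multiplication by $r$ is the zero endomorphism $[r]\colon k^*\to k^*$.

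Next I would use the bar resolution description recorded earlier: $Z^2(G,k^*)$ and $B^2(G,k^*)$ are subgroups of $\Hom_{set}(G\times G,k^*)=\Hom_{\Z G}(B_2,k^*)$. As an abelian group this $\Hom$-group is a product of copies of $k^*$, hence is annihilated by $r$; therefore $Z^2(G,k^*)$ is $r$-torsion and so is its quotient $H^2(G,k^*)=Z^2(G,k^*)/B^2(G,k^*)$. Alternatively, one can argue functorially: since $H^2(G,-)$ is an additive functor, the homomorphism it induces from $[r]\colon k^*\to k^*$ is exactly multiplication by $r$ on $H^2(G,k^*)$, and as $[r]=0$ this forces $r\cdot H^2(G,k^*)=0$. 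Either route gives that $H^2(G,k^*)$ is $r$-torsion.

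For the final assertion I would simply invoke that $H^2_S(G,k^*)$ is, by its definition, the $S_2$-invariant subgroup of $H^2(G,k^*)$; a subgroup of an $r$-torsion abelian group is $r$-torsion, so $H^2_S(G,k^*)$ is $r$-torsion as well. There is no genuine obstacle here, which is consistent with the theorem being labelled trivial: the only point deserving a word of justification is the identification of the induced endomorphism with multiplication by $r$, and this can be sidestepped entirely by the direct cochain-level observation that $Z^2(G,k^*)$ is already $r$-torsion.
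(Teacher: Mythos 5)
Your proof is correct. The paper in fact offers no proof at all for this statement (it is introduced with the remark that ``the first theorem is trivial''), and your cochain-level argument---$Z^2(G,k^*)$ sits inside $\Hom_{set}(G\times G,k^*)$, a product of copies of the $r$-torsion group $k^*$, so the quotient $H^2(G,k^*)=Z^2/B^2$ and its subgroup $H^2_S(G,k^*)$ are $r$-torsion---is exactly the intended justification, so there is nothing to add.
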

\begin{theorem}\label{5.22}
Let $G$ be an abelian group of finite rank, $k$ be a field and $L$ be its root-closed closure. Suppose $n=[L:k]<\infty$. Then $H^2_S(G,k^*)$ is $n$-torsion.
\end{theorem}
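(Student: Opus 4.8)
The plan is to exploit the vanishing $H^2_S(G,L^*)=0$ furnished by \Cref{5.18 root-closed vanishing} and transport it down to $k$ by means of the field norm. Write the abelian groups $k^*$ and $L^*$ additively, regarded as trivial $G$-modules. The two coefficient maps I need are the inclusion $\iota\colon k^*\hookrightarrow L^*$ and the norm $N=N_{L/k}\colon L^*\to k^*$; the latter is a group homomorphism because the field norm is multiplicative and carries nonzero elements to nonzero elements. The key arithmetic input is that for $x\in k^*$ one has $N_{L/k}(x)=x^{[L:k]}=x^n$, since multiplication by $x\in k$ on the $n$-dimensional $k$-vector space $L$ has determinant $x^n$. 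In additive notation this reads $N\circ\iota=[n]$, the multiplication-by-$n$ endomorphism of $k^*$. Note that $L$, being a finite root-closed extension, is algebraic over $k$, so the norm is genuinely available.

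First I would record the elementary functoriality. Any homomorphism of trivial $G$-modules $f\colon A\to B$ induces, by post-composition on cochains $\varphi\mapsto f\circ\varphi$, a map $f_*$ on $Z^2$, $B^2$ and $H^2$, and this map is $S_2$-equivariant: since the $S_2$-action merely permutes the two arguments of $\varphi$ while $f_*$ acts on its values, one checks $\sigma(f_*\varphi)=f_*(\sigma\varphi)$. Hence $f_*$ preserves the invariant subgroups and descends to a map $f_*\colon H^2_S(G,A)\to H^2_S(G,B)$. Moreover the assignment $f\mapsto f_*$ is additive, because $\varphi\mapsto f\circ\varphi$ is additive in $f$; since $[n]=\operatorname{id}+\cdots+\operatorname{id}$ ($n$ summands), the endomorphism of $H^2_S(G,k^*)$ induced by $[n]\colon k^*\to k^*$ is exactly multiplication by $n$. (This is the same componentwise reasoning underlying \Cref{5.11}.)

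Putting these together, I obtain a composite
\[
H^2_S(G,k^*)\xrightarrow{\ \iota_*\ }H^2_S(G,L^*)\xrightarrow{\ N_*\ }H^2_S(G,k^*)
\]
which, by $N\circ\iota=[n]$ and the additivity just noted, equals multiplication by $n$. Since $L$ is root-closed and $G$ has finite rank, \Cref{5.18 root-closed vanishing} gives $H^2_S(G,L^*)=0$, so $\iota_*$ is the zero map and therefore so is the whole composite. Thus $n\cdot c=0$ for every $c\in H^2_S(G,k^*)$, i.e.\ $H^2_S(G,k^*)$ is $n$-torsion.

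The genuinely substantive ingredients are the coefficient-level identity $N\circ\iota=[n]$ and the vanishing \Cref{5.18 root-closed vanishing}; everything else is the routine check that $\iota$ and $N$ are morphisms of trivial $G$-modules compatible with the $S_2$-action. The one point requiring care is that $N_{L/k}$ is defined and multiplicative for a possibly \emph{inseparable} finite extension $L/k$ and that it restricts to $L^*\to k^*$; I would verify this via the determinant-of-multiplication description of the norm, which needs no separability hypothesis.
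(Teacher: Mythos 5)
Your proposal is correct and follows essentially the same route as the paper: both arguments factor the $n$-th power map on $k^*$ as the inclusion $k^*\hookrightarrow L^*$ followed by the norm $N_{L/k}$, observe that the induced maps on cohomology are $S_2$-equivariant (so restrict to $H^2_S$), and then kill the composite using the vanishing $H^2_S(G,L^*)=0$ from \Cref{5.18 root-closed vanishing}. Your additional care about the norm in the inseparable case and the explicit check that $[n]_*$ induces multiplication by $n$ are fine elaborations of steps the paper leaves implicit.
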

\begin{proof}
We observe that the composition $k^*\xhookrightarrow{}L^*\xrightarrow[]{N}k^*$ is equal to the map that raises to $n$-th power where $N$ is the norm map. So the composition of the induced maps
$$H^2(G,k^*)\xrightarrow[]{\varphi_1}H^2(G,L^*)\xrightarrow[]{\varphi_2}H^2(G,k^*)$$
is multiplication by $n$ on $H^2(G,k^*)$. Since the $G$-action on $k^*$ and $L^*$ are trivial, the maps $\varphi_1,\varphi_2$ are $S_2$-equivariant. So for $c \in H^2_S(G,k^*)$, $\varphi_1(c) \in H^2_S(G,L^*)=0$ since $L$ is root-closed. So $nc=\varphi_2\varphi_1(c)=0$.
\end{proof}

Next we show that the addition and scalar multiplication of $H^2_S(G,k^*)$ correspond to the Segre product of two rings and Veronese subrings. Note that since $G$ is abelian, the integer multiple $g \to rg$ are group homomorphisms.
\begin{theorem}
Let $G$ be a group, $k$ be a field. Let $F=k[G,c],F'=k[G,c']$ be $G$-graded fields determined by the class of $c,c' \in Z^2_S(G,k^*)$ respectively. 
\begin{enumerate}
\item Consider the natural $G\times G$-graded ring on $F\otimes_kF'$ by $\deg(x\otimes y)=(\deg(x),\deg(y))$. Let $F\sharp_kF'=(F\otimes_k F')_{\Delta(G)}$ where $\Delta(G)=\{(g,g),g \in G\}\subset G\times G$ is the diagonal subgroup.  Then $F\sharp_kF'=k[G,c+c']$.
\item For $r \in \mathbb{Z}\backslash\{0\}$, consider the group homomorphism $i:rG \to G: rx \to x$, then $F^r=(F_{rG})^i$ is a $G$-graded field. We have $F^r=k[G,rc]$. In particular, let $\pi: G \to G, g \to -g$ and $F'=F^{\pi}$, then $F'=k[G,-c]$.
\item $F\sharp_kF'$ is a direct summand of $F\otimes_kF'$ and $F^r$ is a direct summand of $F$.
\end{enumerate}
\end{theorem}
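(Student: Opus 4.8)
The plan is to work throughout with the explicit homogeneous $k$-bases $\{x_g\}_{g\in G}$ of $F$ and $\{x'_g\}_{g\in G}$ of $F'$, chosen so that $x_gx_h=c_{g,h}x_{g+h}$ and $x'_gx'_h=c'_{g,h}x'_{g+h}$, and in each case to exhibit an explicit homogeneous $k$-basis of the ring in question whose structure constants are \emph{visibly} the claimed cocycle. The point I would emphasize first is that, since $c,c'\in Z^2_S(G,k^*)$, both $F$ and $F'$ are commutative (by the characterization of commutativity via the symmetric invariant cohomology proved above); hence $F\otimes_kF'$ is commutative and, more importantly for part (2), powers of homogeneous elements behave multiplicatively. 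This commutativity is what drives all the computations. For part (1), note $F\otimes_kF'=\oplus_{(g,h)}k(x_g\otimes x'_h)$ is one-dimensional in each bidegree, so $F\sharp_kF'=(F\otimes_kF')_{\Delta(G)}=\oplus_{g}k(x_g\otimes x'_g)$. Setting $y_g=x_g\otimes x'_g$ and identifying $\Delta(G)\cong G$, I would compute $y_gy_h=(x_gx_h)\otimes(x'_gx'_h)=c_{g,h}c'_{g,h}\,y_{g+h}$, so the structure constants are $c_{g,h}c'_{g,h}$, which in the additive notation for $H^2_S(G,k^*)$ is the class $c+c'$; thus $F\sharp_kF'=k[G,c+c']$.

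For part (2), the regrading $F^r=(F_{rG})^i$ has degree-$g$ component $(F_{rG})_{rg}=F_{rg}=kx_{rg}$, so $\{x_{rg}\}_{g\in G}$ is the obvious homogeneous basis. The key idea, however, is to replace it by the \emph{power basis} $\tilde z_g:=x_g^{\,r}$. Since $x_g$ is a unit of the graded field $F$ lying in degree $g$, the element $x_g^{\,r}$ is a nonzero homogeneous element of degree $rg$ for every $r\in\Z\setminus\{0\}$, hence differs from $x_{rg}$ by a scalar in $k^*$ and is again a $k$-basis of $(F^r)_g=F_{rg}$. Using commutativity of $F$ I then obtain $\tilde z_g\tilde z_h=x_g^{\,r}x_h^{\,r}=(x_gx_h)^r=(c_{g,h}x_{g+h})^r=c_{g,h}^{\,r}\,\tilde z_{g+h}$, so the structure constants are exactly $c_{g,h}^{\,r}$, i.e. the class $rc$, proving $F^r=k[G,rc]$. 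Taking $r=-1$ specializes to $F^\pi=k[G,-c]$, the inverse class; this is consistent with the fact that $F\sharp_kF^\pi$ should be the group algebra $k[G]=k[G,0]$, which serves as a useful sanity check.

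For part (3), both direct-summand assertions are immediate instances of \Cref{3.7cyclicpurelemma1}(1). Applying that lemma to the $G\times G$-graded ring $F\otimes_kF'$ and the subgroup $\Delta(G)\subset G\times G$ shows the inclusion $(F\otimes_kF')_{\Delta(G)}=F\sharp_kF'\hookrightarrow F\otimes_kF'$ splits; applying it to the $G$-graded ring $F$ and the subgroup $rG\subset G$ shows $F_{rG}$ is a direct summand of $F$, and $F^r$ is just $F_{rG}$ regraded, so the same splitting works.

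The step that requires the genuine idea is part (2). With the naive basis $\{x_{rg}\}$ the structure constants are $(g,h)\mapsto c_{rg,rh}$, i.e. the pullback $\mu_r^*c$ of $c$ along multiplication-by-$r$, and it is not a priori clear that this class equals $rc$ rather than some other multiple — indeed on all of $H^2(G,k^*)$ the alternating (commutator) part of a cocycle scales by $r^2$ under $\mu_r^*$, not by $r$. The resolution is precisely the passage to the power basis $x_g^{\,r}$, which is legitimate only because $c\in Z^2_S$ forces $F$ to be commutative; this both delivers the cocycle $c^{\,r}$ on the nose and makes transparent why the symmetric hypothesis is exactly what is needed for the Veronese operation to realize scalar multiplication on $H^2_S(G,k^*)$.
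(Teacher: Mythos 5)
Your proposal is correct and follows essentially the same route as the paper's proof: it exhibits the homogeneous bases $x_g\otimes x'_g$ for the Segre product and the power basis $x_g^{r}$ for the Veronese ring, invokes commutativity (guaranteed by $c,c'\in Z^2_S$) to compute the structure constants $c_{g,h}c'_{g,h}$ and $c_{g,h}^{r}$, and deduces part (3) from the splitting of $R_{G'}\hookrightarrow R$ for a subgroup $G'$. Your closing remark contrasting the power basis with the naive basis $\{x_{rg}\}$ is a useful clarification not spelled out in the paper, but the underlying argument is the same.
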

\begin{proof}
We write $F=\oplus_g kx_g$, $x_gx_h=c_{g,h}x_{g+h}$, $F'=\oplus_g ky_g$, $y_gy_h=c'_{g,h}y_{g+h}$, and let $r$ be a nonzero integer. By the commutativity of $F,F'$, $F\sharp_k F'$ has a graded vector space decomposition $F\sharp_k F'=\oplus_{g}kx_gy_g$ with $x_gy_gx_hy_h=c_{g,h}c'_{g,h}x_{g+h}y_{g+h}$. $F^r$ has a graded vector space decomposition $F^r=\oplus_g kx^r_g$ with $x^r_gx^r_h=c^r_{g,h}x^r_{g+h}$. So we are done. 
\end{proof}
\begin{remark}
We also get the following example where the tensor product of two Noetherian $k$-domains over $k$ is a non-Noetherian $k$-domain: let $G$ be an infinitely generated abelian group of finite rank, let $k$ be a field such that there is a Noetherian class $c \in H^2_S(G,k^*)$. Let $F,F'$ be the $G$-graded field determined by the class $c,-c$ respectively, then $F,F'$ are two Noetherian $k$-domains. Consider the tensor product $F\otimes_kF'$, then it is a $k$-domain. It cannot be Noetherian because it has a non-Noetherian direct summand $F\sharp_k F'\cong k[G]$.     
\end{remark}
\section{Noetherian classes in group cohomology}
We see in Section 4 there is a Noetherian $G$-graded field for every torsion-free group $G$ of finite rank. We will discuss the set of classes in second group cohomology corresponding to these graded fields.
\begin{definition}
Let $G$ be a group, $k$ be a field. We say a class $c \in H^2_S(G,k^*)$ is a Noetherian class, if $k[G,c]$ is Noetherian.    
\end{definition}
First we prove some existence and non-existence properties of Noetherian classes.
\begin{proposition}\label{6.2}
Let $G$ be a torsion-free group of finite rank, $k$ be a field.
\begin{enumerate}
\item If $G$ is finite free, then there is only one class in $H^2_S(G,k^*)$, namely $0$, which is Noetherian. If $G$ is not finitely generated, then $0$ is not a Noetherian class.
\item Let $G$ be any group of finite rank and $k=k_0(\alpha_i,i \in \mathbb{N})$ where $char(k_0)=0$. Then $H^2_S(G,k^*)$ contains at least one Noetherian class, and if $G$ is not finite free, then it cannot be the $0$ class.
\end{enumerate}    
\end{proposition}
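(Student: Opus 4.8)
The plan is to deduce both parts directly from the structural results of Sections~4 and 5, since the essential construction has already been carried out. For part (1), I would first dispose of the finite free case: if $G\cong\Z^n$ then \Cref{5.13} gives $H^2_S(G,k^*)=0$, so the only class is $0$; this class corresponds to the group algebra $k[G]$, which is the Laurent polynomial ring $k[x_1,\ldots,x_n,x_1^{-1},\ldots,x_n^{-1}]$ and hence Noetherian. When $G$ is not finitely generated, the $0$ class again corresponds to $k[G]$, but now \Cref{5.8} shows $k[G]$ is Noetherian if and only if $G$ is finitely generated, so $0$ is not a Noetherian class.

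For part (2), write $k=k_0(\alpha_i,i\in\mathbb{N})$ with $\car(k_0)=0$ and apply \Cref{4.8} to the base field $k_0$. This yields a Noetherian $G$-graded field $R$ with $\Supp(R)=G$ and $R_0=k_0(\alpha_i,i\in\mathbb{N})=k$. By \Cref{5.6} such an $R$ is isomorphic to $k[G,c]$ for a unique $c\in H^2(G,k^*)$, and because $R$ is commutative this class is $S_2$-invariant, i.e.\ $c\in H^2_S(G,k^*)$. Since $R$ is Noetherian, $c$ is by definition a Noetherian class, which establishes the existence claim.

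It remains to see that this class is nonzero whenever $G$ is not finite free. Here I would use that a torsion-free abelian group of finite rank is finitely generated exactly when it is finite free; thus $G$ not finite free forces $G$ to be infinitely generated, and part (1) then shows the $0$ class is not Noetherian. Consequently the Noetherian class produced above cannot be $0$. I do not anticipate any genuine difficulty: all the weight of the argument sits in the construction of \Cref{4.8} and in the dictionary \Cref{5.6} between graded fields and $H^2_S$. The one point deserving care is the bookkeeping of base fields---applying \Cref{4.8} to $k_0$ rather than to $k$ itself, so that the degree-zero component becomes precisely the prescribed field $k$.
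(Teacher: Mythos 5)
Your proof is correct and takes essentially the same route as the paper, whose entire proof is the one-line remark that the proposition is a reformulation of \Cref{4.4} (the Noetherian construction) and \Cref{5.8} (the group algebra is Noetherian iff $G$ is finitely generated). Your write-up simply makes explicit the steps the paper compresses: \Cref{5.13} for the finite free case, \Cref{4.8} applied to $k_0$ so that the degree-zero part is exactly $k$, and the dictionary of \Cref{5.6} identifying the resulting Noetherian graded field with a class in $H^2_S(G,k^*)$.
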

\begin{proof}
This is a reformulation of \Cref{4.4} and \Cref{5.8}.    
\end{proof}
\begin{theorem}
Let $k$ be a field of characteristic $0$, $R(k,1)$ be as in \Cref{4.1 main example}, and $K=\Frac (R(k,1))$. Let $G$ be any abelian group of finite rank. Then $H^2_S(G,K^*)$ has a Noetherian class.   
\end{theorem}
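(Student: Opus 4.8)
The plan is to identify the field $K=\Frac(R(k,1))$ explicitly and to recognize it as a purely transcendental extension of $k$ of countably infinite transcendence degree; once we know $K\cong k(\beta_i:i\in\mathbb{N})$, the statement is immediate from \Cref{6.2}(2) applied with $k_0=k$ (which has characteristic $0$), because $H^2_S(G,-)$ and the property of being a Noetherian class depend only on the isomorphism type of the coefficient field.

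To carry this out, I would first describe $K$ as the increasing union of the fraction fields of the terms in the direct system of \Cref{4.1 main example}. Writing $K_0=R(k,1)_0=k(\alpha_i:i\in\mathbb{N})$ and $R_i=K_0[t_i,t_i^{-1}]$, the transition map $\phi_i$ identifies $t_i$ with $\alpha_i^{-1}t_{i+1}^{p_i}$, so $K=\bigcup_i K_0(t_i)$ with $K_0(t_i)\subseteq K_0(t_{i+1})$. Inverting this relation gives $\alpha_i=t_{i+1}^{p_i}t_i^{-1}$, so every $\alpha_i$ lies in $k(t_i,t_{i+1})$; hence $K_0\subseteq k(\{t_i:i\in\mathbb{N}\})$ and therefore $K=k(\{t_i:i\in\mathbb{N}\})$.

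It then remains to verify that the $t_i$ are algebraically independent over $k$, which I expect to be the only step requiring care. For this I would fix $n$ and compare $k(t_1,\ldots,t_n)$ with $E_n:=k(\alpha_1,\ldots,\alpha_{n-1},t_n)$. The latter is purely transcendental over $k$ of transcendence degree $n$, since $\{\alpha_1,\ldots,\alpha_{n-1}\}$ is part of the defining family of indeterminates and $t_n$ is transcendental over $K_0$. Applying $t_i=\alpha_i^{-1}t_{i+1}^{p_i}$ repeatedly shows $t_1,\ldots,t_n\in E_n$, while $\alpha_i=t_{i+1}^{p_i}t_i^{-1}$ shows $\alpha_1,\ldots,\alpha_{n-1},t_n\in k(t_1,\ldots,t_n)$; hence $k(t_1,\ldots,t_n)=E_n$ has transcendence degree $n$. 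A field generated over $k$ by $n$ elements and of transcendence degree $n$ must have those $n$ generators algebraically independent, so $t_1,\ldots,t_n$ are algebraically independent; letting $n$ vary yields the claim. Consequently $K\cong k(\beta_i:i\in\mathbb{N})$, and \Cref{6.2}(2) produces a Noetherian class in $H^2_S(G,K^*)$.

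The main obstacle is essentially bookkeeping: one must be sure that the passage from the $\alpha$-generators to the $t$-generators is a genuine change of transcendence basis and does not conceal an algebraic relation among the $t_i$, which is exactly what the transcendence-degree count rules out. Everything else — the description of the direct limit, the invariance of $H^2_S$ and of Noetherian classes under a field isomorphism, and the final appeal to \Cref{6.2}(2) — is routine.
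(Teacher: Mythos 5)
Your proposal is correct, but it takes a genuinely different route from the paper's own proof. The paper never identifies $K$ explicitly: it reduces to $G=\mathbb{Q}^d$, takes $F=R(k,d+1)$ regraded along the projection $\mathbb{Q}^{d+1}\to\mathbb{Q}^d$ forgetting the first coordinate (so that the new degree-zero part is a copy of $R(k,1)$), and then base changes, observing that $F'=F\otimes_{R(k,1)}K$ is a localization of the Noetherian ring $F$ with each $F'_g$ one-dimensional over $K$; hence $F'$ is a Noetherian $\mathbb{Q}^d$-graded field over $K$ and yields the desired class. You instead prove a structural fact the paper never records: $\Frac(R(k,1))$ is purely transcendental over $k$ of countably infinite transcendence degree, after which \Cref{6.2}(2) (equivalently \Cref{4.8}) applies, since $H^2_S(G,-)$ and Noetherianity of a class are invariant under isomorphism of the coefficient field. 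Your transcendence bookkeeping is sound: $k(t_1,\ldots,t_n)=k(\alpha_1,\ldots,\alpha_{n-1},t_n)$ has transcendence degree exactly $n$, forcing $t_1,\ldots,t_n$ to be algebraically independent, and $\alpha_i=t_{i+1}^{p_i}t_i^{-1}$ gives $K=k(t_i:i\in\mathbb{N})$; with the alternative indexing of \Cref{4.1 main example}, where the transition map uses $\alpha_{i+1}$ rather than $\alpha_i$, the generator $\alpha_1$ simply survives as one extra indeterminate, $K=k(\alpha_1,t_1,t_2,\ldots)$, which changes nothing. As for what each approach buys: the paper's argument is constructive and does not depend on $K$ having any special form, so the same base-change technique works over the fraction field of the degree-zero part of any regraded Noetherian graded field; your argument is shorter, reduces everything to results already proved, and in fact supplies a detail the paper elides --- the degree-zero part of the regraded $R(k,d+1)$ is literally $R(k''',1)$ with $k'''=k(\alpha_{j,i}\colon 2\le j\le d+1,\, i\in\mathbb{N})$ rather than $R(k,1)$ itself, and the cleanest way to identify $\Frac(R(k''',1))$ with $K$ is precisely your observation that both are purely transcendental over $k$ of countably infinite transcendence degree.
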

\begin{proof}
It suffices to prove when $G=\mathbb{Q}^d$. Construct $F=R(k,d+1)$ and view it as $\mathbb{Q}^d$-graded ring by ignoring the first component, then $F_0=R(k,1)$, and $F$ is Noetherian. So $F'=F\otimes_{R(k,1)}K$ is a localization of Noetherian ring, hence is Noetherian. We see $F_g$ is a free $R(k,1)$-module of rank $1$, so $F'_g$ is a one-dimensional $K$-vector space, hence $F'$ is a $G$-graded field over $K$ and we are done. 
\end{proof}

\begin{lemma}\label{6.4}
Let $G$ be an abelian group of rank $d$ and $r$ be a positive integer. Then $|G/rG|\leq r^d$.    
\end{lemma}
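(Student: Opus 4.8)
The plan is to localize the problem to finitely generated subgroups, where the structure theorem for finitely generated abelian groups gives an exact count. Since $G$ is torsion-free of rank $d$, it embeds into $\Q^d = \Q\otimes_\Z G$; consequently every finitely generated subgroup $H \subseteq G$ is torsion-free with $\rank_\Z H \le d$, so by the structure theorem $H$ is free of some rank $e \le d$, and therefore $H/rH \cong (\Z/r\Z)^e$ has order $r^e \le r^d$. This is the one clean estimate that the whole argument rests on.

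The key observation is that every element of $G/rG$ comes from such an $H$, so I would argue by contradiction. Suppose $|G/rG| > r^d$ and pick $r^d+1$ pairwise distinct classes in $G/rG$, lifting them to elements $g_1,\ldots,g_{r^d+1} \in G$. Let $H=\langle g_1,\ldots,g_{r^d+1}\rangle$ be the (finitely generated) subgroup they generate. The composite $H \hookrightarrow G \to G/rG$ has image $(H+rG)/rG \cong H/(H\cap rG)$, which is a quotient of $H/rH$ because $rH \subseteq H\cap rG$; hence this image has order at most $r^e \le r^d$. But the $r^d+1$ chosen classes are distinct and all lie in this image, which is the desired contradiction. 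Therefore $|G/rG| \le r^d$.

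The only step needing care is the identification of the image of $H$ in $G/rG$ as a quotient of $H/rH$, which is immediate from the inclusion $rH \subseteq H\cap rG$ and hence inherits the bound $r^e \le r^d$. I do not anticipate any real obstacle: the content is precisely the reduction of the estimate to finitely generated subgroups, where freeness forces $|H/rH| = r^{\rank_\Z H} \le r^d$. Alternatively, one could run the same bound through the exhaustion $G = \varinjlim_i G_i$ of \Cref{5.17} by free rank-$d$ subgroups, noting that $G/rG$ is the increasing union of the images of the $G_i/rG_i \cong (\Z/r\Z)^d$; each such image has order at most $r^d$, and an increasing union of subgroups of bounded order stabilizes, giving the same conclusion.
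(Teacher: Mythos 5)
Your proof is correct and is essentially the paper's argument: both reduce to the fact that finitely many classes in $G/rG$ lie in the image of a finitely generated subgroup of $G$, which (being torsion-free of rank at most $d$) is free of rank $e\le d$ and therefore contributes at most $r^e\le r^d$ classes, after which pigeonhole finishes. The only difference is packaging — the paper writes $G$ as an $\mathbb{N}$-indexed union of free rank-$d$ subgroups $G_i$ and invokes exactness of direct limits to get $G/rG=\varinjlim_i G_i/rG_i$, whereas you generate $H$ from lifts of the chosen classes and use $rH\subseteq H\cap rG$ to see that the image of $H$ is a quotient of $H/rH$; indeed, the alternative you sketch at the end is precisely the paper's proof.
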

\begin{proof}
We can realize $G$ as a direct limit of finitely generated abelian groups $G_i$ of rank $d$ indexed by $\mathbb{N}$. In particular, $G_i \cong \mathbb{Z}^d$. Since taking direct limit is an exact functor, we see $G/rG \cong \underset{i}{\varinjlim}G_i/rG_i=\underset{i}{\varinjlim}(\Z/r\Z)^d$ where we omit the maps between different copies of $(\Z/r\Z)^d$'s. Note that $|(\Z/r\Z)^d|=r^d$. Thus, if we pick more than $r^d$ elements in $G/rG$, then their images lie in one $(\Z/r\Z)^d$, so two of them coincide in this copy of $(\Z/r\Z)^d$ and they still coincide in the direct limit $G/rG$. Thus $G/rG$ has at most $r^d$ elements.    
\end{proof}
\begin{theorem}\label{6.5 Noetherian class multiple}
Let $G$ be an abelian group of finite rank, $k$ be a field, $F$ be a $G$-graded field over $k$, $r$ be a nonzero integer.
\begin{enumerate}
\item $F$ is a finitely generated $F^r$-algebra.
\item $F$ is Noetherian if and only if $F^r$ is Noetherian.
\item A class $c \in H^2_S(G,k^*)$ is Noetherian if and only if $rc$ is Noetherian.
\item If $G$ is infinitely generated, then any Noetherian class is torsion-free.
\item Let $c'\in H^2_S(G,k^*)$ be a torsion class, then $c$ is Noetherian if and only if $c+c'$ is Noetherian.
\end{enumerate}
\end{theorem}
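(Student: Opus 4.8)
The plan is to prove the five assertions in sequence, with the real content concentrated in part (1); once (1) is available, the remaining items follow either from standard descent of Noetherianity or from the Veronese identification $F^r=k[G,rc]$ established in Section 5.

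For (1), I would first invoke \Cref{6.4} to note that $rG$ has finite index in $G$, say $G/rG=\{\bar g_1,\dots,\bar g_m\}$ with $m\le r^d$. Choosing a nonzero homogeneous element $x_{g_i}\in F_{g_i}$ for each coset representative $g_i$, I claim $F=\bigoplus_{i=1}^m F_{rG}\,x_{g_i}$ as an $F_{rG}$-module. Indeed, any homogeneous degree $g$ can be written $g=g_i+rh$ for a suitable representative $g_i$ and some $h\in G$; since $F$ is a graded field, every nonzero homogeneous element is invertible, so for $z\in F_g$ we have $zx_{g_i}^{-1}\in F_{rh}\subseteq F_{rG}$, whence $F_g=F_{rG}\,x_{g_i}$. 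As the cosets are disjoint these summands are supported on disjoint sets of degrees, so the sum is direct and $F$ is a free $F_{rG}$-module of rank $m$; in particular it is finitely generated as an $F_{rG}$-algebra. Recalling that $F^r$ is the ring $F_{rG}$ equipped with a regrading, this is precisely (1). (In fact this shows the stronger statement that $F$ is module-finite, indeed finite free, over $F^r$.)

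Part (2) then splits into two implications. If $F^r$ is Noetherian, then $F$, being a finitely generated algebra over it by (1), is Noetherian by the Hilbert basis theorem. Conversely, $F^r$ equals $F_{rG}$ as a ring, and by \Cref{3.7cyclicpurelemma1} the inclusion $F_{rG}\to F$ splits, hence is cyclic pure; so \Cref{3.8cyclicpurelemma2} lets Noetherianity of $F$ descend to $F^r$. Part (3) is now a translation: taking $F=k[G,c]$, the Segre--Veronese theorem of Section 5 gives $F^r=k[G,rc]$, so ``$rc$ Noetherian'' means ``$F^r$ Noetherian'', which by (2) is equivalent to ``$F$ Noetherian'', i.e.\ ``$c$ Noetherian''.

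Finally, for (4) I would argue by contradiction: if a Noetherian class $c$ were torsion, then $rc=0$ for some nonzero $r$, and (3) would force $0$ to be a Noetherian class; but when $G$ is infinitely generated $k[G,0]=k[G]$ is the group algebra, which is non-Noetherian by \Cref{5.8} (equivalently \Cref{6.2}(1)), a contradiction, so $c$ has infinite order. For (5), choosing $r\neq 0$ with $rc'=0$ gives $r(c+c')=rc$, so applying (3) twice yields that $c+c'$ is Noetherian iff $rc$ is Noetherian iff $c$ is Noetherian. The main obstacle is the finite-generation claim (1): the key points are that finiteness of rank makes the index $[G:rG]$ finite via \Cref{6.4}, and that the graded-field hypothesis supplies invertible homogeneous elements, so that finitely many coset representatives already generate $F$ over $F^r$.
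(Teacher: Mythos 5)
Your proposal is correct and takes essentially the same route as the paper: finiteness of $[G:rG]$ via \Cref{6.4}, coset representatives (invertible because $F$ is a graded field) generating $F$ over $F^r=F_{rG}$, the split inclusion $F_{rG}\to F$ with \Cref{3.7cyclicpurelemma1} and \Cref{3.8cyclicpurelemma2} for descent in (2), the identification $k[G,c]^r=k[G,rc]$ for (3), and the identical torsion arguments for (4) and (5). Your part (1) is in fact slightly sharper than the paper's, since the decomposition $F=\bigoplus_i F_{rG}x_{g_i}$ shows $F$ is finite free over $F^r$ rather than merely a finitely generated algebra, but this refinement does not change the structure of the argument.
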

\begin{proof}
We may assume $r>0$, otherwise we may replace $F$ by $F^{-1}$ which is isomorphic to $F$ as rings. Now we see $F^r=F_{rG}$ as rings. Let $\Lambda$ be a set of representatives of $G/rG$, then $|\Lambda|<\infty$ by \Cref{6.4}. Now we see $F=F_{rG}[x_g|g \in \Lambda]$, so it is finitely generated over $F^r$, and (1) is true. For (2), by finite generation, $F^r$ being Noetherian implies that $F$ is Noetherian. But the map $F^r \to F$ splits, so $F$ being Noetherian also implies that $F^r$ is Noetherian. (3) is a reformulation of (2) as $k[G,c]^r=k[G,rc]$. For (4), we see a nonzero integer multiple of a Noetherian class is also Noetherian and since $G$ is infinitely generated, $0$ is not Noetherian. So any nonzero integer multiple of a Noetherian class is nonzero, which means that a Noetherian class is torsion-free. For (5), assume $c'$ is $r$-torsion, then $c$ is Noetherian if and only if $rc=r(c+c')$ is Noetherian if and only if $c+c'$ is Noetherian.
\end{proof}
\begin{corollary}\label{6.6}
Let $G$ be an abelian group of finite rank. Assume one of the following holds:
\begin{enumerate}
\item $k$ is root-closed.
\item $k$ is finite.
\item $[L:k]<\infty$ where $L$ is the root-closure of $k$.
\end{enumerate}
Then $H^2_S(G,k^*)$ is either $0$ or $r$-torsion for some $r$. In particular, we have:
\begin{enumerate}
\item[(a)] For any $G$-graded field $F$ with $F_0=k$, $F^r=k[G]$ for some $r$.
\item[(b)] Moreover, if $G$ is not finitely generated, then $F$ is not Noetherian. So there is no Noetherian class in $H^2_S(G,k^*)$.
\end{enumerate}
\end{corollary}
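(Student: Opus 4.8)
The plan is to read this corollary as a packaging of the torsion results of Section~5 together with the Veronese description of scalar multiplication, so the work is almost entirely bookkeeping. First I would establish the dichotomy that $H^2_S(G,k^*)$ is either $0$ or $r$-torsion by matching each hypothesis to its source: hypothesis~(1), $k$ root-closed, gives $H^2_S(G,k^*)=0$ immediately by \Cref{5.18 root-closed vanishing}; hypothesis~(2), $k$ finite, gives that $H^2_S(G,k^*)$ is $r$-torsion with $r=|k^*|$ by \Cref{5.21}; and hypothesis~(3) gives that it is $n$-torsion with $n=[L:k]$ by \Cref{5.22}. Absorbing the vanishing case by allowing $r=1$, in every case there is an integer $r\ge 1$ annihilating $H^2_S(G,k^*)$.

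Next I would prove (a). By \Cref{5.6}, any $G$-graded field $F$ with $F_0=k$ equals $k[G,c]$ for a unique class $c\in H^2_S(G,k^*)$, and $rc=0$ by the previous step. The Veronese identity $k[G,c]^r=k[G,rc]$, established in Section~5 and already used in the proof of \Cref{6.5 Noetherian class multiple}, then yields
\[
F^r=k[G,rc]=k[G,0]=k[G],
\]
the group algebra, which is exactly~(a).

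Finally, for (b) I would chain two equivalences. By \Cref{6.5 Noetherian class multiple}(2), $F$ is Noetherian if and only if $F^r$ is Noetherian, and by \Cref{5.8}, $k[G]$ is Noetherian if and only if $G$ is finitely generated. Since $F^r=k[G]$ by~(a) and $G$ is infinitely generated, $k[G]$ is not Noetherian, hence $F$ is not Noetherian. As this applies to every $G$-graded field with $F_0=k$, no class $c\in H^2_S(G,k^*)$ is Noetherian. Equivalently, every class here has finite order, while \Cref{6.5 Noetherian class multiple}(4) forces any Noetherian class to be torsion-free, which is impossible in a torsion group.

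I do not anticipate a genuine obstacle: all the substantive content lives in \Cref{5.18 root-closed vanishing}, \Cref{5.21}, \Cref{5.22}, and \Cref{5.8}, and the corollary only threads them together. The only points demanding care are the harmless unification of the $H^2_S(G,k^*)=0$ case into the ``$r$-torsion'' statement by permitting $r=1$, and applying the Veronese identity to a chosen cocycle representative rather than to the class $c$ itself.
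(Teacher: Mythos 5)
Your proposal is correct and follows essentially the same route as the paper, whose proof of this corollary is simply a citation of \Cref{5.18 root-closed vanishing}, \Cref{5.21}, \Cref{5.22}, and \Cref{6.5 Noetherian class multiple}; you have merely made explicit the threading (via the Veronese identity $k[G,c]^r=k[G,rc]$ and \Cref{5.8}) that the paper leaves to the reader.
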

\begin{proof}
This is true by \Cref{5.18 root-closed vanishing}, \Cref{5.21}, \Cref{5.22}, and \Cref{6.5 Noetherian class multiple}.  
\end{proof}

We also check the behavior of Noetherian classes when we have different groups or different base fields. The following lemmas are straightforward to check.
\begin{lemma}
Let $H \subset G$ be an extension of abelian groups and $A$ an abelian group with trivial $G$-action. Then there is a natural restriction map $\Hom_{Set}(G\times G, A) \to \Hom_{Set}(H\times H,A)$ which is $S_2$-equivariant and preserves $Z^2$, $B^2$. Thus there are induced natural maps from $Z^2(G,A)$,
$B^2(G,A)$, $H^2(G,A)$, $Z^2_S(G,A)$, $B^2_S(G,A)$, $H^2_S(G,A)$ to $Z^2(H,A)$,
$B^2(H,A)$, $H^2(H,A)$, $Z^2_S(H,A)$, $B^2_S(H,A)$, $H^2_S(H,A)$ respectively. 
\end{lemma}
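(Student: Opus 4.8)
The plan is to take the obvious restriction of functions and verify that it respects every piece of structure in sight. Concretely, since $H\times H \subset G\times G$, I would define
$$\mathrm{res}\colon \Hom_{Set}(G\times G, A) \to \Hom_{Set}(H\times H,A), \qquad \mathrm{res}(\varphi) = \varphi|_{H\times H}.$$
This is manifestly a homomorphism of abelian groups, since restriction is compatible with the pointwise addition of $A$-valued functions. For $S_2$-equivariance, recall the action is $\sigma\varphi(g,h)=\varphi(h,g)$; because the coordinate swap $(h_1,h_2)\mapsto(h_2,h_1)$ preserves the subset $H\times H$, one checks directly that $\mathrm{res}(\sigma\varphi)(h_1,h_2)=\varphi(h_2,h_1)=\sigma(\mathrm{res}\,\varphi)(h_1,h_2)$, so $\mathrm{res}\circ\sigma = \sigma\circ\mathrm{res}$.

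Next I would verify that $\mathrm{res}$ carries $Z^2(G,A)$ into $Z^2(H,A)$ and $B^2(G,A)$ into $B^2(H,A)$, using the explicit descriptions from the remark following the bar resolution. The key observation, and the only point requiring any care, is that $H$ is a \emph{subgroup}: if $f,g,h\in H$ then $f+g,\ g+h \in H$ as well. Hence if $\varphi$ satisfies the cocycle identity $\varphi(g,h)-\varphi(f+g,h)+\varphi(f,g+h)-\varphi(f,g)=0$ for all triples in $G$ (the $G$-action being trivial), then in particular it holds for all triples in $H$, so $\mathrm{res}(\varphi)$ is a cocycle; the normalization $\varphi(0,g)=\varphi(g,0)$ restricts likewise. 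Similarly, if $\varphi(f,g)=\beta(g)-\beta(f+g)+\beta(f)$ for some $\beta\colon G\to A$, then $\mathrm{res}(\varphi)$ is the coboundary of $\beta|_H\colon H\to A$, again because $f+g\in H$ whenever $f,g\in H$. Thus both $Z^2$ and $B^2$ are preserved.

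Finally, these three facts combine formally. Since $\mathrm{res}$ is $S_2$-equivariant and preserves $Z^2$ and $B^2$, it sends the invariant subgroups $Z^2_S(G,A)=Z^2(G,A)^{S_2}$ and $B^2_S(G,A)$ into $Z^2_S(H,A)$ and $B^2_S(H,A)$; and since it preserves both $Z^2$ and its subgroup $B^2$, it descends to $H^2(G,A)\to H^2(H,A)$, and likewise on the invariant quotients to $H^2_S(G,A)\to H^2_S(H,A)$. This yields all six induced maps, and naturality in the inclusion $H\subset G$ is immediate from the transitivity of restriction. There is no genuine obstacle here: the entire content is the elementary remark that a subgroup is closed under the group operation, which is exactly what lets the cocycle and coboundary conditions survive restriction, while the $S_2$-equivariance is automatic because the coordinate swap fixes the subset $H\times H$.
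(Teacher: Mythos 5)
Your proof is correct and is exactly the routine verification the paper has in mind: the paper offers no written proof, stating only that this lemma is ``straightforward to check,'' and your argument—restriction of functions, $S_2$-equivariance via the swap preserving $H\times H$, closure of $H$ under addition to preserve the cocycle and coboundary identities, and then passage to invariants and quotients—is the intended straightforward check, carried out correctly.
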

\begin{lemma}
Let $c \in H^2(G,k^*)$ and denote its image by $\bar{c} \in H^2(H,k^*)$. Then $k[H,\bar{c}]=k[G,c]_{H}$.
\end{lemma}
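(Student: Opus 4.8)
The plan is to unwind both sides to their defining structure constants and observe that these agree after restriction; this is a definition chase with no analytic content. First I would fix a cocycle $\tilde{c} \in Z^2(G,k^*)$ lifting $c$, so that by the definition of $k[G,c]$ (together with \Cref{5.6}) the ring $F = k[G,c]$ is the $k$-vector space $\oplus_{g \in G} k x_g$ with multiplication $x_g x_h = \tilde{c}_{g,h}\, x_{g+h}$.

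Next I would identify the right-hand side. Applying the notation $R_{G'} = \oplus_{g \in G'} R_g$ with $G' = H$, we get $k[G,c]_H = \oplus_{h \in H} F_h = \oplus_{h \in H} k x_h$. Because $H$ is a subgroup, $h+h' \in H$ whenever $h,h' \in H$, so this $k$-subspace is closed under multiplication and is an $H$-graded subring whose structure constants are exactly the values $\tilde{c}_{h,h'}$ for $(h,h') \in H \times H$, i.e.\ the restriction $\tilde{c}|_{H \times H}$.

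Then I would invoke the restriction map of the preceding lemma: it sends a set function $G \times G \to k^*$ to its restriction to $H \times H$, and preserves $Z^2$, so $\tilde{c}|_{H \times H} \in Z^2(H,k^*)$ is a lift of $\bar{c}$. Taking this restricted cocycle as the distinguished lift defining $k[H,\bar{c}] = \oplus_{h \in H} k y_h$ with $y_h y_{h'} = \tilde{c}_{h,h'}\, y_{h+h'}$, the $k$-linear, degree-preserving map $y_h \mapsto x_h$ is manifestly an isomorphism of $H$-graded rings. By \Cref{5.6} the isomorphism class of $k[H,\bar{c}]$ does not depend on the choice of lift of $\bar{c}$, so this identification is canonical and independent of the choices made.

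The only point requiring care — and the nearest thing to an obstacle — is the interpretation of the stated equality: both sides are constructed from literally the same cocycle data, with $k[G,c]_H$ using $\tilde{c}$ restricted to $H \times H$ and $k[H,\bar{c}]$ using a lift of $\bar{c}$, and the content of the claim is precisely that the restriction map on cochains is function-restriction, so a single $\tilde{c}$ serves both. Once this is observed, the equality is the canonical identification of $H$-graded rings described above, and there is no further computation to perform.
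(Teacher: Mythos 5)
Your proof is correct and is exactly the definition chase the paper has in mind: the paper offers no written proof (it declares this lemma ``straightforward to check''), and your argument --- lift $c$ to a cocycle $\tilde{c}$, note via the preceding lemma that the restriction map on cochains is literal function-restriction so $\tilde{c}|_{H\times H}$ lifts $\bar{c}$, and match structure constants on $\oplus_{h\in H} k x_h$ --- is precisely that check. Your care about the equality being an identification of $H$-graded rings independent of the choice of lift (via \Cref{5.6}) is a welcome clarification, not a deviation.
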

\begin{lemma}
Let $G$ be an abelian group, $A,B$ be two abelian groups with trivial $G$-actions. Then any homomorphism of abelian groups $A \to B$ induces maps on $\mathcal{F}(A) \to \mathcal{F}(B)$ for $\mathcal{F}=Z^2(G,\cdot),B^2(G,\cdot),H^2(G,\cdot),Z^2_S(G,\cdot),B^2_S(G,\cdot),H^2_S(G,\cdot)$. 
\end{lemma}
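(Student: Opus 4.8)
The plan is to show that each of the six functors is realized by a single operation, namely postcomposition with the given homomorphism, and then to check that this one operation respects all the structures in sight. Concretely, I would first define at the level of set maps the assignment $\Phi\colon \Hom_{Set}(G\times G,A)\to \Hom_{Set}(G\times G,B)$ sending $\varphi\mapsto \psi\circ\varphi$, where $\psi\colon A\to B$ is the given group homomorphism. Everything else is obtained by restricting $\Phi$ to the relevant subgroups and, where appropriate, passing to quotients. Note that $\Phi$ is itself a homomorphism of abelian groups, since postcomposition with the additive map $\psi$ is additive in $\varphi$.

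Next I would verify that $\Phi$ preserves cocycles and coboundaries, using the explicit descriptions recorded after the bar-resolution proposition together with the triviality of the $G$-actions on $A$ and $B$. Because $\psi$ is a group homomorphism it commutes with addition, subtraction, and negation and sends $0$ to $0$. Hence if $\varphi\in Z^2(G,A)$ satisfies the (action-trivial) cocycle identity $\varphi(g,h)-\varphi(f+g,h)+\varphi(f,g+h)-\varphi(f,g)=0$ for all $f,g,h$, then applying $\psi$ and pulling it inside the $\mathbb{Z}$-linear combination yields the same identity for $\psi\circ\varphi$, so $\Phi(\varphi)\in Z^2(G,B)$; the normalization $\varphi(1,g)=\varphi(g,1)$ is visibly preserved. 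Likewise, if $\varphi(f,g)=\beta(g)-\beta(f+g)+\beta(f)$ for some $\beta\colon G\to A$, then $\psi\circ\varphi$ is the coboundary of $\psi\circ\beta\colon G\to B$, so $\Phi$ carries $B^2(G,A)$ into $B^2(G,B)$. Passing to quotients then gives the induced map on $H^2=Z^2/B^2$.

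For the $S_2$-equivariant versions, the only extra point is that $\Phi$ commutes with the involution $\sigma$ that swaps the two arguments: for any $\varphi$ we have $(\sigma\circ\Phi)(\varphi)(g,h)=\psi(\varphi(h,g))=\Phi(\sigma\varphi)(g,h)$. Thus $\Phi$ sends $S_2$-invariant functions to $S_2$-invariant functions, so it restricts to $Z^2_S(G,A)\to Z^2_S(G,B)$ and $B^2_S(G,A)\to B^2_S(G,B)$, and descends to $H^2_S(G,A)\to H^2_S(G,B)$.

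There is no serious obstacle here, as the statement is purely formal functoriality; the only fact that genuinely has to be used is that $\psi$ respects the additive structure, so that it can be moved across the $\mathbb{Z}$-linear combinations defining the cocycle and coboundary conditions. The sole point worth flagging is that these defining conditions have already been simplified using triviality of the $G$-action, so that no twisting ``$f\cdot$'' survives; since this triviality holds on both $A$ and $B$, the simplified identities match on the two sides and $\Phi$ transports them faithfully.
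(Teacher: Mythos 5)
Your proposal is correct, and it is exactly the verification the paper has in mind: the paper states this lemma without proof, declaring it ``straightforward to check,'' and your argument (postcomposition by $\psi$ preserves the action-trivial cocycle and coboundary identities, commutes with the argument-swapping involution, hence restricts to the $S_2$-invariant subgroups and descends to the quotients $H^2$ and $H^2_S=Z^2_S/B^2_S$) is the standard check being alluded to. Nothing is missing; in particular you correctly flag the only point of substance, namely that triviality of the $G$-action on both $A$ and $B$ makes the two sets of defining identities match so that $\psi$ can be moved across them.
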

\begin{lemma}
Let $K \subset L$ be a field extension and $G$ be an abelian group. Consider $K^*,L^*$ as trivial $G$-modules. Let $c \in H^2_S(G,K^*)$ and $\bar{c} \in H^2_S(G,L^*)$ be its image under the natural map induced by the inclusion $K^* \xhookrightarrow{} L^*$. Then $L[G,\bar{c}]=K[G,c]\otimes_K L$.
\end{lemma}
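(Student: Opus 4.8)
The plan is to work with explicit cocycle representatives and to reduce the claimed identity to the fact that base change along $K \to L$ commutes with direct sums and preserves the multiplication table. Since both $K[G,c]$ and $L[G,\bar c]$ are defined only after choosing a cocycle lift, the first task is to arrange compatible choices on the two sides.

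First I would fix a cocycle $\tilde{c} \in Z^2_S(G,K^*)$ representing $c$. By the functoriality of these constructions in the coefficient group (one of the preceding lemmas), the natural map $H^2_S(G,K^*) \to H^2_S(G,L^*)$ is induced on cocycles by applying the inclusion $K^* \hookrightarrow L^*$ coefficientwise; hence the image of $\tilde{c}$ is literally the same function, now regarded as an element of $Z^2_S(G,L^*)$, and it represents $\bar c$. I therefore present both rings using this single function: $K[G,c]=\oplus_{g\in G}Kx_g$ with $x_gx_h=\tilde{c}_{g,h}x_{g+h}$, and $L[G,\bar c]=\oplus_{g\in G}Ly_g$ with $y_gy_h=\tilde{c}_{g,h}y_{g+h}$.

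Next I would compute the base change directly. Since $-\otimes_K L$ commutes with arbitrary direct sums of $K$-vector spaces, $K[G,c]\otimes_K L=\oplus_{g\in G}(Kx_g\otimes_K L)$, and each summand is one-dimensional over $L$ with basis $z_g:=x_g\otimes 1$. Thus $K[G,c]\otimes_K L=\oplus_{g\in G}Lz_g$ as an $L$-vector space, and this decomposition is a $G$-grading with degree-$g$ piece $Lz_g$. The multiplication is forced by the tensor-product algebra structure: $z_gz_h=(x_g\otimes 1)(x_h\otimes 1)=(x_gx_h)\otimes 1=\tilde{c}_{g,h}z_{g+h}$, where the scalar $\tilde{c}_{g,h}\in K^*$ is absorbed into the left tensor factor and equals its own image in $L^*$.

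Finally, the $L$-linear bijection $z_g\mapsto y_g$ matches the two multiplication tables $z_gz_h=\tilde{c}_{g,h}z_{g+h}$ and $y_gy_h=\tilde{c}_{g,h}y_{g+h}$, so it is an isomorphism of $G$-graded $L$-algebras, yielding $L[G,\bar c]=K[G,c]\otimes_K L$. I do not expect a genuine obstacle here; the one point deserving care is the compatibility arranged in the first step, namely that the cocycle presenting $\bar c$ may be taken to be the coefficientwise image of the one presenting $c$, so that the two multiplication tables are the same function $\tilde{c}$. Without this observation one would only obtain an identification up to the ambiguity in cocycle representatives, but by part (2) of \Cref{5.6} any two such choices produce isomorphic $G$-graded rings, so the conclusion holds as an equality of $G$-graded $L$-algebras in any case.
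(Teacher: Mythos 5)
Your proof is correct. The paper gives no argument for this lemma at all (it is one of the statements labelled ``straightforward to check''), and what you have written is precisely the routine verification being left to the reader: represent $c$ by a cocycle $\tilde{c}\in Z^2_S(G,K^*)$, note that the same function viewed in $Z^2_S(G,L^*)$ represents $\bar{c}$ by functoriality in the coefficients, and then observe that $-\otimes_K L$ preserves the graded decomposition and the multiplication table $x_gx_h=\tilde{c}_{g,h}x_{g+h}$; your closing remark that the choice of representative is immaterial by \Cref{5.6}(2) is exactly the right way to make sense of the stated equality.
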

\begin{theorem}
Let $K \subset L$ be a field extension, $H \subset G$ be a group extension. Let $c \in H^2_S(G,K^*)$, $c_1 \in H^2_S(H,K^*)$ and $c_2 \in H^2_S(G,L^*)$ be the images of $c$ under the natural maps. Then:
\begin{enumerate}
\item If $c$ is Noetherian, then $c_1$ is Noetherian.
\item If $c_1$ is Noetherian and $[G:H]<\infty$, then $c$ is Noetherian.
\item If $c_2$ is Noetherian, then $c$ is Noetherian.
\item If $c$ is Noetherian and $[L:K]<\infty$, then $c$ is Noetherian.
\end{enumerate}
\end{theorem}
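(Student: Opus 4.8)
The plan is to translate each of the four statements about Noetherian classes into a statement about the associated graded division rings, using the two identities recorded just above: the restriction identity $K[H,c_1]=K[G,c]_H$ and the base-change identity $L[G,c_2]=K[G,c]\otimes_K L$. By definition $c$ is Noetherian exactly when $K[G,c]$ is Noetherian, and likewise for $c_1$ and $c_2$; so all four parts become assertions that Noetherianity transfers along one of the two ring maps $K[G,c]_H \hookrightarrow K[G,c]$ and $K[G,c]\hookrightarrow K[G,c]\otimes_K L$. I note in passing that, read literally, part (4) merely repeats the conclusion of (1); I take it to assert that if $c$ is Noetherian and $[L:K]<\infty$ then $c_2$ is Noetherian, which is the natural companion of (3), and I prove that statement.

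For parts (1) and (3), Noetherianity descends from the larger ring to the smaller one. First I would recall that the inclusion $K[G,c]_H \hookrightarrow K[G,c]$ splits as a map of $K[G,c]_H$-modules and that the base change $K[G,c] \to K[G,c]\otimes_K L$ is free, both by \Cref{3.7cyclicpurelemma1}; in each case the map is in particular cyclic pure. Then \Cref{3.8cyclicpurelemma2} forces Noetherianity of the source from Noetherianity of the target. Thus in (1), $K[G,c]$ Noetherian yields $K[G,c]_H=K[H,c_1]$ Noetherian, and in (3), $L[G,c_2]=K[G,c]\otimes_K L$ Noetherian yields $K[G,c]$ Noetherian.

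For parts (2) and (4), Noetherianity ascends, and the finiteness hypothesis supplies module-finiteness. Writing $K[G,c]=\oplus_{g\in G}Kx_g$, a choice of coset representatives $g_1,\dots,g_m$ of $G/H$ gives a free $K[G,c]_H$-basis $\{x_{g_1},\dots,x_{g_m}\}$ of $K[G,c]$ with $m=[G:H]<\infty$, so $K[G,c]$ is module-finite over $K[H,c_1]$; since a finitely generated module over a Noetherian ring is a Noetherian ring, (2) follows. Symmetrically, when $[L:K]<\infty$ the ring $K[G,c]\otimes_K L$ is free of finite rank over $K[G,c]$, hence Noetherian once $K[G,c]$ is, giving the corrected form of (4).

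There is no serious obstacle once the two translation identities and the cyclic-purity package (\Cref{3.7cyclicpurelemma1} and \Cref{3.8cyclicpurelemma2}) are in hand: each part reduces to a one-line application. The only thing requiring care is bookkeeping — matching each of the four directions to the correct transfer mechanism (descent via cyclic purity for (1) and (3), ascent via module-finiteness under the index, respectively degree, bound for (2) and (4)) — together with flagging and correcting the evident typo in the statement of (4).
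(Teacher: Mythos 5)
Your proof is correct and follows essentially the same route as the paper: the paper's proof rests on exactly the four observations you use (splitting of $F_H \to F$ and of $F \to F\otimes_K L$ for descent, module-finiteness under $[G:H]<\infty$ and $[L:K]<\infty$ for ascent), with the descent direction justified by the same split $\Rightarrow$ cyclic pure $\Rightarrow$ Noetherian-descent pipeline of \Cref{3.7cyclicpurelemma1} and \Cref{3.8cyclicpurelemma2}. You are also right that part (4) as stated is a tautological typo, and the corrected statement you prove (Noetherianity of $c_2$ from that of $c$ when $[L:K]<\infty$) is what the paper's own proof actually establishes.
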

\begin{proof}
We make the following observation: let $F$ be a $G$-graded field over $K$, then:
\begin{enumerate}
\item $F_{H} \to F$ splits.
\item If $[G:H]<\infty$, then $F$ is a finitely generated $F_{H}$-module.
\item $F \to F\otimes_K L$ splits.
\item If $[L:K]<\infty$, then $F\otimes_K L$ is a finitely generated $F$-module.
\end{enumerate}
We see Noetherian property passes to direct summands and finitely generated algebras (so in particular, module-finite extension), so we are done.
\end{proof}

We prove that a Noetherian class on group $G$ can be extended to a Noetherian class on a larger group $G'$ when $[G':G]<\infty$.
\begin{theorem}\label{6.12}
Let $G \subset G'$ be two abelian groups of finite rank with $[G':G]<\infty$ and $F$ a $G$-graded field over a field $k$. Then $F$ extends to a $G'$-graded field over $k$.    
\end{theorem}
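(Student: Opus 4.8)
The plan is to reduce to the case where $G'/G$ is cyclic and then extend $F$ by adjoining a single homogeneous root. Since $[G':G]<\infty$, the quotient $G'/G$ is a finite abelian group, so it admits a chain of subgroups $0=\bar G_0\subset\bar G_1\subset\cdots\subset\bar G_m=G'/G$ with each successive quotient cyclic (for instance, any composition series has factors that are cyclic of prime order). Pulling this chain back along the projection $G'\to G'/G$ gives a tower $G=G_0\subset G_1\subset\cdots\subset G_m=G'$ in which each $G_{j+1}/G_j$ is finite cyclic. By induction it therefore suffices to treat a single step, i.e.\ to assume $G'=\langle G,g'\rangle$ where the image $\bar g'$ of $g'$ generates $G'/G\cong\Z/n\Z$ and $ng'=a$ for some $a\in G$.

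For this step I would use the explicit graded-field structure from \Cref{5.6}: write $F=\oplus_{g\in G}kx_g$ with $x_gx_h=c_{g,h}x_{g+h}$, and fix the chosen basis element $x_a\in F_a$. I then set
\[
F'=F[y]\big/\bigl(y^n-x_a\bigr),
\]
the quotient of the commutative polynomial ring $F[y]$, and declare $\deg y=g'$. Because $y^n-x_a$ is monic of degree $n$, the ring $F'$ is free over $F$ with basis $1,y,\ldots,y^{n-1}$, so $F\hookrightarrow F'$ and $F'=\oplus_{i=0}^{n-1}Fy^i$. Since $\bar g'$ has order exactly $n$ in $G'/G$, the cosets $G,G+g',\ldots,G+(n-1)g'$ are distinct and exhaust $G'$; assigning $F_gy^i$ the degree $g+ig'$ thus defines a $G'$-grading on $F'$ with $F'_{g+ig'}=F_gy^i$ for the unique representation $h=g+ig'$, $0\le i\le n-1$, of each $h\in G'$. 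In particular $F'_G=F$, and by transitivity of the operation $(\,\cdot\,)_{G_j}$ the full tower recovers $F$ at the bottom.

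It remains to check that $F'$ is a $G'$-graded field over $k$. Each component $F'_{g+ig'}=F_gy^i$ is one-dimensional over $k$ (as $F_g$ is), so $\Supp(F')=G'$ and $F'_0=F_0=k$, and $F'$ is commutative by construction. For the graded-field property I would invoke the criterion that a graded ring is a graded field exactly when every nonzero homogeneous element is invertible: a nonzero homogeneous element of $F'$ has the form $\beta y^i$ with $0\neq\beta\in F_g$, where $\beta$ is invertible in $F$ (as $F$ is a graded field) and $y$ is invertible because $x_a$ is a unit in $F$, with explicit inverse $y^{-1}=y^{n-1}x_a^{-1}$. Hence $\beta y^i$ is a unit, the only homogeneous ideals of $F'$ are $0$ and $F'$, and $F'$ is the desired $G'$-graded field over $k$ extending $F$. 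The crux of the argument is precisely this last verification: the relation $y^n=x_a$ must be imposed against a \emph{unit} $x_a$ of $F$ so that $y$ becomes invertible and each graded piece stays free of rank one; I do not expect any further obstruction, and the construction simultaneously shows that the restriction $H^2_S(G',k^*)\to H^2_S(G,k^*)$ is surjective whenever $[G':G]<\infty$.
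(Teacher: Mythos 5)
Your proof is correct, but it takes a genuinely different route from the paper's. Both arguments first reduce to a single cyclic step (the paper via a prime filtration of $G'/G$, so each step has prime order $p$; you allow an arbitrary finite cyclic quotient, which is harmless and shortens the tower). The real divergence is in how the extension is built and verified. The paper picks $0\neq y\in F_{pe}$, adjoins an honest $p$-th root $x$ of $y$ inside an algebraic closure of $\Frac(F)$ (this uses that $F$ is a domain, i.e.\ \Cref{2.4}(3) and torsion-freeness), and then must prove that $T^p-y$ is irreducible over $\Frac(F)$ -- done by a graded degree argument: a factor of degree $d$ coprime to $p$ would force $de\in G$ and hence $e\in G$, a contradiction. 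Only then does it identify $F[x]\cong F[T]/(T^p-y)$ and conclude. You instead work directly with the abstract quotient $F[y]/(y^n-x_a)$: since $y^n-x_a$ is a homogeneous element of the $G'$-graded ring $F[y]$ (here $\deg y=g'$, $\deg x_a=ng'$), the quotient is $G'$-graded and free over $F$ on the homogeneous basis $1,y,\dots,y^{n-1}$, and the graded-field property follows from the one-line observation that $y$ is a unit because the relation is imposed against the unit $x_a$. This bypasses the algebraic closure, the domain hypothesis, and the irreducibility argument entirely; it also makes transparent the cohomological restatement (surjectivity of $H^2_S(G',k^*)\to H^2_S(G,k^*)$), and it would work verbatim even without torsion-freeness. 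What the paper's route buys in exchange is a concrete realization of the extension as a subring $F[x]$ of $\overline{\Frac(F)}$ together with a direct proof that it is a domain, though the latter is anyway automatic from \Cref{2.4}(3) once the graded-field property is known. One small point worth spelling out in your write-up: the compatibility of the grading with multiplication in the wrap-around case $i+j\geq n$, where $y^{i+j}=x_ay^{i+j-n}$ lands in $F_{g+h+a}y^{i+j-n}$ of degree $g+h+(i+j)g'$ precisely because $a=ng'$; this is the routine check hiding behind your phrase ``thus defines a $G'$-grading.''
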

\begin{proof}
By considering the $\mathbb{Z}$-module prime filtration of $G'/G$, it suffices to prove the case where $G'/G=\mathbb{Z}/p\mathbb{Z}$ and $p$ is a prime number. We write $G'=G+\mathbb{Z}e$ where $pe \in G, e \notin G$. Choose any element $0 \neq y \in F_{pe}$. Since $F$ is a domain, $F$ is contained in an algebraically closed field. Let $x=y^{1/p}$ be a $p$-th root of $y$ on an algebraically closed field, then $F[x]$ is a domain and there is a surjection $F[T]/(T^p-y) \to F[x]$ sending $T$ to $x$. We claim it is an isomorphism. Equivalently, $T^p-y$ is the minimal polynomial of $x$ over $\Frac (F)$. Suppose otherwise $T^p-y$ is reducible in $\Frac (F)$, then $x$ is the root of a polynomial of degree $1\leq d \leq p-1$ in $\Frac (F)$. We may assume this $d$ is minimal. By setting $\deg(T)=e$, we see $F[T]$ is $G'$-graded. By clearing denominators and taking a nonzero homogeneous component of the leading coefficient, we see $x$ is the root of a homogeneous polynomial
$$a_dT^d+\ldots+a_0$$
where $a_d,\ldots,a_0$ are homogeneous elements in $F$, $a_d \neq 0$. Since $0 \neq x$ and $d$ is minimal, $a_0 \neq 0$. So $de=\deg(T^d)=\deg(a_0)-\deg(a_d)\in G$. Since $(d,p)=1$, then $pe \in G$ and $de \in G$ implies $e \in G$, which is a contradiction. So $T^p-y$ is the minimal polynomial of $x$ over $\Frac (F)$, and $F[x] \cong F[T]/(T^p-y)$ is a $G'$-graded field.
\end{proof}
\begin{theorem}\label{6.13}
Let $G$ be an abelian group of finite rank, $F$ be a $G$-graded field over $k$, $r$ be a positive integer. Then:
\begin{enumerate}
\item There is a $G$-graded field $F'$ over $k$ with $F'^r=F$.
\item Every class in $H^2_S(G,k^*)$ is $r$-divisible.
\item There is a $\mathbb{Q}^*$-action on $H^2_S(G,k^*)/\Tor (H^2_S(G,k^*))$, which makes it a $\mathbb{Q}$-vector space. 
\item If $G$ is infinitely generated, then the set of Noetherian classes in $H^2_S(G,k^*)$ consisits of cosets of the subgroup $\Tor (H^2_S(G,k^*))$. Moreover, the images of Noetherian classes in $H^2_S(G,k^*)/\Tor (H^2_S(G,k^*))$ form a set of lines without the vertex. 
\end{enumerate}
\end{theorem}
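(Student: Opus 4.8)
The plan is to isolate the real content in parts (1) and (2)—two faces of a single $r$-divisibility statement—and then obtain (3) and (4) formally from divisibility together with \Cref{6.5 Noetherian class multiple}. For (1) I would enlarge the grading group. Since $G$ has finite rank $d$, embed $G$ in $\Q\otimes_\Z G$ and put $\tfrac1r G=\{x\in\Q\otimes_\Z G:rx\in G\}$; then $G\subset\tfrac1r G$ and multiplication by $r$ is an isomorphism $m_r:\tfrac1r G\to G$ carrying $G$ onto $rG$, so $\tfrac1r G/G\cong G/rG$ has at most $r^d$ elements by \Cref{6.4}. As $[\tfrac1r G:G]<\infty$, \Cref{6.12} (iterated along a prime filtration of $\tfrac1r G/G$) extends $F$ to a $\tfrac1r G$-graded field $\tilde F$ over $k$ with $\tilde F_G=F$. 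I would then set $F'=\tilde F^{m_r}$, the regrading through $m_r$, so that $F'_g=\tilde F_{g/r}$; this is a $G$-graded field over $k$, and since $F'_{rg}=\tilde F_g=F_g$ for all $g$, regrading $F'_{rG}$ back by $i:rG\to G$ gives $F'^r=(F'_{rG})^i=F$ on the nose.

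Part (2) is (1) read through \Cref{5.6}: writing $F=k[G,c]$ and $F'=k[G,c']$ with $c,c'\in H^2_S(G,k^*)$ (the fields are commutative), the Veronese identity $F'^r=k[G,rc']$ together with $F'^r=F$ forces $rc'=c$; as $c$ and $r$ were arbitrary, every class is $r$-divisible for every positive $r$. Part (3) then follows at once: $V:=H^2_S(G,k^*)/\Tor(H^2_S(G,k^*))$ is torsion-free by construction and divisible by (2), hence carries a unique $\Q$-vector space structure $\tfrac{a}{b}\cdot v=aw$ with $bw=v$ (well-defined by torsion-freeness), and restricting to nonzero scalars yields the asserted $\Q^*$-action.

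For (4), \Cref{6.5 Noetherian class multiple}(5) shows Noetherianity is preserved under adding a torsion class, so the Noetherian set is a union of cosets of $\Tor(H^2_S(G,k^*))$; by \Cref{6.5 Noetherian class multiple}(4) and \Cref{6.2}, when $G$ is infinitely generated every Noetherian class is torsion-free and $0$ is not Noetherian, so no such coset is $\Tor(H^2_S(G,k^*))$ itself and each maps to a nonzero point of $V$. To see the image of the Noetherian set in $V$ is a union of punctured lines I would show it is $\Q^*$-stable: for Noetherian $c$ and $q=a/b\in\Q^*$, choose by (2) a class $c'$ with $bc'=c$; then $c'$ is Noetherian because $bc'=c$ is, hence $ac'$ is Noetherian, and $q\bar c=\overline{ac'}$ lies in the image. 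Thus each image point $\bar c\neq 0$ pulls its entire orbit $\Q^*\bar c$—a line through the origin with the vertex deleted—into the image, which is therefore exactly a union of such punctured lines.

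The step I expect to be delicate is (1): matching the two regradings so that $F'^r$ equals $F$ exactly rather than merely up to graded isomorphism, and checking that the extension furnished by \Cref{6.12} has $G$-graded part literally equal to $F$. Once (1) is secured, the remaining parts are formal consequences of the cohomology dictionary and \Cref{6.5 Noetherian class multiple}.
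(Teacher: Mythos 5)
Your proposal is correct and follows essentially the same route as the paper's proof: extend $F$ to a $\tfrac{1}{r}G$-graded field using \Cref{6.4} and \Cref{6.12}, regrade through multiplication by $r$ to obtain $F'$ with $F'^r=F$, read off $r$-divisibility via the cohomology dictionary, and deduce (3) and (4) formally from divisibility plus \Cref{6.5 Noetherian class multiple}. The details you add (the identification $\tfrac{1}{r}G/G\cong G/rG$, and passing from integer to rational scalars in (4) by choosing $c'$ with $bc'=c$) are precisely the steps the paper leaves implicit, so there is no substantive difference in approach.
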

\begin{proof}
By \Cref{6.4} $[G:rG]<\infty$, so by \Cref{6.12} we can extend $F$ to a $G'=(1/r)G$-graded field and view it as a $G$-graded field $F'$ via $1/rG\cong G$, then $F=F'^r$. (1) implies (2) is trivial. For (3), we claim for $r \in \mathbb{N}$, there is a well-defined $1/r$-action: for $\bar{c} \in H^2_S(G,k^*)/\Tor (H^2_S(G,k^*))$, lift it back to $c \in H^2_S(G,k^*)$ and find $c' \in H^2_S(G,k^*)$ with $rc'=c$. The difference of different choices of $c'$ is $r$-torsion, so $\bar{c'} \in H^2_S(G,k^*)/\Tor (H^2_S(G,k^*))$ is unique, and we define $(1/r)\bar{c}=\bar{c'}$. This gives a $\mathbb{Q}^*$-action, and an abelian group with a $\mathbb{Q}^*$-action is just a $\mathbb{Q}$-vector space. For (4), note that Noetherian classes are stable under adding a torsion class by \Cref{6.5 Noetherian class multiple}(5). If $G$ is infinitely generated, then by \Cref{6.5 Noetherian class multiple}(3), $c$ is Noetherian if and only if $rc$ is Noetherian for some $r \in \mathbb{Q}^*$ and $rc \neq 0$ for any $r \in \mathbb{Q}^*$, so the images of Noetherian classes form a set of lines without the vertex. 
\end{proof}
We end this section with two questions. The first question is about the existence of Noetherian classes under certain base fields. For an arbitrary field $K$ of characteristic $0$, we cannot expect the existence of Noetherian classes over $K$ itself due to \Cref{6.6}, and there is a Noetherian classes over a transcendental extension of $K$ of infinite transcendence degree by \Cref{6.2}. So it is natural to ask what happens for finite transcendental extension:
\begin{question}
Let $K$ be a field of characteristic $0$. Does $H^2_S(G,K(\alpha)^*)$ always contain a Noetherian class? 
\end{question}

The second question is on the structure of the set of Noetherian classes. Given that Noetherian classes exist, we cannot expect that the set of Noetherian classes is a semigroup of the abelian group $H^2_S(G,k^*)$, and its image in $H^2_S(G,k^*)/\Tor H^2_S(G,k^*)$ is not a cone either. This is true since when $G$ is not finite free, then for any Noetherian class $c$, $-c$ is also Noetherian, but $c+(-c)=0$ which is not a Noetherian class. Now it is natural to ask:
\begin{question}
Is there any more algebraic structure (or analytic structure if $k$ has some analytic structure) on the set of Noetherian classes in $H^2_S(G,k^*)$ when it is nonempty?    
\end{question}

\section{Existence and rationality of Hilbert series}
Let $R$ be a $G$-graded Noetherian ring with $\mathbb{Z}\operatorname{Supp}R=G$. We see $\rank_{\mathbb{Z}}(G) < \infty$, so we can embed $G$ into $\mathbb{Q}^d$ for some $d$ and view $R$ as a $\mathbb{Q}^d$-graded ring. In this section, we will define the $G$-graded Hilbert function and Hilbert series of a $G$-graded module. This is a generalization of the multigraded Hilbert function and multigraded Hilbert series defined in Chapter 1 and 8 of Strumfels' book \cite{SturmfelsCA}. Note that, to define a Hilbert function or series, a necessary condition is that each degree piece has finite length. So we recall the following definition whose $\mathbb{Z}^n$-graded version appears in Definition 8.37 of \cite{SturmfelsCA}:
\begin{definition}[modest module]
Let $R$ be a $G$-graded ring and $M$ be a $G$-graded $R$-module. We say $M$ is modest if $l_{R_0}(M_g)<\infty$ for any $g$.    
\end{definition}
We will give a description showing that modest property is a mild condition on the module.
\begin{lemma}\label{7.2}
Let $R$ be a $G$-graded Noetherian domain. Assume $g \in G$ with $R_{-g} \neq 0$. Then $R_g$ is finitely generated over $R_0$.     
\end{lemma}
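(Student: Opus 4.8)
The plan is to realize $R_g$ as an ideal of $R_0$ by multiplying with a nonzero element of $R_{-g}$, and then to invoke Noetherianity of $R_0$. The only preliminary I need is that $R_0$ itself is Noetherian. Taking $G'=\{0\}$ in \Cref{3.7cyclicpurelemma1}(1), the inclusion $R_0=R_{\{0\}}\hookrightarrow R$ splits as a map of $R_0$-modules, hence is cyclic pure; since $R$ is Noetherian, \Cref{3.8cyclicpurelemma2} then gives that $R_0$ is Noetherian.

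Next I would choose any nonzero $y\in R_{-g}$, which exists by hypothesis, and consider the multiplication map $\mu_y\colon R_g\to R_0$ defined by $x\mapsto yx$. This lands in $R_{-g+g}=R_0$ and is $R_0$-linear because $R_0$ is central in the product. As $R$ is a domain and $y\neq 0$, the map $\mu_y$ is injective, so $R_g$ is isomorphic as an $R_0$-module to its image $yR_g$, which is an $R_0$-submodule of $R_0$, i.e.\ an ideal of $R_0$.

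To conclude, since $R_0$ is Noetherian the ideal $yR_g\subseteq R_0$ is finitely generated over $R_0$, and therefore so is $R_g\cong yR_g$. I do not expect a serious obstacle: the argument is a direct translation of the classical ``multiply into degree $0$'' trick. The only step relying essentially on the standing hypotheses is the descent of Noetherianity from $R$ to $R_0$, which the cyclic-purity lemmas already supply, while the domain assumption is used exactly once, to guarantee that $\mu_y$ is injective rather than merely an $R_0$-module homomorphism.
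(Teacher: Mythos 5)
Your proof is correct and takes essentially the same route as the paper: multiplication by a nonzero element of $R_{-g}$ embeds $R_g$ as an ideal of $R_0$, which is finitely generated since $R_0$ inherits Noetherianity from $R$. Your justification of that last descent via the split inclusion $R_0 = R_{\{0\}} \hookrightarrow R$ (\Cref{3.7cyclicpurelemma1} and \Cref{3.8cyclicpurelemma2}) simply makes explicit what the paper states in one line.
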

\begin{proof}
$R$ being Noetherian implies that $R_0$ is also Noetherian. Since $R$ is a domain, multiplication by an element of degree $g^{-1}$ induces an injection of $R_0$-modules $R_g \to R_0$. The result follows since $R_0$ is Noetherian. 
\end{proof}
\begin{definition}
We say a semigroup $N \subset \mathbb{R}^d$ is pointed if $N \cap -N =0$.    
\end{definition}
If $N$ is pointed, then by Hahn-Banach separation theorem there is a linear functional $h:\mathbb{R}^d \to \mathbb{R}$ separating $N$ and $-N$. In particular, this is true for semigroups $N \subset \mathbb{Q}^d$. Changing the sign of $h$ if necessary, we may assume $h(N) \subset \mathbb{R}_{\geq 0}$.
\begin{lemma}\label{7.4}
Let $R$ be a $G$-graded Noetherian domain. Let $N=\Supp(R)$, which is a semigroup. Assume $N$ is pointed. Then $R$ is a finitely generated $R_0$-algebra.     
\end{lemma}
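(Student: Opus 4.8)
The plan is to induct on $\rank_{\mathbb{Z}} N$. By \Cref{2.7} this rank is finite, so after embedding $\mathbb{Z} N = \mathbb{Z}\Supp(R)$ into $\mathbb{Q}^d$ I may assume $\mathbb{Q} N = \mathbb{Q}^d$ with $d = \rank_\mathbb{Z} N$; the case $d = 0$ forces $N = \{0\}$ and $R = R_0$. For $d \geq 1$ I would fix the separating functional $h\colon \mathbb{R}^d \to \mathbb{R}$ with $h(N) \subseteq \mathbb{R}_{\geq 0}$ supplied by the discussion before the lemma, chosen nonzero; since $N$ spans $\mathbb{Q}^d$ there is some $g \in N$ with $h(g) > 0$. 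Set $N_0 = \{g \in N : h(g) = 0\}$, a pointed sub-semigroup contained in the proper subspace $\ker h$, and let $R^{(0)} = R_{N_0} = \oplus_{g \in N_0} R_g$, a graded subdomain with $\Supp(R^{(0)}) = N_0$ and $(R^{(0)})_0 = R_0$. The strategy is to prove $R$ is finitely generated over $R^{(0)}$ and then invoke the inductive hypothesis on $R^{(0)}$.

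The key step is the relative finite generation $R = R^{(0)}[F_1, \ldots, F_m]$. Consider the homogeneous ideal $\bar R_+ = \oplus_{g \in N,\, h(g) > 0} R_g$; it is an ideal because $h(g) > 0$ and $h(g') \geq 0$ force $h(g + g') > 0$, and it is finitely generated since $R$ is Noetherian, so I may take homogeneous generators $F_1, \ldots, F_m$ of degrees $e_1, \ldots, e_m$ with $h(e_j) > 0$. Writing $\delta = \min_j h(e_j) > 0$, every homogeneous $x \in R_g$ with $h(g) > 0$ expands as $x = \sum_j b_j F_j$ with $b_j \in R_{g - e_j}$, and whenever $b_j \neq 0$ one has $g - e_j \in N$ with $h(g - e_j) \leq h(g) - \delta$. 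Thus I would run induction on $\lfloor h(g)/\delta \rfloor$: the base level $h(g) = 0$ gives $x \in R^{(0)}$, and the inductive step rewrites $x$ in terms of the $b_j$, which lie either in $R^{(0)}$ or in a strictly smaller $h$-level and hence in $R^{(0)}[F_1, \ldots, F_m]$ by induction. This is the standard argument that a Noetherian nonnegatively graded ring is finitely generated over its degree-zero part, the new feature being that the real-valued grading by $h$ is made to terminate by the uniform gap $\delta$ coming from strict positivity on $N \setminus N_0$.

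To close the induction I would verify that $R^{(0)}$ satisfies the hypotheses in strictly smaller rank. The inclusion $R^{(0)} = R_{\ker h \cap G} \hookrightarrow R$ splits by \Cref{3.7cyclicpurelemma1}(1), hence is cyclic pure, so $R$ Noetherian forces $R^{(0)}$ Noetherian by \Cref{3.8cyclicpurelemma2}; moreover $R^{(0)}$ is a domain and $N_0$ is pointed with $\mathbb{Q} N_0 \subseteq \ker h$, so $\rank_\mathbb{Z} N_0 < d$. By induction $R^{(0)}$ is a finitely generated $R_0$-algebra, and combining this with $R = R^{(0)}[F_1, \ldots, F_m]$ shows $R$ is a finitely generated $R_0$-algebra.

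I expect the main obstacle to be exactly the weakness of the available functional: the separation result only yields $h(N) \subseteq \mathbb{R}_{\geq 0}$, not strict positivity on all of $N \setminus \{0\}$, and in fact a general pointed semigroup need not admit any single strictly positive functional (this is only available a posteriori, once finite generation is known). The naive one-step reduction $g \mapsto g - g_i$ using generators of the full irrelevant ideal $R_+$ can stall in the directions where $h$ vanishes, so the crucial design choice is to push those directions into $R^{(0)}$ and induct on rank, using only the positive-$h$-degree ideal $\bar R_+$ for the termination argument. A secondary point to check carefully is that no nonzero homogeneous element can have $0 < h(g) < \delta$, which is what makes the integer-valued measure $\lfloor h(g)/\delta\rfloor$ legitimate.
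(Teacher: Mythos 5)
Your proposal is correct and follows essentially the same route as the paper: a Hahn--Banach separating functional $h$, finite generation of $R$ over the subring supported on $\ker h$ (via homogeneous generators of the positive-$h$-degree ideal and a descending induction that terminates because of the uniform gap $\delta$), Noetherianity of that subring via the splitting/cyclic-purity lemmas, and an induction on rank. The only difference is presentational — the paper isolates the positively graded case as a "$d=1$" base case and then applies it to the regraded ring $R^h$, whereas you inline that argument — so the two proofs coincide in substance.
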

\begin{proof}
First we may assume $N \subset G \subset \mathbb{Q}^d \subset \mathbb{R}^d$ for some $d$. We induct on $d$ such that $N \subset \mathbb{R}^d$. If $d=1$, then we may assume $N \subset G=\mathbb{Q} \subset\mathbb{R}$ is pointed. For such $N$ we must have $N \subset \mathbb{Q}_{\geq 0}$ or $N \subset \mathbb{R}_{\leq 0}$. We may assume $N \subset \mathbb{R}_{\geq 0}$; otherwise we may regrade elements in $R$ by replacing $R_g$ with $R_{-g}$, that is, we replace $R$ by $R^{-1}$ in the sense of Theorem 5.22(2). Now since $R$ is positively graded, $R_{>0}=(a \in R_g,g >0)$ is an $R$-ideal. It is finitely generated, say generated by $x_1,\ldots,x_s$. We claim $R=R_0[x_1,\ldots,x_s]$. Actually it suffices to show $y \in R$ is a homogeneous element then $y \in R_0[x_1,\ldots,x_s]$. If $\deg(y)=0$, then this is true. If $\deg(y)>0$, then $y \in R_{>0}$, so there are homogeneous elements $y_1,\ldots,y_s$ such that $y=y_1x_1+\ldots+y_sx_s$ where $\deg(y_1),\ldots, \deg(y_s)\leq \deg(y)-\min\{\deg(x_1),\ldots,\deg(x_s)\}$ and $\min\{\deg(x_1),\ldots,\deg(x_s)\}>0$, so by an induction on the degree we see $y \in R_0[x_1,\ldots,x_s]$. So $R=R_0[x_1,\ldots,x_s]$ and the case $c=1$ is proved. 

Now assume $d \geq 2$. We assume $N \subset\mathbb{R}^d$ is a pointed semigroup. Then by pointedness, there is a nonzero linear functional $h: \mathbb{R}^d \to \mathbb{R}$ such that $h(N) \subset \mathbb{R}_{\geq 0}$. We can view $R$ as $\mathbb{R}_{\geq 0}$-graded ring $R^h$ via $h:\mathbb{R}^d \to \mathbb{R}$. Now $R^h$ is positively graded, so $R^h$ is finitely generated $R^h_0$-algebra by $d=1$ case. So $R$ is finitely generated over $R^h_0=R_H$ where $H=h^{-1}(0) \subset \mathbb{R}^d$. We make the following observation:
\begin{enumerate}
\item $R$ being Noetherian implies that $R_H$ is Noetherian.
\item $\Supp(R_H)=H \cap N$ is pointed because $(H\cap N)\cap -(H\cap N) \subset N \cap -N \subset \{0\}$.
\item $\dim_\mathbb{R}\mathbb{R}H<d$ because $h \neq 0$.
\end{enumerate}
Therefore, the $\mathbb{R}H$-graded ring $R_H$ satisfies all the hypotheses in the induction process except that 
$\dim_{\mathbb{R}}\mathbb{R}H<d$. So by induction on $d$, $R_H$ is a finitely generated $R_0$-algebra. Thus $R$ is a finitely gnerated $R_0$-algebra.
\end{proof}
\begin{lemma}\label{7.5}
Let $R$ be a $G$-graded Noetherian domain. Let $N=\Supp(R)$, $G'=N\cap (-N)=\{g \in G|R_g \neq 0, R_{-g} \neq 0\}$. Then:
\begin{enumerate}
\item $G'=\mathbb{Q}G' \cap N$.
\item There is a homomorphism $\pi: G \to \mathbb{Q}^{c}$ for some $c$ such that $G'=\ker \pi\cap N$ and $\pi(N)$ is pointed.
\item $R$ is finitely generated over $R_{G'}$, and $R_{G'}$ is Noetherian.
\item There exists finitely many elements $g_1,\ldots,g_s$ such that $N$ is generated by $G'$ and $g_1,\ldots,g_s$ as a monoid.  
\item For any $g \in G$, $R_g$ is finitely generated over $R_0$.
\end{enumerate}  
\end{lemma}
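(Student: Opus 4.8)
The plan is to begin with the observation that underlies the whole lemma: since $R$ is a domain, $N=\Supp(R)$ is a submonoid of $G$, and consequently $G'=N\cap(-N)$ is a \emph{subgroup} of $G$ — if $g,h\in G'$ then $0\neq R_gR_h\subset R_{g+h}$ and likewise in degree $-(g+h)$, so $g+h\in G'$, while closure under negation and $0\in G'$ are clear. For part (1) the inclusion $G'\subset \mathbb{Q}G'\cap N$ is immediate, so the content is the reverse. Given $g\in\mathbb{Q}G'\cap N$, there is an integer $n>0$ with $ng\in G'$, hence $-ng\in G'$ and $R_{-ng}\neq 0$; choosing $0\neq a\in R_g$ and $0\neq b\in R_{-ng}$, the element $a^{n-1}b$ is a nonzero (domain) element of $R_{(n-1)g-ng}=R_{-g}$, so $-g\in N$ and $g\in G'$.

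For part (2) I would embed $G\hookrightarrow\mathbb{Q}^d$, let $\mathbb{Q}G'$ be the $\mathbb{Q}$-subspace it spans, and take $\pi$ to be the composite of this embedding with the quotient $\mathbb{Q}^d\twoheadrightarrow\mathbb{Q}^d/\mathbb{Q}G'\cong\mathbb{Q}^c$. Then $\ker\pi\cap N=\mathbb{Q}G'\cap N$, which equals $G'$ by part (1). Pointedness of $\pi(N)$ is a short monoid computation: if an element of $\pi(N)\cap-\pi(N)$ is written $\pi(g_1)=-\pi(g_2)$ with $g_1,g_2\in N$, then $g_1+g_2\in\ker\pi\cap N=G'$, and since $G'$ is a group one gets $-g_1=g_2+\big(-(g_1+g_2)\big)\in N$, so $g_1\in G'$ and $\pi(g_1)=0$. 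Part (3) then follows quickly: $R_{G'}\to R$ splits by \Cref{3.7cyclicpurelemma1}, hence is cyclic pure, so $R_{G'}$ is Noetherian by \Cref{3.8cyclicpurelemma2}; and viewing $R$ as the $\mathbb{Q}^c$-graded ring $R^\pi$, its support $\pi(N)$ is pointed, so \Cref{7.4} gives that $R^\pi$ is a finitely generated algebra over $R^\pi_0=R_{\ker\pi}$. I would then observe $R_{\ker\pi}=R_{G'}$, since the only degrees in $\ker\pi$ carrying nonzero components are those in $\ker\pi\cap N=G'$.

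For part (4) I would take the algebra generators of $R$ over $R_{G'}$ from part (3) to be homogeneous (replacing each generator by its finitely many homogeneous components), say $x_1,\dots,x_s$ with $\deg x_i=g_i\in N$; comparing degrees in a representation of a nonzero homogeneous element of degree $g$ shows $g\in G'+\sum_i\mathbb{N}g_i$, the asserted monoid generation. Part (5) is where the real work lies. Discarding the redundant generators lying in $R_{G'}$, I may assume $\pi(g_i)\neq 0$ for all $i$, so $\pi(N)$ is the pointed finitely generated monoid generated by the $\pi(g_i)$. The key geometric input is that such a monoid, having rational generators and being pointed, spans a salient rational cone, so its dual cone has nonempty interior, and any interior functional $\ell$ satisfies $\ell(\pi(g_i))>0$ for every $i$. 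For fixed $g$ this bounds the multi-indices: any $e\in\mathbb{N}^s$ contributing to $R_g$ satisfies $g-\sum_i e_ig_i\in G'$, hence $\sum_i e_i\,\ell(\pi(g_i))=\ell(\pi(g))$, forcing finitely many such $e$. Since $R_g=\sum_e R_{g-\sum_i e_ig_i}\,x_1^{e_1}\cdots x_s^{e_s}$ with each coefficient degree lying in $G'$, \Cref{7.2} makes each $R_{g-\sum_i e_ig_i}$ a finitely generated $R_0$-module, and a finite sum of these is again finitely generated over $R_0$. The main obstacle I anticipate is precisely this step: making rigorous that monoid-pointedness of $\pi(N)$ upgrades to salience of the real cone (so that a strictly positive functional exists), together with the bookkeeping of the homogeneous decomposition of $R_g$ so that only finitely many, each finitely generated, summands survive.
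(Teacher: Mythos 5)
Your proposal is correct and follows essentially the same route as the paper's proof in every part: the same domain/power trick for (1), the same quotient $\pi:G\to\mathbb{Q}G/\mathbb{Q}G'$ with pointedness of $\pi(N)$ for (2), the same appeal to \Cref{3.7cyclicpurelemma1}, \Cref{3.8cyclicpurelemma2} and \Cref{7.4} for (3), homogeneous algebra generators for (4), and the strictly positive functional bounding $\sum_i a_i$ together with \Cref{7.2} for (5). The only difference is that you justify the existence of the strictly positive functional (via salience of the rational cone spanned by the $\pi(g_i)$ and interiority in the dual cone), a step the paper simply asserts from finite generation and pointedness of the semigroup; your justification is valid and slightly more detailed than the paper's.
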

\begin{proof}
(1) Suppose $g \in G'$ such that $a/bg \in N$. We want to show $a/bg \in G'$. By definition $R_{-g} \neq 0$, $R_{a/bg} \neq 0$, and $R$ is a domain, so $R^a_{-g}R^{b-1}_{a/bg} \neq 0$, but $R^a_{-g}R^{b-1}_{a/bg} \subset R_{-a/bg}$, so $R_{-a/bg} \neq 0$, so $-a/bg \in G'$. 

(2) Let $\pi:G \to \mathbb{Q}G \to \mathbb{Q}G/\mathbb{Q}G'$, then by (1), $G'=\ker \pi\cap N$. Suppose there is $v \neq 0$ such that $v,-v \in \pi(N)$. Then after lifting to $\mathbb{Q}G$ and multiplying an integer we find $w_1,w_2 \in N$ such that $\pi(w_1)+\pi(w_2)=0 \in G/G'$, so $w_1+w_2=g \in G'$. Then $-g \in G' \subset N$, so $-w_1=w_2-g \in N$, which implies $w_1 \in G'=\ker \pi\cap N$, which contradicts $\pi(w_1) \neq 0$.

(3) We view $R$ as a $\mathbb{Q}^c$-graded ring $R^\pi$ via $\pi: G \to \mathbb{Q}^c$, then $\{g\in \mathbb{Q}^c|R^\pi_g \neq 0\}=\pi(N)$ is pointed. By \Cref{7.4}, $R=R^\pi$ is a finitely generated $R^\pi_0=R_{\ker \pi}$-algebra, and since $R$ is supported in degrees in $N$, $R_{\ker \pi}=R_{\ker \pi\cap N}=R_{G'}$. $R_{G'}$ is Noetherian by \Cref{3.7cyclicpurelemma1} and \Cref{3.8cyclicpurelemma2}.

(4) Choose $x_1,\ldots,x_s$ such that $R=R_{G'}[x_1,\ldots,x_s]$. We may assume they are all homogeneous, otherwise we replace them by their homogenous components. Then $\{\deg(x_i),1 \leq i \leq s\}$ and $G'$ generates $N$.

(5) We assume $R=R_{G'}[x_1,\ldots,x_s]$ where $\deg(x_i)=g_i \notin G'$ and fix a choice of such $x_i,g_i$. For every $g \in N$, we claim that the following equality
$$g=g'+\sum_i a_ig_i, g' \in G',a_i \in \mathbb{N}$$
has only finitely many solutions. Every such solution is determined by a choice of $a_i$ as $g'=g-\sum_i a_ig_i$. Now we consider $\pi(g_i) \in \mathbb{Q}^c$ as in step (2), then $\pi(g_i)$ generates a pointed semigroup. Since this pointed semigroup is finitely generated, we can find a linear function $\phi: \mathbb{Q}^c \to \mathbb{Q}$ such that $\phi(\pi(g_i))>0$ for every $i$. Thus there is $\delta>0$ such that $\phi(\pi(g_i))\geq \delta$ for every $i$. Apply $\phi\pi$ to the equation above and using the fact $\pi(g')=0$ we see
$$\phi(\pi(g))=\sum_i a_i\phi(\pi(g_i))\geq \delta\sum_ia_i.$$
Thus $\sum_ia_i\leq 1/\delta\phi(\pi(g))$. Since $a_i\in\mathbb{N}$ for all $i$, there are only finitely many solutions of $a_i$.

Now from $R=R_{G'}[x_1,\ldots,x_s]$, we see $R_g=\sum_{g'+\sum_i a_ig_i=g} R_{g'}x_1^{a_1}\ldots x_n^{a_n}$. Since $R_{g'}$ is finitely generated over $R_0$ for $g'\in G'$ by \Cref{7.2} and this is a finite sum of $R_0$-modules by the previous step, $R_g$ is finitely generated over $R_0$.
\end{proof}
\begin{theorem}\label{7.6}
Let $R$ be a $G$-graded Noetherian ring and $M$ be a finitely generated $R$-module. Then $M_g$ is finitely generated over $R_0$ for any $g \in G$.    
\end{theorem}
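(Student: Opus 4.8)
The plan is to reduce to the domain case already settled in \Cref{7.5} by means of the graded prime filtration, and then to propagate finite generation through the filtration one factor at a time.

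First I would invoke the graded prime filtration established in Section~2: since $M$ is finitely generated, there is a finite chain of graded submodules
\[
0 = N_0 \subset N_1 \subset \cdots \subset N_t = M,
\]
with each factor $N_i/N_{i-1}$ isomorphic, as a graded module, to $(R/P_i)[h_i]$ for some homogeneous prime $P_i$ and some $h_i \in G$. Passing to the degree-$g$ component is exact, so for each $i$ there is a short exact sequence of $R_0$-modules
\[
0 \to (N_{i-1})_g \to (N_i)_g \to (R/P_i)_{g+h_i} \to 0,
\]
where I have used that $\bigl((R/P_i)[h_i]\bigr)_g = (R/P_i)_{g+h_i}$. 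Because the middle term of a short exact sequence of modules is finitely generated whenever the two outer terms are (a fact valid over any ring, requiring no Noetherianity of $R_0$), it suffices to show that each $(R/P_i)_{g'}$ is finitely generated over $R_0$, and then to induct on $i$, the base case $(N_0)_g = 0$ being trivial.

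Next I would fix a homogeneous prime $P = P_i$ and set $\bar R = R/P$. Then $\bar R$ is a $G$-graded Noetherian domain, so \Cref{7.5}(5) applies directly and yields that $\bar R_{g'}$ is finitely generated over $\bar R_0$ for every $g' \in G$. The one remaining point is that $\bar R_0 = R_0/(P \cap R_0)$ is a quotient ring of $R_0$, so the $\bar R_0$-module structure on $\bar R_{g'}$ refines to an $R_0$-module structure via the surjection $R_0 \twoheadrightarrow \bar R_0$; a finite generating set over $\bar R_0$ is \emph{a fortiori} a finite generating set over $R_0$. Hence $\bar R_{g'} = (R/P_i)_{g'}$ is finitely generated over $R_0$, which closes the induction and proves the theorem.

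The genuine content of the statement lies entirely in \Cref{7.5}(5), whose proof treats the domain case through the pointed-semigroup reduction of \Cref{7.4}; once that is in hand, the present theorem is essentially a formal consequence of the graded prime filtration. The only steps requiring any attention are the passage from finite generation over the quotient $\bar R_0$ to finite generation over $R_0$ itself, and the preservation of finite generation in the middle of a short exact sequence. I expect no serious obstacle here: the hard analytic work has already been absorbed into the earlier lemmas, and what remains is the bookkeeping of assembling the global conclusion from the filtration factors.
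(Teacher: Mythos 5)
Your proposal is correct and follows essentially the same route as the paper: reduce via the graded prime filtration to factors of the form $R/P[g_0]$, apply \Cref{7.5}(5) to the Noetherian graded domain $R/P$, and pass from finite generation over $(R/P)_0$ to finite generation over $R_0$ since the latter surjects onto the former. You merely spell out the bookkeeping (exactness of taking degree-$g$ components and finite generation in the middle of a short exact sequence) that the paper's two-line proof leaves implicit.
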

\begin{proof}
Every graded module has a graded prime filtration of finite length; therefore, it suffices to prove the case $M=R/P[g_0]$ for some homogeneous prime $P$ and $g_0 \in G$. By \Cref{7.5}, $R/P[g_0]_g=R/P_{g+g_0}$ is finitely generated over $(R/P)_0$, so it is also finitely generated over $R_0$.
\end{proof}
Now we give a characterization of modestness. It is easy to see that for an exact sequence $0 \to M' \to M \to M'' \to 0$, $M$ is modest if and only if $M'$ and $M''$ are both modest; in particular, if $M$ has a filtration, then $M$ is modest if and only if all the factors are modest. Also shifting in degrees does not change modestness.
\begin{theorem}\label{7.7}
Let $R$ be a $G$-graded Noetherian ring and $M$ be a finitely generated $R$-module. Then the following are equivalent:
\begin{enumerate}
\item $M$ is modest.
\item $R/P$ is modest for all $P \in \Min(\ann (M))$.
\item $R/P$ is modest for all $P \in V(\ann (M))$.
\item $R/\ann (M)$ is modest.
\item $(R/\ann (M))_0$ is Artinian.
\end{enumerate}
\end{theorem}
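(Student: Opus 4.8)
The plan is to separate the statement into a purely formal part, the equivalences $(1)\Leftrightarrow(2)\Leftrightarrow(3)\Leftrightarrow(4)$, which only use graded prime filtrations together with the hereditary behaviour of modestness, and a more substantial part $(4)\Leftrightarrow(5)$, which uses \Cref{7.6}. First I would record the stability properties of modestness already noted before the statement: for a short exact sequence of graded modules $0\to M'\to M\to M''\to 0$, $M$ is modest iff $M'$ and $M''$ are, so a module with a finite graded filtration is modest iff all its factors are; modestness is invariant under shifts $M\mapsto M[g]$; and, crucially, if $P\subseteq Q$ are homogeneous primes and $R/P$ is modest, then $R/Q$ is modest, since $(R/Q)_g$ is an $R_0$-quotient of $(R/P)_g$ and hence of finite length. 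Here all primes in sight are homogeneous: $\ann M$ is a homogeneous ideal because $M$ is graded, and minimal primes over a homogeneous ideal are homogeneous in the torsion-free setting by the cited results on graded associated primes.

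Next I would invoke the graded prime filtration of $M$: there is a finite filtration with factors $R/P_i[g_i]$ where the $P_i$ are homogeneous primes with $\Min(\ann M)\subseteq\{P_i\}\subseteq V(\ann M)=\Supp M$. By the stability properties, $M$ is modest iff every $R/P_i$ is modest, which gives $(1)\Leftrightarrow(2)$: if $M$ is modest then every factor, hence every minimal prime quotient, is modest; conversely each $P_i$ contains some $P\in\Min(\ann M)$, so $R/P_i$ is a graded quotient of the modest ring $R/P$, hence modest. The same quotient principle yields $(2)\Leftrightarrow(3)$ (every $P\in V(\ann M)$ dominates a minimal prime $P_0$, so $R/P$ is a quotient of the modest $R/P_0$; the reverse inclusion is trivial) and $(2)\Leftrightarrow(4)$ (for $(4)\Rightarrow(2)$, each $R/P$ with $P$ minimal is a graded quotient of $R/\ann M$; for $(2)\Rightarrow(4)$, run a graded prime filtration of $R/\ann M$, whose factors lie in $V(\ann M)$ and are therefore modest).

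The heart of the argument is $(4)\Leftrightarrow(5)$, where I would pass to $\bar R:=R/\ann M$, a $G$-graded Noetherian ring. For each $g$ the $R_0$-action on $\bar R_g$ factors through $\bar R_0$, because $(\ann M)_0\cdot R_g\subseteq \ann M\cap R_g=(\ann M)_g$, so $l_{R_0}(\bar R_g)=l_{\bar R_0}(\bar R_g)$; thus $(4)$ says exactly that $\bar R$ is modest over itself. For $(4)\Rightarrow(5)$, take $g=0$ to get $l_{\bar R_0}(\bar R_0)<\infty$, and a ring of finite length over itself is Artinian. For $(5)\Rightarrow(4)$, apply \Cref{7.6} to the finitely generated $\bar R$-module $\bar R$ to conclude that each $\bar R_g$ is finitely generated over $\bar R_0$; since $\bar R_0$ is Artinian, a finitely generated module over it has finite length, so $\bar R$ is modest. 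I expect the only real subtlety to be bookkeeping — checking that all relevant ideals and primes are homogeneous so the graded quotients make sense, and the length identification $l_{R_0}(\bar R_g)=l_{\bar R_0}(\bar R_g)$. The genuine mathematical input is \Cref{7.6} (resting on \Cref{7.5}), which I am free to assume; once the reduction to $\bar R$ is in place, the remaining steps are short.
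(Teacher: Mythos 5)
Your proposal is correct and follows essentially the same route as the paper: graded prime filtrations plus the stability of modestness under quotients and extensions handle the equivalences $(1)$--$(4)$, and \Cref{7.6} gives $(5)\Rightarrow(4)$ while degree-$0$ finiteness gives $(4)\Rightarrow(5)$. If anything, your write-up is slightly more careful than the paper's: you make explicit the length identification $l_{R_0}((R/\ann M)_g)=l_{(R/\ann M)_0}((R/\ann M)_g)$, and you state the surjection $R/P_0\twoheadrightarrow R/P$ for $P_0\subseteq P$ in the correct direction, which the paper's proof of $(2)\Rightarrow(3)$ has written backwards.
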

\begin{proof}
(1) implies (2): choose a graded prime filtration of $M$. For every $P \in \Min(\ann (M))$, $R/P[g_P]$ must appear because when we ignore the grading, the factor must appear $l_{R_P}(M_P)>0$ times. So $M$ being modest implies $R/P$ is modest.

(2) implies (3): For $P \in V(\ann (M))$ there exists $Q \in \Min(\ann (M))$ with $Q \subset P$. So $R/P$ surjects onto $R/Q$ and $R/P$ being modest implies $R/Q$ is modest.

(3) implies (1): choose a graded prime filtration of $M$, then any factor has the form $R/P$ where $P \in V(\ann (M))$, so if all the $R/P$'s are modest, then $M$ is modest.

That (4) implies (2) and (3) implies (4) is a particular case of (1) implies (2) and (3) implies (1) since $\ann (R/\ann (M))=\ann (M)$.

(4) implies (5) by definition and (5) implies (4) by \Cref{7.6}. So we are done.
\end{proof}
Let $M$ be a modest $G$-graded module where $G$ is a subgroup of $\mathbb{Q}^d$. We can define the Hilbert function and Hilbert series of $M$. Let $\mathbb{Z}[\mathbb{Q}^d]$ be the set of finite sum $\sum_{t \in \mathbb{Q}^d}a_tz^t$ where $z^t=z_1^{t_1}\ldots z_d^{t_d}$. This is an $\mathbb{Z}[z_1,\ldots,z_d]$-algebra. Let $\mathbb{Z}[[\mathbb{Q}^d]]$ be the set of formal infinite sum $\sum_{t \in \mathbb{Q}^d}a_tz^t$ where $z^t=z_1^{t_1}\ldots z_d^{t_d}$. This is not a ring or a module over $\mathbb{Z}[[z_1,\ldots,z_d]]$. However, it is a module over $\mathbb{Z}[z_1,\ldots,z_d]$. This way of defining the Hilbert series inside a module $\mathbb{Z}[[\mathbb{Q}^d]]$ is a generalization of Definition 1.10 and Definition 8.14 of \cite{SturmfelsCA}. We will write $\mathbb{Z}[z]=\mathbb{Z}[z_1,\ldots,z_d]$ and $\mathbb{Z}[z,z^{-1}]=\mathbb{Z}[z_1,\ldots,z_d,z_1^{-1},\ldots,z_d^{-1}]$ for simplicity.
\begin{definition}
Let $G$ be a subgroup of $\mathbb{Q}^d$, $R$ be a $G$-graded ring, $M$ be a finitely generated $R$-module. Assume $R_0$ is Artinian. The Hilbert function of $M$ is the following function: if $t \in G$, then
$$HF(t)=l_{R_0}(R_t),$$
and if $t \in \mathbb{Q}^d\backslash G$, then $HF(t)=0.$
The Hilbert series is an element in $\mathbb{Z}[[\mathbb{Q}^d]]$ of the form $HS(z)=\sum_{t \in \mathbb{Q}^d}l_{R_0}(M_t)z^t$.
\end{definition}
\begin{definition}
We say an element $a(z) \in \mathbb{Z}[[\mathbb{Q}^d]]$ is summable, if there exists $b(z) \in \mathbb{Z}[z,z^{-1}]$ such that $a(z)b(z)\in\mathbb{Z}[\mathbb{Q}^d]$.  
\end{definition}
The notion of summable series is a generalization of rational series. The $\mathbb{Z}^n$-graded case of summability is defined in Definition 8.39 of \cite{SturmfelsCA}. Actually, if the grading of $R$ is positive, then $a(z)\in\mathbb{Z}[[\mathbb{Q}^d_+]]$ which is a domain, and $a(z)b(z)=c(z)$ implies $a(z)=b(z)/c(z)$ in its fraction field. The point of this generalization comes from the case where the grading of $R$ is not positive. In this case, $\mathbb{Z}[[\mathbb{Q}^d]]$ is not a ring and has $\mathbb{Z}[z,z^{-1}]$-torsion elements. Therefore, we can only write $a(z)\equiv b(z)/c(z)$ and $a(z)$ is not uniquely determined by $b(z),c(z)$. It is proven (\cite{SturmfelsCA}, Theorem 8.41) that if $G=\mathbb{Z}^n$, then finitely generated $G$-graded modules have summable Hilbert series, and we prove that the same thing holds for any $G \subset \mathbb{Q}^d$:
\begin{theorem}\label{7.10}
Let $R$ be a $G$-graded ring, $M$ be a modest $G$-graded module. Then there exists finitely many $v_1,\ldots,v_s \in G$ such that $HS(z)\Pi_i (1-z^{v_i})$ is a Laurant polynomial. That is, the Hilbert series of $M$ is summable.   
\end{theorem}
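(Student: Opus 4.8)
The plan is to reduce the statement to the case of a graded domain by a prime filtration and then to induct on Krull dimension, cutting by a homogeneous element. First I would record the two formal properties that make such a reduction possible. For a short exact sequence of modest modules $0\to M'\to M\to M''\to 0$, each degree gives an exact sequence $0\to M'_g\to M_g\to M''_g\to 0$ of finite-length $R_0$-modules, so $HS_M=HS_{M'}+HS_{M''}$; and a shift satisfies $HS_{M[a]}(z)=z^{-a}HS_M(z)$, i.e.\ multiplication by a monomial, which preserves finiteness of a sum and hence summability. Since modestness is inherited by submodules, quotients, and shifts, I apply the graded prime filtration to $M$: every factor $R/P[g]$ is modest, and by the shift property it suffices to prove that $HS_{R/P}$ is summable for each homogeneous prime $P$. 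Thus I may assume $D:=R/P$ is a $G$-graded Noetherian modest domain.

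Now I induct on $\dim D$ (finite, as $R$ is Noetherian), using the dichotomy that $D$ is a graded field if and only if every nonzero homogeneous element is invertible. If $D$ is a graded field, then $D=\oplus_{g\in\Supp D}D_0x_g$ with each $D_g$ free of rank one over $D_0$, so $HS_D(z)=l_{R_0}(D_0)\sum_{g\in\Supp D}z^g$; when $\Supp D=\{0\}$ this is constant, and otherwise $\Supp D$ is a nontrivial subgroup, so for any $0\neq v\in\Supp D$ the map $g\mapsto g+v$ permutes $\Supp D$ and the telescoping identity $HS_D(z)(1-z^v)=0$ holds, a Laurent polynomial. If $D$ is not a graded field, choose a nonzero homogeneous non-unit $x\in D_g$; as $D$ is a domain, multiplication by $x$ is injective and gives $0\to D[-g]\xrightarrow{\,x\,}D\to D/xD\to 0$, whence $HS_D(z)(1-z^{g})=HS_{D/xD}(z)$. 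Here $D/xD$ is modest with $\dim D/xD<\dim D$, and its prime filtration has factors $D/Q[h]$ with $x\in Q$, so each $\dim D/Q<\dim D$; by the induction hypothesis each $HS_{D/Q}$ is summable, hence so is $HS_{D/xD}$ and therefore $HS_D$.

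To assemble the result for general $M$, I multiply together the finitely many clearing factors produced along all filtration and induction steps, obtaining a single finite product $\prod_i(1-z^{v_i})$ with each $v_i\in\Supp R\subset G$ for which $HS_M(z)\prod_i(1-z^{v_i})$ is a finite sum, i.e.\ a Laurent polynomial in $\Z[\Q^d]$. To match the definition of summability, which demands $b(z)\in\Z[z,z^{-1}]$ with integer exponents, I arrange every cutting element to have integer-coordinate degree: replacing $x$ by a power $x^n$ with $n\deg(x)\in\Z^d$ leaves the support of $D/x^nD$ (hence its dimension) unchanged, and replacing $v$ by $nv$ in the graded field case leaves the telescoping vanishing intact, since $nv$ still lies in the relevant subgroup. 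Thus the $v_i$ can be taken in $G\cap\Z^d$, so $\prod_i(1-z^{v_i})\in\Z[z,z^{-1}]$ and $HS_M$ is summable.

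The main obstacle is the graded field case: there a homogeneous non-unit that is a nonzerodivisor simply does not exist, so the usual ``cut by a nonzerodivisor to drop the dimension'' step stalls. What rescues the argument is that the Hilbert series of a graded field is a scaled bi-infinite sum over a group, and such a sum is annihilated by $1-z^v$ for any nonzero $v$ in that group. This is exactly the source of the $\Z[z,z^{-1}]$-torsion in $\Z[[\Q^d]]$, and it explains why the correct formulation of the conclusion is summability (an identity up to torsion) rather than an honest rational expression for $HS_M$.
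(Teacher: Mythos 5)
Your reduction to a modest Noetherian graded domain $D=R/P$ and the dichotomy (graded field versus not) are sound, but the induction itself has a genuine gap: you induct on $\dim D$ and justify its finiteness with ``finite, as $R$ is Noetherian.'' That implication is false in general --- Noetherian rings can have infinite Krull dimension (Nagata's example) --- and nothing in the hypotheses of \Cref{7.10} bounds $\dim R$. Finiteness of $\dim D$ does hold for the modest domains you reach, but it is a consequence of structure theory you never invoke: for instance, by \Cref{7.5}, $D$ is a finitely generated algebra over $D_{G'}$ with $G'=\Supp(D)\cap(-\Supp(D))$, and $D_{G'}$ is a graded field, hence integral over a Laurent polynomial subring, so $\dim D<\infty$. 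The cleanest repair avoids dimension theory altogether: run Noetherian induction on the homogeneous prime. If the statement fails, choose $P$ maximal among homogeneous primes for which $R/P$ is modest with non-summable Hilbert series; in your cutting step every graded prime filtration factor of $D/xD$ has the form $R/Q[h]$ with $Q\supsetneq P$ (since $0\neq x\in D$), hence is summable by maximality of $P$, and your telescoping identity $HS_D(z)(1-z^{\deg x})=HS_{D/xD}(z)$ then gives the contradiction. With that one substitution your proof is complete.

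Beyond that, your route genuinely differs from the paper's. Both proofs make the same first reduction (prime filtration, additivity, shift-invariance). The paper then splits on whether $G'=\Supp(D)\cap(-\Supp(D))$ is nonzero: if $G'\neq 0$, a unit of nonzero degree $g'$ gives $HS(z)(1-z^{g'})=0$ --- the same telescoping you use, but triggered by a single nonzero-degree unit rather than requiring all of $D$ to be a graded field; if $G'=0$, the support is pointed, \Cref{7.5} makes $D$ a finitely generated $R_0$-algebra graded by a finitely generated free abelian group, and the paper quotes the known $\Z^n$-graded summability theorem (\cite{SturmfelsCA}, Theorem 8.41). Your cut-by-a-homogeneous-non-unit induction re-proves that $\Z^n$-graded input from scratch, so it is more self-contained; the paper's proof buys termination for free by outsourcing that case. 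Finally, your last paragraph is a point where you are more careful than the paper's own wording: Definition 7.9 asks for a clearing factor in $\Z[z,z^{-1}]$, while the theorem produces $v_i\in G\subset\Q^d$, and your observation that $1-z^{nv}=(1-z^v)(1+z^v+\cdots+z^{(n-1)v})$ lets one replace each $v_i$ by $n_iv_i\in\Z^d$, closing a discrepancy that the paper leaves implicit.
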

\begin{proof}
Since $G$ is torsion-free, we can take a graded prime filtration of finite length of $M$. Since Hilbert function is additive on short exact sequences, we may reduce to the case that $M=R/P,P \in V(\ann (M))$ is modest. We may replace $R/P$ by $R$ again to assume $M=R$ is a domain because for $R/P$-modules, the $R/P$-module length is equal to the $R$-module length. In this case, $R_0$ has finite length and is a domain, so it is a field. Denote $G'=\Supp(R)\cap (-\Supp(R))$ as in \Cref{7.5}. If $G' \neq 0$, then take $g \in G'$ and $0 \neq a \in R_g', 0 \neq b \in R_{-g'}$. Then $0 \neq ab \in R_0$, so $a$, $b$ are units. So multiplication by $a$ induces an isomorphism of $R_0$-vector spaces $R_g \to R_{g+g'}$, so $HS(z)(1-z^{g'})=0$. If $G'=0$, then $R=R_0[x_1,\ldots,x_s]$ where $x_i$'s are all homogeneous. So $R$ is graded over a finite free abelian group, and we can apply the result in $\mathbb{Z}^n$-graded case to prove $HS(z)\Pi_i (1-z^{v_i})$ is a Laurant polynomial for some $v_i \in \Supp(R)$.  
\end{proof}
\begin{remark}
At the end of \cite{SturmfelsCA}, 8.4, there is a question asking for a generalization of Hilbert series
for an immodest graded module $M$. One possible generalization for Hilbert series would be the Hilbert-Samuel function with respect to some ideal. Let $I$ be a homogeneous ideal such that $(R/I+\ann (M))_0$ is Artinian, then for any $n$, $M/I^nM$ and $I^nM/I^{n+1}M$ are modest modules. The double series
$$\sum_{n \geq 0}HS_{I^nM/I^{n+1}M}(z)w^n$$
is summable because it is just the Hilbert series of the $G\times\mathbb{Z}$-graded module $gr_I(M)$. Therefore, 
$$\sum_{n \geq 0}HS_{M/I^{n+1}M}(z)w^n=\sum_{n \geq 0}HS_{I^nM/I^{n+1}M}(z)w^n(1+w+w^2+\ldots)$$
is also summable.
\end{remark}

\section*{Acknowledgement}
This material is based upon work supported by the National Science Foundation under Grant No. DMS-1928930 and by the Alfred P. Sloan Foundation under grant G-2021-16778, while the author was in residence at the Simons Laufer Mathematical Sciences Institute (formerly MSRI) in Berkeley, California, during the Spring 2024 semester. The author would like to thank Linquan Ma for reading an early draft.
\bibliographystyle{plain}
\bibliography{refQNoe}
\end{document}